\DeclareMathAlphabet{\mathpzc}{OT1}{pzc}{m}{it}
\newtheorem{theorem}{Theorem}[section]
\newtheorem{theorem-definition}[theorem]{Theorem-Definition}
\newtheorem{lemma-definition}[theorem]{Lemma-Definition}
\newtheorem{definition-prop}[theorem]{Proposition-Definition}
\newtheorem{corollary}[theorem]{Corollary}
\newtheorem{prop}[theorem]{Proposition}
\newtheorem{lemma}[theorem]{Lemma}
\newtheorem{cor}[theorem]{Corollary}
\newtheorem{definition}[theorem]{Definition}
\newenvironment{remark}{\vspace{4pt}\noindent\textbf{Remark.}}{\qed\vspace{4pt}}
\newcommand{\N}{\ensuremath{\mathbb{N}}}
\newcommand{\Z}{\ensuremath{\mathbb{Z}}}
\newcommand{\Q}{\ensuremath{\mathbb{Q}}}
\newcommand{\C}{\ensuremath{\mathbb{C}}}
\newcommand{\A}{\ensuremath{\mathbb{A}}}
\renewcommand{\C}{\ensuremath{\mathbb{C}}}
\renewcommand{\A}{\ensuremath{\mathbb{A}}}
\newcommand{\Spec}{\ensuremath{\mathrm{Spec}\,}}
\newcommand{\Spf}{\ensuremath{\mathrm{Spf}\,}}
\numberwithin{equation}{section}
\begin{document}
\title{Greenberg approximation and the geometry of arc spaces}
\author[Johannes Nicaise]{Johannes Nicaise}
\address{Johannes Nicaise\\  Universit\'e Lille 1\\
Laboratoire Painlev\'e, CNRS - UMR 8524\\ Cit\'e Scientifique\\59655 Villeneuve d'Ascq C\'edex \\
France} \email{johannes.nicaise@math.univ-lille1.fr}
\author{Julien Sebag}
\address{Julien Sebag\\ Universit\'e Bordeaux 1,
IMB, Laboratoire A2X \\ 351 cours de la lib\'eration\\ 33405
Talence cedex, France} \email{julien.sebag@math.u-bordeaux1.fr}
\begin{abstract}
We study the differential properties of generalized arc schemes
and geometric versions of Kolchin's Irreducibility Theorem over
arbitrary base fields. As an intermediate step, we prove an
approximation result for arcs by algebraic curves.
\end{abstract}
\maketitle
\section{Introduction}
%We study Kolchin's Irreducibility Theorem in positive
%characteristic. More precisely, we show an analog of Kolchin's
%Irreducibility Theorem over an arbitrary field $k$ (Theorem
%\ref{gen case}).
%(\textit{\textit{i.e.} } for $k$ of \textit{any}
%characteristic, and \textit{without} assuming that $k$ is
%perfect).
%This result  perfectly explains the previously found positive
%characteristic counter-example of \cite{NiSe} and generalizes the
%characteristic result of \cite{NiSe} (without using Hironaka's
%resolution of singularities).

In this article, we study geometric, topological and differential
properties of \textit{generalized arc schemes}.

\bigskip

One of the questions we deal with is their irreducibility. Let $k$
be a field. In differential algebra, \textit{Kolchin's
Irreducibility Theorem} states that, for any prime ideal $I$ in an
algebra of finite type over a field $k$ of characteristic zero,
the radical differential ideal $\{I\}$ associated to $I$ is again
prime \cite[Prop.~10,~p. 200]{kolchin}.

In \cite[3.3]{NiSe1}, we observed that the arc space of a
$k$-scheme of finite type $X$ can be constructed in terms of
differential algebra. Roughly speaking, the arc space associated
to $X$ is a $k$-scheme $\mathcal{L}(X)$ which parametrizes
$k[[t]]$-points on $X$. It contains deep information on the
structure of the singularities of $X$ and plays a fundamental role
in the theory of motivic integration. In this setting, Kolchin's Irreducibility Theorem states that
$\mathcal{L}(X)$ is irreducible if $X$ is. We'll refer to this
result as the \textit{Arc Scheme Irreducibility Theorem}. We gave
a purely geometric proof of this theorem, using resolution of
singularities, and a counterexample to show that it does not
extend to positive characteristic \cite[Rmq.\,1]{NiSe1}.

More recently, in \cite[2.9]{Reguera}, Reguera established a
modified form of the Arc Scheme Irreducibility Theorem, when $k$
is a \textit{perfect} field of positive characteristic.
 In the present article, we give a counterexample to show that her result fails
 if $k$ is imperfect (Theorem \ref{count}), and we show how it can be adapted
 for arbitrary fields (Theorem \ref{gen case}). Our result states that, for any field $k$ and any
  $k$-scheme of
finite type $X$, there exists a natural bijective correspondence
between the set of geometrically reduced irreducible components of
$X$, and the set of irreducible components of
$\mathcal{L}(X)\setminus \mathcal{L}(nSm(X))$. Here $nSm(X)$
denotes the non-smooth locus of $X$ over $k$. The results
presented in this article
 clarify the status of the Arc Scheme Irreducibility Theorem over
 arbitrary fields and incorporate all previously known cases.

\bigskip

As an intermediate result of independent interest, we prove that
points on $\mathcal{L}(X)$ can be approximated by algebraic curves
$C$ on $X$, in a sense which is made precise in Theorem
\ref{approx}. Since the topology of $\mathcal{L}(C)$ is easier to
control (Lemma \ref{curve}) one can use Theorem \ref{approx} to
study the topology of $\mathcal{L}(X)$ (see for instance
Proposition \ref{conn}).
 Our approximation result is based on the following
fundamental theorem by Greenberg \cite[Thm.\,1]{Gr} (we state it
in a slightly different, but equivalent form).

\begin{theorem}[Greenberg's Approximation Theorem]\label{grapp}
Let $R$ be an excellent henselian discrete valuation ring with
uniformizer $\pi$. For any $R$-scheme of finite type $X$ there
exists an integer $a\geq 1$ such that for any integer $\nu\geq 1$
the images of the natural maps
$$X(R/(\pi^{a\nu}))\rightarrow X(R/(\pi^\nu))\mbox{ and }X(R)\rightarrow X(R/(\pi^\nu))$$
coincide.
\end{theorem}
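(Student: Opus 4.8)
Since the statement is Greenberg's theorem, ``proving'' it means recalling his argument, whose shape I sketch. One inclusion is free: for $s\in X(R)$ the reduction of $s$ modulo $\pi^{a\nu}$ lies in $X(R/(\pi^{a\nu}))$ and further reduces to $s\bmod\pi^\nu$, so the image of $X(R)\to X(R/(\pi^\nu))$ is contained in that of $X(R/(\pi^{a\nu}))\to X(R/(\pi^\nu))$ as soon as $a\nu\ge\nu$; the content is the reverse inclusion, i.e.\ that every $R/(\pi^{a\nu})$-point of $X$ is congruent modulo $\pi^\nu$ to an $R$-point. Since $\Spec R$ and $\Spec R/(\pi^N)$ are local, a morphism from either of them to $X$ factors through a common affine open, so we may take $X=\Spec A$ with $A=R[T_1,\dots,T_n]/(f_1,\dots,f_q)$, so that $X(S)=\{s\in S^n : f(s)=0\}$ functorially in the $R$-algebra $S$. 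Using that $A$ is noetherian — its nilradical is nilpotent and its $\pi$-torsion is bounded — an elementary congruence argument reduces the problem for $X$ to the same problem for $X_{\mathrm{red}}$ and then for its $\pi$-torsion-free quotient, so we may assume $X$ reduced and flat over $R$.

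Fix a presentation as above, let $H\subseteq A$ be its Jacobian ideal (generated by the maximal minors of $(\partial f_i/\partial T_j)$), and let $Z=V(H)$, a closed subscheme containing the locus where $X\to\Spec R$ is not smooth. When $X$ is smooth over $R$ the henselian hypothesis makes $X(R)\to X(R/(\pi^\nu))$ surjective for every $\nu$, so $a=1$ works; this is the base case. In general the engine is a Newton-type estimate (Tougeron's lemma): if $s_0\in R^n$ satisfies $f(s_0)\equiv 0\pmod{\pi^{N}}$ with $N\ge 2c+1$, where $c=\mathrm{ord}_\pi H(s_0)$, then there is $s\in R^n$ with $f(s)=0$ and $s\equiv s_0\pmod{\pi^{N-c}}$; thus a point where the Jacobian degeneracy is small relative to the order of vanishing of $f$ refines to an exact solution with only a \emph{linear} loss of precision. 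The remaining $R/(\pi^{a\nu})$-points lie $\pi$-adically close to $Z$, and one disposes of them by a noetherian induction on $\dim X$: excellence of $R$ ensures that $Z$ is proper closed and, after replacing $X$ by the components of its (finite) normalization — a step absorbed into the induction — that $\dim Z<\dim X$, so the inductive hypothesis applies to $Z$; and a {\L}ojasiewicz-type inequality, coming from the Artin--Rees lemma applied to $H$ relative to the ideal $I_Z$ of $Z$, furnishes a constant $e$ with $\mathrm{ord}_\pi H(s_0)\le e\cdot\mathrm{ord}_\pi I_Z(s_0)$ for $s_0\in X(R)$. Combining this inequality with Tougeron's estimate for points moderately far from $Z$, and with the inductive bound on $Z$ for points so close to $Z$ as to be indistinguishable from $Z$-points at the precision demanded, one assembles a single affine-linear Greenberg bound $a\nu+b$ for $X$.

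The step I expect to be the main obstacle is precisely this gluing: extracting a \emph{linear} rather than merely finite bound forces one to control how the constants $e$, the threshold $2c+1$, and the inductive constants accumulate through the dimension induction, and one must separately verify that the induction terminates — i.e.\ that the ``vertical'' and possibly inseparable part of $X$, where the Jacobian-based argument degenerates, still forces a strict drop in dimension — which is where excellence is indispensable. Granting the affine-linear bound $a\nu+b$ (which is Greenberg's Theorem 1 in its standard form), the statement follows by absorbing the constant: replacing $a$ by $a+b$ we have $a\nu+b\le(a+b)\nu$ for every $\nu\ge 1$, so every $R/(\pi^{(a+b)\nu})$-point of $X$ reduces modulo $\pi^\nu$ to the reduction of an $R$-point, which together with the free inclusion gives the asserted equality of images.
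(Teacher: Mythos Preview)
The paper does not prove this statement: Theorem~\ref{grapp} is quoted from Greenberg \cite[Thm.\,1]{Gr} and used as a black box in Lemma~\ref{zar-dense} and Theorem~\ref{approx}. There is therefore no argument in the paper to compare your proposal against.

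As a sketch of Greenberg's own proof your outline is accurate in shape --- reduction to the affine, reduced, $R$-flat case; the Newton--Tougeron lifting estimate with loss of precision governed by the $\pi$-adic order of the Jacobian ideal; a noetherian induction along the non-smooth locus, whose good behaviour (properness of $Z\subsetneq X$, finiteness of normalization, and the resulting strict drop in dimension) is supplied by excellence; and finally the absorption of the additive constant in Greenberg's affine-linear bound $a\nu+b$ into the slope to obtain the purely multiplicative form stated here. You are right that the delicate point is the bookkeeping of constants through the dimension induction; that is where the work in \cite{Gr} lies, and your proposal, being a sketch, does not carry it out. For the purposes of this paper, however, nothing more than the statement is required.
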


\bigskip

Besides irreducibility, it is natural to ask which other
properties of schemes are preserved by the arc space functor
(reducedness, noetherianity, connectedness,\ldots). In Section
\ref{sec-arc} we obtain some results in this direction, placing
ourselves in the most general setting: arc schemes
$\mathcal{L}(X/S)$ of arbitrary morphisms of schemes $X\rightarrow
S$. We call these objects \textit{generalized
  arc schemes}. Working on this level of generality is
useful, for instance, if one wants to consider the \textit{scheme
of wedges} of a relative scheme $X\rightarrow S$ as the iterated
arc scheme $\mathcal{L}(\mathcal{L}(X/S)/S)$ of $X/S$,
\textit{i.e.} if one studies infinitesimal deformations of arcs
(Definition \ref{wedge}). The wedge scheme plays an important role
in the study of the Nash problem \cite[5.1]{reguera-curve}.
Moreover, extending the theory to arbitrary relative schemes
yields a very natural proof of the Arc Scheme Irreducibility
Theorem for arbitrary schemes over a field of characteristic zero
(Theorem \ref{Qkol}). Our proof does not use resolution of
singularities and relies on the interpretation of arc schemes in
terms of differential algebra (Corollary \ref{arc-diff}).

 We show that
the generalized arc schemes have many properties with a
differential flavor, even in positive characteristic.
%Their
%differential nature is nicely explained by Vojta's construction in
%terms of Hasse-Schmidt derivations \cite{vojta}.
% We do not use
%this interpretation, but rely on the properties of the Weil
%restriction functor instead, and we prove complementary results.
 Our main theorem in this direction is the characterization of
formally unramified morphisms (Theorem \ref{diff}(1)). These
differential properties are then applied in our study of the
geometry of arc schemes.

\bigskip

To conclude this introduction, we give a survey of the structure
of the paper. Section \ref{sec-arc} studies the differential
properties of arc schemes. We recall the general construction of
arc schemes in Section \ref{sec-arc-def}, we establish some basic
properties in Section \ref{stable}, and we develop the relations
between the geometry of arc schemes and the differential
properties of morphisms of schemes in Section \ref{sec-arc-diff}.

In Section \ref{sec-top} we focus on the topological properties of
arc schemes, relying on the results we proved in Section
\ref{sec-arc}. Section \ref{sec-top-diff} contains some
preliminaries. In Section \ref{sec-top-arcdiff} we interprete the
arc scheme in terms of differential algebra, and we use this
interpretation in Section \ref{sec-top-Qkol} to give a short
geometric proof of the Arc Scheme Irreducibility Theorem for
arbitrary schemes over a field of characteristic zero. Section
\ref{sec-top-wedge} gives an application of this result to wedge
schemes. Next, we prove our approximation result for arcs by
algebraic curves in Section \ref{sec-curves}. This result is used
in the topological study of the arc scheme in Section
\ref{sec-decomp}, where we prove various forms of the Arc Scheme
Irreducibility Theorem over arbitrary base fields. At the end of
the section, we show that there exists, over any imperfect field
$k$, a regular irreducible $k$-variety $X$ such that
$\mathcal{L}(X)$ is not irreducible. This shows that the statement
of \cite[2.9]{Reguera} does not extend to imperfect fields.

\subsection*{Notation}

For any field $k$, a $k$-variety is a reduced separated $k$-scheme
of finite type. A $k$-curve is a $k$-scheme of finite type of pure
dimension one. We do not demand it to be separated, nor reduced.

 We denote by $(\cdot)_{red}$ the endofunctor on the category
of schemes mapping a scheme $S$ to its maximal reduced closed
subscheme $S_{red}$. For any scheme $S$, an $S$-algebra (resp.
$S$-field) is a ring (resp. field) $A$ together with a morphism of
schemes $\Spec A\rightarrow S$.

For any field $k$ and any $k$-scheme $S$, we denote by $S_{alg}$
the set of points on $S$ whose residue field is algebraic over
$k$. Any morphism of $k$-schemes $T\rightarrow S$ maps $T_{alg}$
to $S_{alg}$, so $(\cdot)_{alg}$ defines a functor from the
category of $k$-schemes to the category of sets. If $k^{alg}$ is
an algebraic closure of $k$, then $S_{alg}$ is the image of the
natural map $S(k^{alg})\rightarrow S$. If $S$ is of finite type
over $k$, then $S_{alg}$ coincides with the set of closed points
of $S$.

If $X$ is a $k$-scheme of finite type over $k$, we denote by
$Reg(X)$ the set of regular points of $X$, and by $Sm(X)$ the set
of points where the structural morphism $X\rightarrow \Spec k$ is
smooth. These are open subsets of $X$ and we endow them with the
induced scheme structure. The complements of $Reg(X)$ and $Sm(X)$
in $X$, with their reduced induced closed subscheme structure, are
denoted by $Sing(X)$ (the singular locus of $X$), resp. $nSm(X)$
(the non-smooth locus of $X$). We say that $X$ has isolated
singularities if $Sing(X)$ is a finite set of points.

For any scheme $S$ and  any integer $n\geq 0$, we put
$S_n=S\times_{\Z} \Z[t]/(t^{n+1})$.
%%%%%%%%%%%%%%%%%%%%%%%%%%%%%%%%%%%%
\section{Arc spaces}\label{sec-arc}
%%%%%%%%%%%%%%%%%%%%%%%%%%%%%%%%%%%

\subsection{Definition}\label{sec-arc-def}
We recall the definition of the arc scheme functor, for arbitrary
relative schemes.

\vspace{5pt} \noindent $\ast$  Let $S$ be any scheme, and $X$ any
$S$-scheme. By \cite[7.6.4]{neron}, the Weil restriction
$$\prod_{S_n/S}(X\times_S S_n)$$ is representable by a $S$-scheme, which we denote by
$\mathcal{L}_n(X/S)$. To be precise, the conditions in the
statement of \cite[7.6.4]{neron} are not necessarily fulfilled by
the $S_n$-scheme $X\times_S S_n$; however, going through the
proof, one sees that one only has to verify that for any geometric
point $z$ of $S$, the image of any morphism of schemes
$z\times_{S}S_n\rightarrow X\times_{S} S_n$ is contained in an
affine open subscheme of $X\times_{S} S_n$. This is trivial, since
 $z\times_{S}S_n$ is a point. Observe that $\mathcal{L}_0(X/S)$ is
 canonically isomorphic to $X$, and that $\mathcal{L}_n(S/S)$ is canonically isomorphic to $S$ for all $n\geq
 0$. If $S=\Spec A$, we also write
 $\mathcal{L}_n(X/A)$ instead of $\mathcal{L}_n(X/S)$.

%If $S=\Spec
%\Z$, we'll simply write $\mathcal{L}(X)$.

\vspace{5pt} \noindent $\ast$  By functoriality of the Weil
restriction, $\mathcal{L}_n(\cdot/S)$ defines an endofunctor on
the category of $S$-schemes. By the proof of \cite[7.6.4]{neron},
$\mathcal{L}_n(X/S)$ is affine if $X$ and $S$ are affine. By
\cite[7.6.2]{neron}, the functor $\mathcal{L}_n(\cdot/S)$ respects
open, resp. closed immersions. By \cite[7.6.5]{neron},
$\mathcal{L}_n(X/S)$ is separated, resp. of finite presentation,
resp. smooth over $S$, if the same holds for $X$.

\vspace{5pt} \noindent $\ast$  For $m\geq n$ the closed immersion
$S_n\rightarrow S_m$ defined by reduction modulo $t^{n+1}$ induces
a natural morphism of schemes
$\pi^m_n:\mathcal{L}_m(X/S)\rightarrow \mathcal{L}_n(X/S)$. The
morphisms $\pi^m_n$ are affine, so that the projective limit
$$\mathcal{L}(X/S)=\lim_{\stackrel{\longleftarrow}{n}}\mathcal{L}_n(X/S)$$
exists in the category of schemes. We denote the natural
projection morphisms by $\pi_n:\mathcal{L}(X/S)\rightarrow
\mathcal{L}_n(X/S)$.

\begin{definition}
The scheme $\mathcal{L}_n(X/S)$ is called the $n$-jet scheme of
$X/S$, and $\mathcal{L}(X/S)$ is called the arc scheme of $X/S$.
The morphisms $\pi_n$ and $\pi^m_n$ are called truncation
morphisms.
%It is not Noetherian, in general, even if $S$
%is the spectrum of a field and $X$ is of finite type over $S$.
\end{definition}

By the canonical isomorphism $\mathcal{L}_0(X/S)\cong X$, the
truncation morphisms $\pi^n_0$ and $\pi_0$ endow
$\mathcal{L}_n(X/S)$ and $\mathcal{L}(X/S)$ with a natural
structure of $X$-scheme. To uniformize notation, we will often put
$\mathcal{L}_{\infty}(\cdot)=\mathcal{L}(\cdot)$ and
$\pi^{\infty}_m=\pi_m$. We extend the usual ordering on $\N$ to
$\N\cup\{\infty\}$ by imposing that $\infty\geq n$ for all $n$ in
$\N\cup\{\infty\}$.

\vspace{5pt} \noindent $\ast$  It follows immediately from the
definition that for any morphism of schemes $X\rightarrow S$, the
arc scheme $\mathcal{L}(X/S)$ represents the functor from the
category of $S$-algebras to the category of sets sending a
$S$-algebra $A$ to the set $Hom_S(\Spf A[[t]], X)$. If $X$ is
affine, then the completion map
\begin{equation}\label{form-alg}
Hom_S(\Spec A[[t]], X)\rightarrow Hom_S(\Spf A[[t]],
X)\end{equation} is bijective and $\mathcal{L}(X/S)$ represents
the functor $A\mapsto X(A[[t]])$. It is an interesting open
question whether this property extends to all schemes $X$ of
finite type over $S=\Spec k$ with $k$ a field; this is the case
iff the functor $A\mapsto X(A[[t]])$ is a sheaf for the Zariski
topology on the category of $k$-algebras.

If $X\rightarrow S$ is any morphism of schemes and $A$ is a
\textit{local} $S$-algebra, then (\ref{form-alg}) is still a
bijection since any morphism $\Spec A[[t]]\rightarrow X$ factors
through an affine open subscheme of $X$. Hence, there exists a
natural bijection $\mathcal{L}(X/S)(A)=X(A[[t]])$. In particular,
for any $S$-field $F$, we have $\mathcal{L}(X/S)(F)= X(F[[t]])$.

\vspace{5pt} \noindent $\ast$  If $h:Y\rightarrow X$ is a morphism
of $S$-schemes, then the natural morphisms of $S$-schemes
$\mathcal{L}_n(h/S):\mathcal{L}_n(Y/S)\rightarrow
\mathcal{L}_n(X/S)$ commute with the truncation morphisms and
define a natural morphism of $S$-schemes
$\mathcal{L}(h/S):\mathcal{L}(Y/S)\rightarrow \mathcal{L}(X/S)$ by
passing to the limit, so $\mathcal{L}(\cdot/S)$ defines an
endofunctor on the category of $S$-schemes. If $h$ is a closed
immersion, then so is $\mathcal{L}(h/S)$, since it is a projective
limit of closed immersions $\mathcal{L}_n(h/S)$. It is easily seen
that $\mathcal{L}(\cdot/S)$ also respects open immersions; see
Theorem \ref{diff}(3) for a more general statement.
%Note that there is a certain ambiguity in the
%notation: the morphism $\mathcal{L}(h)$ is \textit{not} the same
%object as the relative scheme $\mathcal{L}(Y/X)$.

\vspace{5pt} \noindent $\ast$  The tautological morphism
$$X\rightarrow \prod_{S_n/S}(X\times_S S_n)$$ defines a section
$\tau^n_{X/S}:X\rightarrow \mathcal{L}_n(X/S)$ for the projection
morphism $\pi^n_0:\mathcal{L}_n(X/S)\rightarrow X$ for each $n\geq
0$, and by passing to the limit, we get a section
$\tau_{X/S}:X\rightarrow \mathcal{L}(X/S)$ for
$\pi_0:\mathcal{L}(X/S)\rightarrow X$ which sends a point $x$ of
$X$ to the constant arc at $x$.
% \textit{i.e.}  to the image of the natural morphism
%$$x\cong\mathcal{L}(x/x)\rightarrow \mathcal{L}(X/S)$$
The truncation morphisms $\pi^n_0$ and $\pi_0$ are affine, hence
separated, which implies that the sections $\tau^n_{X/S}$ and
$\tau_{X/S}$ are closed immersions.

\vspace{5pt} \noindent $\ast$  A morphism of schemes $T\rightarrow
S$ induces a natural base change morphism
%of $X\times_S T$-schemes
$$\mathcal{L}_n(X/S)\times_S T\rightarrow\mathcal{L}_n(X\times_S
T/T)$$ for any $S$-scheme $X$, and a natural forgetful morphism
$$\mathcal{L}_n(Y/T)\rightarrow \mathcal{L}_n(Y/S)$$ %of $Y$-schemes
for any $T$-scheme $Y$. These morphisms are compatible with the
truncation morphisms $\pi^m_n$. Taking limits, we get similar
morphisms on the level of arc spaces. The forgetful morphism
$$T\cong \mathcal{L}_n(T/T)\rightarrow \mathcal{L}_n(T/S)$$
coincides with $\tau^n_{T/S}$ for $n\in \N$ and with $\tau_{T/S}$
for $n=\infty$. In particular, it is a closed immersion.

\vspace{5pt} \noindent $\ast$  We'll consider two natural
topologies on $\mathcal{L}(X/S)$. The \textit{Zariski topology} on
the scheme $\mathcal{L}(X/S)$ coincides with the limit topology
w.r.t. the Zariski topology on the schemes $\mathcal{L}_n(X/S)$,
by \cite[8.2.9]{ega4.3}. Besides, we introduce the following
definition.

\begin{definition}[$t$-adic topology]
The \textit{$t$-adic topology} on $\mathcal{L}(X/S)$ is the limit
topology on $\mathcal{L}(X/S)$ w.r.t. the discrete topology on the
schemes $\mathcal{L}_n(X/S)$.
\end{definition}

\subsection{Basic properties}
\label{stable}

We establish some fundamental properties of the arc scheme.
\begin{prop}\label{basic}
Let $S$ be any scheme, let $X,Y,Z,T$ be schemes over $S$, and let
$W\rightarrow V$ be a morphism of $T$-schemes. Fix a value $n\in
\N\cup\{\infty\}$.
\begin{enumerate}%\renewcommand{\theenumi}{(\arabic{enumi})}
\item The functor $\mathcal{L}_n(\cdot/S)$ commutes with base
change: the natural base change morphism
$$\mathcal{L}_n(X/S)\times_S T\rightarrow\mathcal{L}(X\times_S T/T)$$ is an isomorphism.
 \item The functor $\mathcal{L}(\cdot/S)$
commutes with fibered products: for any $S$-morphisms
$X\rightarrow Z$ and $Y\rightarrow Z$, there is a natural
isomorphism
$$\mathcal{L}_n(X\times_Z Y/S)\cong
\mathcal{L}_n(X/S)\times_{\mathcal{L}_n(Z/S)}\mathcal{L}_n(Y/S)$$
\item There is a natural isomorphism
$$\mathcal{L}_n(W/T)\cong
\mathcal{L}_n(W/S)\times_{\mathcal{L}_n(V/S)}\mathcal{L}_n(V/T)$$
 \item The natural forgetful morphism
$$\mathcal{L}_n(W/T)\rightarrow \mathcal{L}_n(W/S)$$ is a closed immersion.
\item The natural morphism $X_{red}\rightarrow X$ induces an
isomorphism $\mathcal{L}(X_{red}/S)_{red}\rightarrow
\mathcal{L}(X/S)_{red}$. In particular, the natural morphism
$\mathcal{L}(X_{red}/S)\rightarrow \mathcal{L}(X/S)$ is a
homeomorphism. \item If $\mathcal{L}(X/S)$ is reduced, then $X$ is
reduced.

%\item If $h:X\rightarrow Y$ is surjective, formally smooth
%morphism of $S$-schemes, then $$\mathcal{L}_n(X/S)@>>>
%\mathcal{L}_n(Y/S)$$ is surjective.

 \item
If the truncation morphism $\pi^n_0:\mathcal{L}_n(T/S)\rightarrow
T$ is an isomorphism then the forgetful morphism
$\mathcal{L}_n(W/T)\rightarrow \mathcal{L}_n(W/S)$ is an
isomorphism.
 Likewise, if
$(\pi^n_0)_{red}:\mathcal{L}_n(T/S)_{red}\rightarrow T_{red}$ is
an isomorphism then $\mathcal{L}_n(W/T)_{red}\rightarrow
\mathcal{L}_n(W/S)_{red}$ is an isomorphism.
\end{enumerate}
\end{prop}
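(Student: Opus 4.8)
The plan is to reduce the finite-level statements (1)--(3) to a single functor-of-points computation, to derive (4) and (7) by specializing (3), to prove (5) by a direct surjectivity argument, to bootstrap (6) from (5), and to dispatch the case $n=\infty$ throughout by passing to the projective limit. The starting point is the observation that, for every $S$-algebra $A$ and every $m\in\N$, the defining property of the Weil restriction yields a functorial bijection $\mathcal{L}_m(X/S)(A)=\mathrm{Hom}_S(\Spec A[t]/(t^{m+1}),X)$. Since a scheme is determined by its functor of points on $S$-algebras, (1)--(3) at a finite level $n$ become elementary: for (1), if $A$ is a $T$-algebra then so is $A[t]/(t^{n+1})$, and an $S$-morphism $\Spec A[t]/(t^{n+1})\to X$ is the same as a $T$-morphism to $X\times_S T$; for (2), a morphism to $X\times_Z Y$ is a compatible pair of morphisms to $X$ and to $Y$; for (3), an $S$-morphism $\Spec A[t]/(t^{n+1})\to W$ whose composite with $W\to V$ is a $T$-morphism is automatically a $T$-morphism, because the structural morphism $W\to T$ factors through $V$. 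For $n=\infty$ I would pass to the limit: the truncation morphisms $\pi^m_n$ are affine, so the projective limits exist in the category of schemes, and a morphism into a limit of fibered products is the same datum as a compatible family of morphisms, i.e.\ a morphism into the fibered product of the limits; equivalently, the finite-level statements are genuine scheme isomorphisms whose limit over $n$ is the desired one.

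Statements (4) and (7) then follow by applying (3) to the structural morphism $W\to T$, i.e.\ by taking $V=T$. Using the canonical isomorphism $\mathcal{L}_n(T/T)\cong T$, this gives an isomorphism $\mathcal{L}_n(W/T)\cong\mathcal{L}_n(W/S)\times_{\mathcal{L}_n(T/S)}T$ under which the forgetful morphism becomes the first projection, hence the base change of the morphism $T\to\mathcal{L}_n(T/S)$ along $\mathcal{L}_n(W/S)\to\mathcal{L}_n(T/S)$. That morphism is the section $\tau^n_{T/S}$ (resp.\ $\tau_{T/S}$ for $n=\infty$), which is a closed immersion, and closed immersions are stable under base change; this gives (4). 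For (7), if $\pi^n_0$ is an isomorphism then its section $\tau^n_{T/S}$ is the inverse isomorphism, and an isomorphism remains one after base change. For the reduced variant, applying $(\cdot)_{red}$ to the identity $\pi^n_0\circ\tau^n_{T/S}=\mathrm{id}$ shows that $(\tau^n_{T/S})_{red}$ is an isomorphism once $(\pi^n_0)_{red}$ is, so $\tau^n_{T/S}$ is surjective on underlying spaces; its base change, the forgetful morphism, is therefore a surjective closed immersion, which induces an isomorphism on reductions.

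For (5), the closed immersion $i\colon X_{red}\to X$ induces a closed immersion $\mathcal{L}(i/S)\colon\mathcal{L}(X_{red}/S)\to\mathcal{L}(X/S)$, and it suffices to show it is surjective. A point $x\in\mathcal{L}(X/S)$ has an $S$-field $F$ as residue field, and under the identification $\mathcal{L}(X/S)(F)=X(F[[t]])$ it corresponds to a morphism $\Spec F[[t]]\to X$ over $S$; since $F[[t]]$ is a domain, this factors through $X_{red}$ by the universal property of the reduction, producing an $F$-point of $\mathcal{L}(X_{red}/S)$ over $x$. A surjective closed immersion is a homeomorphism and induces an isomorphism on reductions, which is (5). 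Finally, for (6): if $\mathcal{L}(X/S)$ is reduced then the surjective closed immersion $j:=\mathcal{L}(i/S)$ is an isomorphism, its ideal sheaf being contained in the (vanishing) nilradical; compatibility of the truncation morphisms with $\mathcal{L}(\cdot/S)$ gives $\pi_0=i\circ\pi'_0\circ j^{-1}$, where $\pi'_0\colon\mathcal{L}(X_{red}/S)\to X_{red}$, so composing with the section $\tau_{X/S}$ of $\pi_0$ exhibits a section of $i$; a monomorphism that is a split epimorphism is an isomorphism, whence $X$ is reduced.

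The bulk of the work is the functor-of-points bookkeeping in the first step, which is routine. The only genuinely geometric input is the surjectivity in (5), which rests entirely on the fact that $F[[t]]$ is a domain; I expect (6) to be the least mechanical point, since it chains (5), the constant-arc section $\tau_{X/S}$, and the universal property of $(\cdot)_{red}$ in a somewhat indirect way. Some care is also needed at $n=\infty$ to make sure the relevant projective limits of schemes exist and commute with the fibered products involved, but this is guaranteed by the affineness of the truncation morphisms.
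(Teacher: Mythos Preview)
Your proposal is correct and follows essentially the same route as the paper: (1)--(3) are declared ``straightforward'' there (your functor-of-points unfolding is exactly what is meant), (4) and (7) are deduced from (3) with $V=T$, (5) uses that $F[[t]]$ is a domain to see that the closed immersion $\mathcal{L}(X_{red}/S)\to\mathcal{L}(X/S)$ is bijective, and (6) uses the section $\tau_{X/S}$ to produce a retraction of $X_{red}\hookrightarrow X$. The only cosmetic difference is in (6): you route the retraction through the isomorphism $\mathcal{L}(X_{red}/S)\cong\mathcal{L}(X/S)$ coming from (5), while the paper observes directly that if $\mathcal{L}(X/S)$ is reduced then $\pi_0$ factors as $(\pi_0)_{red}\colon\mathcal{L}(X/S)\to X_{red}$, and composes this with $\tau_{X/S}$.
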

\begin{proof}
Points (1), (2) and (3) are straightforward, and (4) follows from
(3) by taking $V=T$. So let us start with (5). Since
$$\mathcal{L}(X_{red}/S)(F)=X_{red}(F[[t]])=X(F[[t]])=\mathcal{L}(X/S)(F)$$
for any $S$-field $F$, we see that the natural closed immersion
$\mathcal{L}(X_{red}/S)\rightarrow \mathcal{L}(X/S)$ is bijective.
Hence, $\mathcal{L}(X_{red}/S)_{red}\rightarrow
\mathcal{L}(X/S)_{red}$ is an isomorphism.

(6) If $\mathcal{L}(X/S)$ is reduced, then the composition of the
natural section $\tau_{X/S}:X\rightarrow \mathcal{L}(X/S)$ with
the morphism $(\pi_0)_{red}:\mathcal{L}(X/S)\rightarrow X_{red}$
defines a left inverse $X\rightarrow X_{red}$ for the natural
closed immersion $X_{red}\rightarrow X$. This is only possible if
$X$ is reduced.

%(7) follows from the infinitesimal lifting criterion
%\cite[17.1.1]{ega4.4}.

% so
%it suffices to show that .

(7) follows from (3) by putting $V=T$.
% (for the ``if'' part, use
%the existence of the section $\sigma^n_W:S\rightarrow
%\mathcal{L}_n(W/S)$).
%Now we prove (7). We start with the first part. Let $U=\Spec A$ be
%any affine $S$-scheme, and put $U_n=\Spec A[t]/(t^{n+1})$ if $n\in
%\N$ and $U_{\infty}=\Spec A[[t]]$. Since the statement is local on
%$W$, $S$ and $T$, we may assume that they are all affine. For each
%$n\in \N\cup\{\infty\}$, there is a natural bijection between the
%set $Hom_{S}(U,\mathcal{L}_n(W/T))$ and the set of couples $(f,g)$
%with $f\in Hom_{S}(U,T)$ and $g$ an element of
% $$Hom_{T}(U, \mathcal{L}_n(W/T))=Hom_T(U_n,W)$$ (where we used $f$ to define a $T$-structure on
% $U$). The forgetful map
%$$Hom_{S}(U,\mathcal{L}_n(W/T))\rightarrow
%Hom_{S}(U,\mathcal{L}_n(W/S))=Hom_S(U_n,W)$$ sends $(f,g)$ to $g$
%(viewed as a $S$-morphism). Hence, our forgetful morphism is an
%isomorphism iff
% the natural injection $$Hom_S(U,T)\rightarrow Hom_S(U_n,T)$$ is a
% bijection for all $U$, \textit{i.e.}  iff $\pi^n_0:\mathcal{L}_n(T/S)\rightarrow
% T$ is an isomorphism.
% The proof of the second part is similar, restricting to
% reduced affine $k$-schemes $U$.
\end{proof}
\begin{remark}
1. Proposition \ref{basic}(7) is reminiscent of the first
fundamental exact sequence for modules of differentials. In fact,
the first part of Proposition \ref{basic}(7) can be deduced from
the first fundamental exact sequence for Hasse-Schmidt derivations
\cite[2.1]{vojta}.

2. The converse of (6) does not hold. For instance, consider the
case where $k$ is a field of characteristic $2$, and put $S=\Spec
k(u)$ and  $X=\Spec k(u)[x]/(x^2+u)$. For a counterexample in
characteristic zero, consider the complex cusp $\Spec
\C[x,y]/(y^2-x^3)$ (see \cite[3.16]{Reguera}). It would be
interesting to find a characterization of the (complex) varieties
with reduced arc scheme, and more generally to understand the
geometric meaning of the non-reduced structure of the jet schemes
and the arc scheme.
%
%??????????????????????????????????? ok if $k$ has characteristic
%$2$, but else? Are there geometrically integral examples in
%characteristic zero?
\end{remark}

\subsection{Differential properties of the arc scheme}\label{sec-arc-diff}
 We'll show that the
structure of the arc scheme is closely related to the differential
properties of morphisms of schemes. First, we need an elementary
lemma.

\begin{lemma}\label{algclo}
Let $k$ be any field, let $k'$ be an algebraic extension of $k$,
and let $K$ be any field containing $k$. If $\varphi:k'\rightarrow
K[[t]]$ is a morphism of $k$-algebras, then the image of $\varphi$
is contained in $K$.
\end{lemma}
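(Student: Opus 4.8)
The plan is to apply $\varphi$ to an algebraic relation satisfied by a given element and then reduce the result modulo $t$. Fix $x\in k'$ and put $u:=\varphi(x)\in K[[t]]$; we must show $u\in K$. Since $k'/k$ is algebraic, $x$ is a root of some nonzero polynomial $P\in k[T]$, and because $\varphi$ is a morphism of $k$-algebras (so that it restricts to the inclusion $k\hookrightarrow K\subseteq K[[t]]$) we get $P(u)=\varphi(P(x))=0$. Hence $u$ is algebraic over $K$, and we may let $Q\in K[T]$ be its minimal polynomial over $K$: it is monic, irreducible, and satisfies $Q(u)=0$.

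Now let $\bar u\in K$ be the image of $u$ under the reduction homomorphism $r\colon K[[t]]\to K[[t]]/(t)=K$, i.e.\ the constant term of $u$; note that $r$ is a $K$-algebra homomorphism. Applying $r$ to $Q(u)=0$ yields $Q(\bar u)=r(Q(u))=0$, so the irreducible polynomial $Q\in K[T]$ has a root $\bar u$ lying in $K$. Therefore $(T-\bar u)\mid Q$ in $K[T]$, and by irreducibility and monicity $Q=T-\bar u$; in particular $u=\bar u\in K$. Since $x\in k'$ was arbitrary, this proves that the image of $\varphi$ is contained in $K$.

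I do not expect a genuine obstacle: the argument is exactly the two reductions above. The one point to be careful about is \emph{not} to reach for a derivative-based proof — differentiating $P(u)=0$ with respect to $t$ and dividing by $(dP/dT)(u)$ — since that only works when $x$ is separable over $k$ and breaks down precisely in the imperfect case that matters for the applications, whereas the minimal-polynomial reduction needs no separability hypothesis. (Alternatively one can use the $t$-adic valuation $v$ on $K[[t]]$: the element $w:=u-\bar u$ lies in $tK[[t]]$ and is algebraic over $K$, and if $w\neq 0$ its monic minimal polynomial $T^n+c_{n-1}T^{n-1}+\dots+c_0\in K[T]$ has $c_0\neq 0$, forcing $0=v(c_0)=v\bigl(-\sum_{i\geq 1}c_iw^i\bigr)\geq v(w)\geq 1$, a contradiction; hence $w=0$.)
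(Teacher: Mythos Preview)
Your proof is correct and is in fact cleaner than the paper's. The paper proceeds by replacing $\alpha:=\varphi(x)$ by $\alpha-\alpha_0$ (so that the constant term vanishes), then splits into two cases according to whether $\alpha$ is separable or purely inseparable over $K$: in the separable case it expands $p(\alpha)$ modulo $t^{j+1}$ and uses $(\partial_xp)(\alpha_0)\neq 0$, while in the purely inseparable case it uses that the minimal polynomial has the shape $x^{p^m}-a$ and computes directly. Your single stroke---reducing the minimal polynomial $Q$ of $u$ modulo $t$ to force $Q$ to be linear---bypasses this dichotomy entirely, and your valuation variant is essentially the same idea phrased additively. What the paper's case analysis buys is nothing extra here; your argument is uniformly valid in all characteristics and is the more economical one. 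Your cautionary remark about the derivative approach is well taken: that is precisely the separable half of the paper's argument, which then needs the Frobenius patch you avoid.
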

\begin{proof}
Suppose that $L$ is an algebraic field extension of $K$ inside
$K[[t]]$. It suffices to show that $K=L$. Let $\alpha=\sum_{i\geq
0}\alpha_i t^i$ be a non-zero element of $L$, with $\alpha_i\in K$
for all $i$, and denote by $p(x)\in K[x]$ its minimal polynomial
over $K$. Let $j\geq 0$ be the smallest index such that
$\alpha_j\neq 0$ and suppose that $j>0$. We will deduce a
contradiction.

 We may assume that $\alpha$ is
either separable or purely inseparable over $K$. In the first
case, $$0=p(\alpha)\equiv p(\alpha_0)+(\partial_xp)(\alpha_0)\cdot
\alpha_j\cdot t^j\mod t^{j+1}$$ which is impossible because
$\alpha_j\neq 0$ by assumption and
 $p$ is separable. In the second case, $p(x)$ is of the form
 $x^{p^m}-a$, with $p>0$ the characteristic of $k$, and
 $$0=p(\alpha)=\sum_{i\geq 0}(\alpha_i)^{p^m}t^{ip^m}-a$$ which
 yields the contradiction $\alpha_j=0$.
\end{proof}
\begin{theorem}\label{diff}
Let $U$ be any scheme, and let $h:T\rightarrow S$ be a morphism of
$U$-schemes.
\begin{enumerate}%\renewcommand{\theenumi}{(\arabic{enumi})}

%\item If $T\rightarrow S$ is surjective and $S$ and $T$ are
%reduced, then $T\rightarrow S$ is formally smooth iff
%$\pi_1:\mathcal{L}(T/S)\rightarrow \mathcal{L}_1(T/S)$ is
%surjective.

\item The following are equivalent:

 (a) the morphism
$T\rightarrow S$ is formally unramified;

(b) there exists an integer $m\in \N^{\ast}\cup\{\infty\}$ such
that the truncation morphism
$\pi^m_0:\mathcal{L}_m(T/S)\rightarrow T$ is an isomorphism;

(c) $\pi^m_0:\mathcal{L}_m(T/S)\rightarrow T$ is an isomorphism
for each $m\in \N\cup\{\infty\}$.

\noindent If $T$ is Noetherian, then these properties are also
equivalent to:

(d) the arc scheme $\mathcal{L}(T/S)$ is Noetherian.

\item If $T\rightarrow S$ is formally smooth, then the following
properties hold:

(a) the morphism $\pi^m_n:\mathcal{L}_m(T/S)\rightarrow
\mathcal{L}_n(T/S)$ is
%formally smooth and
 surjective for each pair $m\geq n$ in $\N\cup\{\infty\}$,

(b) the morphism $\mathcal{L}_n(T/U)\rightarrow
\mathcal{L}_n(S/U)$ is formally smooth for each $n\in
\N\cup\{\infty\}$,
%
%Moreover,

(c) the natural morphism
$$\mathcal{L}_m(T/U)\rightarrow \mathcal{L}_{m}(S/U)\times_{\mathcal{L}_n(S/U)}\mathcal{L}_n(T/U)$$
is surjective for each pair  $m\geq n$ in $\N\cup\{\infty\}$.
%\end{itemize}

\item If $T\rightarrow S$ is formally \'etale, then the natural
diagram
$$\begin{CD}
\mathcal{L}_m(T/U)@>>> \mathcal{L}_m(S/U)
\\@V\pi^{m}_{n}VV @VV\pi^{m}_nV
\\ \mathcal{L}_n(T/U)@>>> \mathcal{L}_n(S/U)
\end{CD}$$ is Cartesian for each pair $m\geq n$ in $\N\cup\{\infty\}$.

\item Let $k\subset k'$ be $S$-fields.

(a) If $k'$ is separable over $k$, then $\mathcal{L}(\Spec
k'/S)\rightarrow \mathcal{L}(\Spec k/S)$ is surjective.

(b) If $k'$ is algebraic over $k$ then
$(\pi_0)_{red}:\mathcal{L}(\Spec k'/k)_{red}\rightarrow \Spec k'$
is an isomorphism. The converse holds if $k'$ is finite over a
separably generated extension of $k$.

 \item If $T\rightarrow S$ is locally of finite type, then the following are equivalent:

(a) the morphism $T\rightarrow S$ is quasi-finite;

(b) the morphism $(\pi_0)_{red}:\mathcal{L}(T/S)_{red}\rightarrow
T_{red}$ is an isomorphism;

(c) the morphism $(\pi_0)_{red}$ is of finite type.

\noindent If, moreover, $T$ is Noetherian, then these properties
are also equivalent to:

(d) the scheme $\mathcal{L}(T/S)_{red}$ is Noetherian.
\end{enumerate}
\end{theorem}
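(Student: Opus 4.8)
The plan is to run all five parts off a single engine: the functor‑of‑points description of the jet schemes, together with the standard infinitesimal lifting criteria and with Lemma \ref{algclo}. Recall that for every $S$‑algebra $A$ one has $\mathcal{L}_m(T/S)(A)=\mathrm{Hom}_S(\Spec A[t]/(t^{m+1}),T)$, that the closed immersion $\Spec A\hookrightarrow\Spec A[t]/(t^{m+1})$ cut out by $t$ is a nilpotent thickening, and that for $m=\infty$ one has $\mathcal{L}(T/S)(F)=T(F[[t]])$ for every $S$‑field $F$, and $\mathcal{L}(T/S)(A)=T(A[[t]])$ locally on $T$; the section $\tau_{T/S}$ is a closed immersion splitting $\pi_0$, and we write $\tau_{red}$ for its reduction. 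Since all assertions are local on $S$ and $T$, one may take $S=\Spec R$ and $T=\Spec B$ affine. Two soft tools will be used throughout: the identity $\mathcal{L}(T/S)=\lim_m\mathcal{L}_m(T/S)$, which turns ``modulo $t^{m+1}$ for all $m$'' into genuine arcs, and the observation that a surjective closed immersion into a reduced scheme is an isomorphism, which is how ``$(\pi_0)_{red}$ bijective on field‑valued points'' gets upgraded to ``$(\pi_0)_{red}$ an isomorphism'' via $\tau_{red}$.

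\emph{Part (1).} The implication (a)$\Rightarrow$(c) is immediate: formal unramifiedness makes $T(A[t]/(t^{m+1}))\to T(A)$ injective, and the constant section makes it surjective, so $\pi^m_0$ is an isomorphism, the case $m=\infty$ following by the limit. (c)$\Rightarrow$(b) is trivial. For (b)$\Rightarrow$(a), where $m\geq 1$, the key move is to test the isomorphism $\pi^m_0$ against the \emph{trivial square‑zero $S$‑algebra} $A=B\oplus\Omega_{B/R}$: the $R$‑algebra homomorphism $b\mapsto(b,0)+(0,db)\,t$ is an $A$‑point of $\mathcal{L}_m(T/S)$ lying over the structural $A$‑point $b\mapsto(b,0)$ of $T$, so if $\pi^m_0$ is a monomorphism this point must coincide with the constant jet, which forces $db=0$ for every $b\in B$, i.e. $\Omega_{T/S}=0$. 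When $T$ is Noetherian, (a)$\Rightarrow$(d) holds because $\mathcal{L}(T/S)\cong T$ by (b); and (d)$\Rightarrow$(a) goes by contraposition: if $\Omega_{B/R}\neq 0$, pick (after localizing) a $b$ with $db\neq 0$ and show that the higher jet coordinates of $b$ generate in the jet algebra an infinite strictly increasing chain of ideals, so $\mathcal{L}(T/S)$ is not Noetherian — cleanest via a reduction to the case of a curve or of affine space, in the spirit of Lemma \ref{curve}.

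\emph{Parts (2)--(4).} For (2) and (3) one factors the surjection $A[t]/(t^{m+1})\twoheadrightarrow A[t]/(t^{n+1})$ as a tower of square‑zero extensions $A[t]/(t^{j+1})\twoheadrightarrow A[t]/(t^{j})$ and lifts jets one level at a time using formal smoothness (resp. uniquely, using formal étaleness); this yields the surjectivity of $\pi^m_n$, the case $m=\infty$ being obtained by assembling a compatible system of lifts through the limit presentation, with the usual care that surjectivity of a morphism of \emph{schemes} is tested by lifting residue‑field points, which the same stepwise argument provides. In the étale case, ``unique lift'' upgrades to the Cartesian square by checking the universal property of the fibre product on $A$‑points; the statements (2b), (2c) then combine this with part (1), with the fact that Weil restriction preserves formal smoothness, and with the base‑change and forgetful compatibilities of Proposition \ref{basic}. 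Part (4)(a) is the field‑extension manifestation of these formal‑smoothness results. For (4)(b), $\mathcal{L}(\Spec k'/k)(F)=\mathrm{Hom}_k(k',F[[t]])$, and when $k'/k$ is algebraic Lemma \ref{algclo} forces this set into $F$; hence $(\pi_0)_{red}$ is bijective on field‑valued points and the $\tau_{red}$ argument concludes. The converse, under the hypothesis that $k'$ is finite over a separably generated extension of $k$, is proved by exhibiting, when $k'/k$ is not algebraic, non‑constant arcs that move an element of a separating transcendence basis and therefore produce points of $\mathcal{L}(\Spec k'/k)$ with residue field transcendental over $k'$.

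\emph{Part (5), and the main obstacle.} For (a)$\Rightarrow$(b): an $F$‑point of $\mathcal{L}(T/S)$ is an arc $\Spec F[[t]]\to T$, and since $\Spec F[[t]]\to S$ factors through $\Spec F$, the arc lands in one fibre $T_s$ of $T\to S$; base‑changing $T_s$ to $F$, quasi‑finiteness plus local finite type make it a finite disjoint union of spectra of Artinian local $F$‑algebras with residue fields finite over $F$, and as $\Spec F[[t]]$ is connected the arc lands in one of these, where it kills the nilpotents (as $F[[t]]$ is reduced) and then, by Lemma \ref{algclo}, the residual finite extension — so the arc is constant. Hence $(\pi_0)_{red}$ is bijective on field‑valued points and the $\tau_{red}$ argument applies; (b)$\Rightarrow$(c) is trivial. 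For (c)$\Rightarrow$(a) and (d)$\Rightarrow$(a) one contraposes: if $T\to S$ is not quasi‑finite, some fibre has a point of dimension $\geq 1$, hence contains a $\kappa(s)$‑curve $C$, and $\mathcal{L}(C/\kappa(s))_{red}$ sits, via Proposition \ref{basic} and the preservation of (locally) closed immersions, as a closed subscheme of the base change of $\mathcal{L}(T/S)_{red}$ along $C_{red}\to T_{red}$; by Lemma \ref{curve} the morphism $\mathcal{L}(C/\kappa(s))_{red}\to C_{red}$ is neither of finite type nor Noetherian, contradicting (c), resp. (d); and (a)$\Rightarrow$(d), for $T$ Noetherian, follows once more from $\mathcal{L}(T/S)_{red}\cong T_{red}$. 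I expect the real difficulty to be twofold. The genuinely constructive input is hidden in (1d), (5c) and (5d): one must exhibit infinite strictly increasing chains of ideals in jet algebras, and the clean way to make this rigorous is to isolate the curve computation (Lemma \ref{curve}) and reduce the general statements to it by cutting out a suitable curve, resp. a coordinate axis. The other recurring difficulty is the bookkeeping at $n=\infty$, where one must keep track of the fact that surjectivity and isomorphy of the \emph{schemes} $\mathcal{L}_n(T/S)$ and $\mathcal{L}(T/S)$ — not just bijectivity on field‑valued points — are what is required; this is exactly what the limit presentation of $\mathcal{L}(T/S)$ and the ``surjective closed immersion into a reduced scheme'' remark are there to handle.
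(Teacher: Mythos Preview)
Most of your sketch lines up with the paper: (1)(a)$\Leftrightarrow$(b)$\Leftrightarrow$(c) via the infinitesimal lifting criterion (your test against $B\oplus\Omega_{B/R}$ is a clean variant of the paper's $u\mapsto u(1+t)$ trick), parts (2) and (3) via the square‑zero tower, (4a) via formal smoothness of separable field extensions, and (4b) forward via Lemma~\ref{algclo}. Your (5)(a)$\Rightarrow$(b) is also fine and essentially the paper's fibrewise reduction.

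The real problem is (1)(d)$\Rightarrow$(a). Your plan ``pick $b$ with $db\neq 0$ and exhibit a strictly increasing chain generated by the higher jet coordinates of $b$, reducing to a curve or affine space'' does not work: a formally ramified morphism need not have any positive‑dimensional fibre at all. The basic example is $T=\Spec k(x)\to S=\Spec k$; here $T$ is a single point, there is no curve to cut out, yet $\Omega_{T/S}\neq 0$. Moreover, there is no obvious reason why the images of the $b_i$ in the jet algebra generate a \emph{strictly} increasing chain; relations among them are exactly what you would have to rule out. The paper's argument is a different idea that you are missing: for each $n\geq 1$ the reparametrisation $t\mapsto t^n$ induces a closed immersion $\iota_n:\mathcal{L}(T/S)\to\mathcal{L}(T/S)$; Noetherianity forces every such self‑closed‑immersion to be an isomorphism, hence $T(C[[t^n]])=T(C[[t]])$ for all $n$ and all $C$, and intersecting over $n$ gives $T(C)=T(C[[t]])$, i.e.\ $\pi_0$ is an isomorphism.

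The same gap recurs in (5). Your contrapositive for (c)$\Rightarrow$(a) and (d)$\Rightarrow$(a) invokes Lemma~\ref{curve}, but that lemma is about irreducibility and connectedness of $\mathcal{L}(C)$, not about Noetherianity or finite type; the statement you actually need (that $\mathcal{L}(C/k)_{red}\to C_{red}$ is not of finite type for a curve $C$) is Corollary~\ref{cor-ft}, which is a \emph{consequence} of the theorem you are proving, so the argument is circular. The paper instead reduces to fibres (so $S=\Spec k$ and $T$ is of finite type over $k$, hence Noetherian), proves (b)$\Rightarrow$(a) by passing to the fraction field $K$ of an irreducible component and invoking (4b) to force $K/k$ algebraic, and proves (d)$\Rightarrow$(b) by rerunning the $\iota_n$ argument restricted to reduced test algebras.
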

\begin{proof}
(1) The implications $(c)\Rightarrow (b)$ and $(c)\Rightarrow (d)$
 are trivial.

 It follows immediately from the definition that
$\mathcal{L}_1(T/S)$ is naturally isomorphic, as a $T$-scheme, to
the relative tangent scheme $\mathbf{Spec}\,(Sym(\Omega_{T/S}))$,
so $(c)\Rightarrow (a)\Rightarrow (b)$ by \cite[17.2.1]{ega4.4}.

Assume that $(a)$ holds. It is enough to prove $(c)$ for $m\in \N$
(the case $m=\infty$ follows by passing to the limit). The
property $(c)$ is equivalent to the property that for each
$S$-scheme $Z$, the natural map $Hom_S(Z_m,T)\rightarrow
Hom_S(Z,T)$ is a bijection. This map is always surjective, by the
existence of the natural section $Z_m\rightarrow Z$ for the
truncation morphism $Z\rightarrow Z_m$. It is injective by the
infinitesimal lifting criterion for formally unramified morphisms
\cite[17.1.1]{ega4.4}.

Now assume that $(b)$ holds. We will deduce $(a)$. We may assume
that $S=\Spec A$ and $T=\Spec B$ are affine. Suppose that
$T\rightarrow S$ is formally ramified. By \cite[17.2.1]{ega4.4},
this means that $\Omega^1_{B/A}\neq 0$, \textit{i.e.}  there
exists a $A$-algebra $C$ and a morphism of $A$-algebras
$\varphi:B\rightarrow C[u]/(u^2)$ whose image is not contained in
$C$. Put $C'=C[u]/(u^2)$. Composing $\varphi$ with the morphism of
$C$-algebras $$C'\rightarrow  C'[[t]]:u\mapsto u(1+t)$$ if
$m=\infty$ and with $$C'\rightarrow  C'[t]/(t^{m+1}):u\mapsto
u(1+t)$$ else, we get an element of $\mathcal{L}_m(T/S)(C')$ which
is not contained in the image of
$$\tau^m_{T/S}(C'):T(C')\rightarrow \mathcal{L}_m(T/S)(C')$$ This
contradicts the assumption that $\pi^m_0$, and hence the section
$\tau^m_{T/S}$, are isomorphisms.

Finally, suppose that $T$ is Noetherian and $(d)$ holds. We will
deduce $(b)$ with $m=\infty$. We may assume that $S=\Spec A$ and
$T=\Spec B$ are affine.
%By assumption, there exists a $A$-algebra $R$ such that the
%injection $T(R)\rightarrow T(R[[t]])$ induced by $\tau_T$ is not
%a bijection. Let $\varphi$ be an element in the complement of its
%image. Replacing $t$ by a power of $t$, we may assume that the
%image of $\varphi:B\rightarrow R[[t]]$ is not contained in
%$R[[t^n]]\subset R[[t]]$ for any $n>1$. On the other hand,
 Denote by $\mathcal{F}$ the contravariant functor from the category of
$A$-algebras to the category of $S$-schemes mapping an $A$-algebra
 $C$ to the scheme $\Spec C[[t]]$, and, for each $n>0$, denote by $j_n$ the natural
transformation $\mathcal{F}\rightarrow \mathcal{F}$ mapping $C$ to
the morphism
$$j_n(C):\Spec C[[t]]\rightarrow \Spec C[[t]]:t\mapsto t^n$$
 It is
easily seen that there exists for each $n>0$ a unique closed
immersion of $S$-schemes $\iota_n:\mathcal{L}(T/S)\rightarrow
\mathcal{L}(T/S)$ inducing the map
$$\iota_n(C):\mathcal{L}(T/S)(C)=T(C[[t]])\rightarrow T(C[[t^n]])\subset T(C[[t]]):x\mapsto x\circ j_n(C)$$
for each $A$-algebra $C$ (see \cite[3.8]{NiSe-weilres} for a more
general result). By the Ascending Chain Condition for ideals in a
Noetherian ring, any closed immersion of a Noetherian scheme into
itself is an isomorphism. Hence, $\iota_n$ is an isomorphism, and
the inclusion $T(C[[t^n]])\subset T(C[[t]])$ is a bijection for
each $n>0$ and any $A$-algebra $C$. On the other hand,
$\cap_{n>0}T(C[[t^n]])=T(C)$, so $T(C)\subset T(C[[t]])$ is a
bijection and $\tau_{T/S}:T\rightarrow \mathcal{L}(T/S)$ is an
isomorphism, inverse to $\pi_0$. \vspace{5pt}

(2,3) These are all easy formal consequences of the infinitesimal
lifting criteria for formally smooth, resp. formally \'etale
morphisms \cite[17.1.1]{ega4.4}.

\vspace{5pt}
 (4a) follows from (2c) (with $m=\infty$ and $n=0$) since $k'$ is
 formally smooth over $k$ by \cite[19.6.1]{ega4.1}.
 %Let $K=k(u_i)_{i\in I}$ be
%a transcendence basis of $k'/k$ such that $k'/K$ is separable.
%Let $\varphi$ be any point of $\mathcal{L}(\Spec k/S)$,
% with residue field $L$. The point $\varphi$ corresponds to a
% morphism of $S$-algebras $\varphi:k\rightarrow L[[t]]$. If we put $L'=L(u_i)_{i\in I}$, then $\varphi$
%  extends to a
% morphism of $S$-algebras $\varphi':K\rightarrow L'[[t]]$
% which sends $u_i$ to itself. By Hensel's Lemma, $\varphi'$
% extends uniquely to a morphism of $S$-algebras $\psi:k'\rightarrow
% L''[[t]]$ with $L''$ an algebraic closure of $L'$. This morphism
% defines a point of $\mathcal{L}(\Spec k'/S)$ which lies over $\varphi$.

 (4b) By the existence of the section $\tau_{T/S}$, which is a closed
 immersion, $(\pi_0)_{red}$ is an isomorphism iff it is a
 bijection on the level of underlying sets.

 Assume that $k'/k$ is algebraic, let $K$ be a
$k$-field, and $\varphi:k'\rightarrow K[[t]]$ a morphism of
$k$-algebras. It suffices to show that the image of $\varphi$ is
contained in $K$. This follows from Lemma \ref{algclo}.

Conversely, assume that $(\pi_0)_{red}$ is an isomorphism and that
$k'$ is finite over a separably generated extension $L$ of $k$. We
may assume that $L$ is separably closed in $k'$. Let
$K=k(u_i)_{i\in I}$ be a purely transcendental extension of $k$
inside $L$ such that $L/K$ is separable and algebraic. We have to
show that $I$ is empty; suppose the contrary. Consider the
morphism of $k$-algebras $\varphi:K\rightarrow K[[t]]$ mapping
$u_i$ to $u_i+t$ for each $i\in I$. By Hensel's Lemma, it extends
uniquely to a morphism $\varphi:L\rightarrow L[[t]]$ whose
composition with reduction modulo $t$ is the identity on $L$. If
$\alpha$ is a purely inseparable element of $k'$ over $L$, then
for $i\gg 0$,
 $\varphi$
extends uniquely to a morphism $L(\alpha)\rightarrow
L(\alpha)[[t^{p^{-i}}]]$ such that the composition with reduction
modulo $t^{p^{-i}}$ is the identity on $K(\alpha)$.
Reparametrizing by putting $t'=t^{p^{-i}}$ and continuing with $L$
replaced by $L(\alpha)$, we obtain an extension of $\varphi$ to a
morphism $k'\rightarrow k'[[t]]$ whose image is not contained in
$k'$. This contradicts the assumption that $(\pi_0)_{red}$ is an
isomorphism.
%Finally, assume that $T$ is Noetherian, and that $(f)$ holds.
%Since the property of being formally unramified can be checked on
%the fibers, we may assume that $S$ is a point, say $S=\Spec k$
%with $k$ a field (in view of Proposition \ref{basic}(1),
%$\mathcal{L}(T\times_S s/s)$ is a closed subscheme of
%$\mathcal{L}(T/S)$,
%and hence Noetherian, for any point $s$ of $S$). %, is of the form $\sqcup_{i=1}^{Reguera} \Spec k_i$ with
%%$k_i$ a separable extension of $k(s)$ for each $i$.
%% It follows
%%from the first part of the theorem that the scheme $T$ is
%%discrete.

\vspace{5pt}

 (5) The implications $(b)\Rightarrow (c)\Rightarrow (d)$ are trivial. We noted already in the proof of (2)
 that $(\pi_0)_{red}$ is an isomorphism iff it is a
 bijection on the level of underlying sets. Hence, by Proposition \ref{basic}(1), all properties
 in the statement can be checked on the fibers and we may assume
 that $S=\Spec k$ with $k$ a field. Since all the properties are
 local on $T$, we may assume that $T$ is connected and $T=\Spec B$ with $B$ a
 $k$-algebra of finite type. By Proposition \ref{basic}(5) we may assume that
 $T$ is reduced.

If $(a)$ holds, then $B$ is a finite field extension of $k$, and
$(b)$ follows from Lemma \ref{algclo}.
%The statement is local on $S,T$,
%so we may assume that $T$ is of finite type over $S$, hence
%quasi-finite, that $T\rightarrow S$ is separated, and that $T$ is
%quasi-compact and quasi-separated. With these hypotheses, we can
%apply Zariski's Main Theorem \cite[18.12.13]{ega4.4} and there
%exists an open immersion of the $S$-scheme $T$ into a finite
%$S$-scheme $T'$. By Proposition \ref{basic}(6) we may assume that
%$T=T'$. Finally,

%Taking the quotient of $A$ by the kernel of $A\rightarrow K$ and
%tensoring $R$ with the field of fractions of this quotient, we may
%assume that $A$ is a field.
% Let $r=\sum_{i\geq 0}r_it^i$  be any element of $R$, with $r_i\in
%K$. Since $R$ is finite over $k$, the coefficient $r_0$ is
%algebraic over $k$. If we denote by $p\in k[x]$ its minimal
%polynomial, then $s=p(r)$ is an element of $R$ which is divisible
%by $t$. Looking at the valuations one sees that either $s=0$, or
%the set $\{s^j\,|\,j\in \N\}$ is linearly independent over $k$.
%The latter is impossible since $R$ is finite over $k$, so $s=0$.

 Now, assume that $(b)$ holds. In order to deduce $(a)$, we have to show that $B$ is a field.
If $P$ is a minimal prime ideal $B$, then $B$ is a field iff $B/P$
is a field, since $B$ is reduced and $\Spec B$ is connected.
Hence, we may assume that $B$ is a domain. Denote by $K$ its
quotient field. The fact that $\mathcal{L}(T/S)\rightarrow T$ is a
bijection implies that the same holds if we replace $T$ by $\Spec
K$, by (3). But $K/k$ is finitely generated, so (4) implies that
$K$ is algebraic over $k$, and we can conclude that $B=K$.

Finally, if $(d)$ holds, the arguments in the proof of
(1)$(d)\Rightarrow (b)$ (restricted to reduced algebras $C$) show
that $(b)$ holds.
%, and denote by $e$ the smallest index $i>0$ with $r_i\neq 0$. For
%each integer $n\geq 0$, we denote by $P_n$ the set $\{r^j\,|0\leq
%j\leq n\}$. Let $\ell\geq 0$ be the unique integer for which
%$P_\ell$ is linearly independent and $P_{\ell+1}$ is linearly
%dependent over $A$, and let $\lambda_0,\ldots,\lambda_{\ell}$ be
%non-zero elements in $A$ such that
%$r^{\ell+1}=\sum_{j=0}^{\ell}\lambda_j r^j$. Looking at the
%coefficients of $t^0$ and $t^e$ we get equations
%\begin{eqnarray*}
%(r_0)^{\ell+1}&=&\sum_{j=0}^{\ell}\lambda_j\cdot (r_0)^j
%\\ (\ell+1)(r_0)^{\ell}r_e&=& \sum_{j=1}^{\ell}\lambda_j(j+1)
%(r_0)^{j-1}r_e
%\end{eqnarray*}
\end{proof}
\begin{remark}
Property (4b) does not extend to the jet spaces
$\mathcal{L}_m(\Spec k'/k)$. For instance, let $k$ be an imperfect
field of characteristic $p$, pick an element $a$ in $k-k^p$, and
put $k'=k[x]/(x^p-a)$. Then $\mathcal{L}_n(\Spec k'
/k)_{red}\rightarrow \Spec k'$ is not an isomorphism for any $n\in
\N^\ast$, since $\mathcal{L}_n(\Spec k'/k)_{red}$ is the closed
subscheme of $\Spec k[x_0,\ldots,x_n]$ defined by the equations
$x_i=0$ for all $i\geq 0$ with $i\cdot p\leq n$.

For the converse implication in (4b), the condition that $k'$ is
finite over a separably generated extension of $k$ cannot be
omitted: there exist formally unramified field extensions $k'/k$
which are not algebraic. For instance, if $k$ is any field of
characteristic $p>0$ and $k'$ is the perfect closure of $k(u)$,
then $\Omega^1_{k'/\Z}=0$ because $d(a^p)=p\cdot a^{p-1}\cdot
da=0$ for any element $a$ of $k'$. Since $k'/k$ is not separable
if $k$ is not perfect, the same example shows that the converse of
(4a) is false.

Similar examples show that the condition that $T\rightarrow S$ is
locally of finite type cannot be dropped in (5), even if we
replace ``is quasi-finite'' by ``has discrete fibers'' in (5a).
For example, if $k$ is an algebraically closed field of
characteristic $p>0$ and $B$ is the $k$-algebra
$\cup_{i>0}k[{t}^{p^{-i}}]$, then $B$ is formally unramified over
$k$ but $\Spec B$ is not discrete.
\end{remark}
\begin{cor}\label{cor-ft}
If $k$ is any field and $X$ is a $k$-scheme of finite type, then
\begin{itemize}
\item the following are equivalent:

(a) the arc scheme $\mathcal{L}(X/k)$ is Noetherian;

(b) the truncation morphism $\pi_0:\mathcal{L}(X/k)\rightarrow X$
is an isomorphism;

(c) the scheme $X$ is \'etale over $k$.

\item the following are equivalent:

(a) the scheme $\mathcal{L}(X/k)_{red}$ is Noetherian;

(b) the morphism $(\pi_0)_{red}:\mathcal{L}(X/k)_{red}\rightarrow
X_{red}$ is an isomorphism;

(c) the scheme $X$ has dimension $0$.

\end{itemize}
\end{cor}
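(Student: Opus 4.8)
The plan is to deduce Corollary \ref{cor-ft} from Theorem \ref{diff} by taking $S = U = \Spec k$ and $T = X$, and then translating the abstract conditions there into the concrete statements about $X/k$. Everything hinges on the observation that, for a $k$-scheme $X$ of finite type, ``formally unramified over $k$'' is equivalent to ``unramified over $k$'' (since it is automatically locally of finite presentation), and an unramified morphism to $\Spec k$ is the same as an \'etale morphism; similarly ``quasi-finite over $k$'' for a finite type scheme just means $X$ has dimension $0$, since the fibers are already finite type over a field and quasi-finiteness is then equivalent to finiteness of the underlying set, hence to $\dim X = 0$.

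For the first list of equivalences: condition (a), that $\mathcal{L}(X/k)$ is Noetherian, is exactly condition (d) of Theorem \ref{diff}(1) (applicable since $X$, being of finite type over a field, is Noetherian). Condition (b), that $\pi_0$ is an isomorphism, is condition (c) of Theorem \ref{diff}(1) with $m = \infty$ — and by that theorem it is equivalent to $X/k$ being formally unramified. So I would invoke Theorem \ref{diff}(1) to get (a) $\Leftrightarrow$ (b) $\Leftrightarrow$ ``$X$ formally unramified over $k$''. It then remains to identify ``$X$ formally unramified over $k$'' with (c) ``$X$ \'etale over $k$''. For this I would cite that a morphism locally of finite type is unramified iff it is formally unramified \cite[17.4.1]{ega4.4} (or \cite[17.2.1]{ega4.4} via $\Omega^1_{X/k} = 0$ together with finite presentation), and that an unramified $k$-scheme of finite type is \'etale over $k$ because flatness over a field is automatic; combining, $X/k$ is formally unramified iff $\Omega^1_{X/k} = 0$ iff $X$ is \'etale over $k$.

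For the second list: condition (a), that $\mathcal{L}(X/k)_{red}$ is Noetherian, is condition (d) of Theorem \ref{diff}(5) (again $X$ is Noetherian and $X \to \Spec k$ is locally of finite type). Condition (b), that $(\pi_0)_{red}$ is an isomorphism, is condition (b) of Theorem \ref{diff}(5). So Theorem \ref{diff}(5) gives (a) $\Leftrightarrow$ (b) $\Leftrightarrow$ ``$X \to \Spec k$ is quasi-finite''. It then remains to see that quasi-finiteness over a field is equivalent to $\dim X = 0$ for $X$ of finite type: a finite-type $k$-scheme is quasi-finite over $k$ iff it has finitely many points iff each of its (finitely many) irreducible components is a point iff it is zero-dimensional. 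I would write this last chain out in a sentence.

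The main obstacle — really the only non-bookkeeping point — is making the translation between the functorial/differential vocabulary of Theorem \ref{diff} (``formally unramified'', ``quasi-finite'') and the geometric vocabulary of the corollary (``\'etale over $k$'', ``dimension $0$'') cleanly and with correct references. Once one is careful that finite type over a field forces both finite presentation (so formally unramified $=$ unramified) and flatness (so unramified $=$ \'etale), and recalls that quasi-finite finite-type schemes over a field are exactly the zero-dimensional ones, the corollary is an immediate specialization and there is nothing further to prove.
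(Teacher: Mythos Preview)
Your proposal is correct and matches the paper's approach: the paper states Corollary \ref{cor-ft} without proof, treating it as an immediate specialization of Theorem \ref{diff}(1) and (5) to $T=X$, $S=\Spec k$. The translation steps you spell out (formally unramified $\Leftrightarrow$ unramified $\Leftrightarrow$ \'etale for a finite-type $k$-scheme; quasi-finite over $k$ $\Leftrightarrow$ $\dim X=0$) are exactly the ones implicitly used, so there is nothing to add.
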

\begin{cor}\label{forget}
Let $k'/k$ a field extension, and fix a $k'$-scheme $X$.
\begin{itemize}
\item If $k'/k$ is algebraic,then the forgetful morphism
$$\mathcal{L}(X/k')_{red}\rightarrow \mathcal{L}(X/k)_{red}$$ is an
isomorphism.
%The converse holds if $k'/k$ is separably generated
%or finitely generated over a purely transcendental extension of
%$k$.

\item If the field $k'$ is formally unramified over $k$ then the
forgetful morphism
$$\mathcal{L}_n(X/k')\rightarrow \mathcal{L}_n(X/k)$$ is an
isomorphism for each $n\in \N\cup\{\infty\}$.
%(and it is enough to
%check this for one value of $n$ in $\N^{\ast}\cup \{\infty\}$).
\end{itemize}
\end{cor}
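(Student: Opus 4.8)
The plan is to reduce both assertions to the case $X=\Spec k'$ (regarded as a $k$-scheme) via the ``change of base field'' statement recorded in Proposition \ref{basic}(7), and to feed in the differential characterisations of Theorem \ref{diff}. No genuinely new idea is needed; the content is entirely in Section \ref{sec-arc}.

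For the second bullet point, assume $k'/k$ is formally unramified, i.e. $\Omega^1_{k'/k}=0$, so that the morphism $\Spec k'\rightarrow \Spec k$ is formally unramified. By the equivalence $(a)\Leftrightarrow(c)$ in Theorem \ref{diff}(1) (which requires no Noetherian hypothesis), the truncation morphism $\pi^n_0:\mathcal{L}_n(\Spec k'/k)\rightarrow \Spec k'$ is an isomorphism for every $n\in\N\cup\{\infty\}$. Now apply the first part of Proposition \ref{basic}(7) with $T=\Spec k'$, $S=\Spec k$ and $W=X$ (a $k'$-scheme): it gives that the forgetful morphism $\mathcal{L}_n(X/k')\rightarrow \mathcal{L}_n(X/k)$ is an isomorphism for each $n$. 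Observe that no finiteness assumption on $X$ intervenes, since Proposition \ref{basic}(7) imposes none.

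For the first bullet point, assume only that $k'/k$ is algebraic. Here $\Spec k'\rightarrow \Spec k$ need not be formally unramified — for a purely inseparable extension $\Omega^1_{k'/k}$ can be non-zero — so one cannot hope for an isomorphism before passing to reduced schemes, and indeed this is precisely why the two bullet points have different conclusions. The substitute is Theorem \ref{diff}(4b): since $k'/k$ is algebraic, $(\pi_0)_{red}:\mathcal{L}(\Spec k'/k)_{red}\rightarrow \Spec k'$ is an isomorphism (note $(\Spec k')_{red}=\Spec k'$, $k'$ being a field). Feeding this into the second part of Proposition \ref{basic}(7), with $n=\infty$, $T=\Spec k'$, $S=\Spec k$ and $W=X$, yields that $\mathcal{L}(X/k')_{red}\rightarrow \mathcal{L}(X/k)_{red}$ is an isomorphism.

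Thus the only ``obstacle'' is purely bookkeeping: keeping straight which clause of Theorem \ref{diff} applies (the unramified clause (1) in the first case, versus the reduced-scheme clause (4b) in the algebraic case) and correctly matching the schemes $T,S,W$ in the two halves of Proposition \ref{basic}(7). Everything else is formal.
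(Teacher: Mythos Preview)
Your proof is correct and follows exactly the paper's approach: the paper's proof reads, in its entirety, ``This follows immediately from Proposition \ref{basic}(7) and Theorem \ref{diff}(1,4b),'' and you have simply unpacked those references with the appropriate choices of $T$, $S$, $W$.
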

\begin{proof}
%We start with the first statement. By Proposition \ref{basic}(3)
%we only have to show that the truncation morphism
%$\pi_0:\mathcal{L}(k'/k)_{red}\rightarrow \Spec k'$ is an
%isomorphism. It suffices to show that for any reduced $k$-algebra
%$B$ and any morphism of $k$-algebras $\varphi:k'\rightarrow
%B[[t]]$, the image of $k'$ is contained in $B$.
This follows immediately from Proposition \ref{basic}(7) and
Theorem \ref{diff}(1,4b).
\end{proof}
%\begin{remark}
%The conditions in Corollary \ref{forget} cannot be omitted. For
%instance, if $k'=k(x)$ and $X=\Spec k'$, then As a second example,
%
%\end{remark}

%\begin{lemma}
%Let $X$, $Y$ be two separated $k$-schemes of finite type.
%\begin{enumerate}\renewcommand{\theenumi}{(\arabic{enumi})}
%\item If $X$ is a $k$-group, $k$-ring scheme, then also is $\mathcal{L}(X)$;
%\item if $f:X\rightarrow Y$ is a smooth (resp. \'etale) $k$-morphism of schemes,
%then the induced $k$-morphism of schemes $\mathcal{L}(f): \mathcal{L}(X)\rightarrow \mathcal{L}(Y)$ is
%formally smooth (resp. formally \'etale).
%\end{enumerate}
%\end{lemma}
%
%Another properties can be deduced by easy computations.%
%

%%%%%%%%%%%%%%%%%%%%%%%%%%%%%%%%%%%%%%%%%%%%%%%%%%%%%%%%%
%\section{Approximation by curves}\label{sec-curves}
%%%%%%%%%%%%%%%%%%%%%%%%%%%%%%%%%%%%%%%%%%%%%%%%%%%%%%%%%

%In particular, every closed point is contained in a connected curve on $X$.

%%%%%%%%%%%%%%%%%%%%%%%%%%%%%%%%%%%%%%%%%%%%%%%%%%%%%%%
\section{Topology of arc spaces}
\label{sec-top}
%%%%%%%%%%%%%%%%%%%%%%%%%%%%%%%%%%%%%%%%%%%%%%%%%%%%%%%
\subsection{Basic topological properties of the arc
scheme}\label{sec-top-diff} We collect some elementary topological
properties of the arc scheme, which we'll need in the remainder of
the article.
\begin{prop}\label{prelim}
Let $X\rightarrow S$ be a morphism of schemes. We denote by
$\mathcal{I}(X)$ the set of irreducible components of $X$, and by
$\mathcal{C}(X)$ the set of connected components of $X$ (with
their reduced induced structure).
\begin{enumerate}%\renewcommand{\theenumi}{(\arabic{enumi})}
 \item
If $X$ is integral, and smooth over $S$, then $\mathcal{L}(X/S)$
is integral.
  \item For each element $X_i$
of $\mathcal{I}(X)$ (resp. $\mathcal{C}(X)$), the scheme
$\mathcal{L}(X_i/S)$ is a union of irreducible (resp. connected)
components of $\mathcal{L}(X/S)$. Moreover,
$$\mathcal{L}(X/S)=\cup_{X_i\in
\mathcal{I}(X)}\mathcal{L}(X_i/S)\mbox{ and
}\mathcal{L}(X/S)=\cup_{Y_j\in\mathcal{C}(X)}\mathcal{L}(Y_j/S)$$
(as topological spaces), and if $X_i$ and $X_j$ are distinct
elements of $\mathcal{I}(X)$ (resp. $\mathcal{C}(X)$) then
$\mathcal{L}(X_i/S)\not\subset \mathcal{L}(X_j/S)$. \item If
$S=\Spec k$, with $k$ a field, and $k'$ is an algebraic, purely
inseparable field extension of $k$, then the natural morphism
$h:\mathcal{L}(X'\times_k k'/k')\rightarrow \mathcal{L}(X/k)$
induced by the base change isomorphism in Proposition
\ref{basic}(1), is a homeomorphism.
% \item If $k$ has characteristic zero, and $X$ and $Y$ are
%birationally equivalent $k$-schemes of finite type, and
%$X\rightarrow Y$ is a rational map which is a birational
%equivalence,
% then the rational map $\mathcal{L}(h):\mathcal{L}(X)\rightarrow \mathcal{L}(Y)$ is a birational equivalence.
\end{enumerate}
\end{prop}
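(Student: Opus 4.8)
The three statements are of rather different natures, so I would organize the proof as three independent blocks, relying throughout on the functorial description $\mathcal{L}(X/S)(F) = X(F[[t]])$ for $S$-fields $F$, together with the fact that $\mathcal{L}(\cdot/S)$ respects open and closed immersions and commutes with base change (Proposition \ref{basic}(1,2)).

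\textbf{Part (1).} Since $X$ is smooth over $S$, the truncation morphisms $\pi^{n+1}_n : \mathcal{L}_{n+1}(X/S)\to \mathcal{L}_n(X/S)$ are (Zariski-locally on the base) affine bundles: this is the standard smoothness-implies-formal-smoothness infinitesimal lifting argument, and indeed $\mathcal{L}_n(X/S)$ is smooth over $S$ by the cited \cite[7.6.5]{neron}. In particular each $\pi^{n+1}_n$ is faithfully flat with geometrically irreducible fibers, so if $\mathcal{L}_n(X/S)$ is irreducible then so is $\mathcal{L}_{n+1}(X/S)$; since $\mathcal{L}_0(X/S)=X$ is integral, every $\mathcal{L}_n(X/S)$ is irreducible, and reduced because it is smooth over the reduced (indeed integral) base $X$ via $\pi^n_0$ composed appropriately — more precisely it is smooth over $S$ and dominates the integral scheme $X$. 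Then $\mathcal{L}(X/S)=\varprojlim \mathcal{L}_n(X/S)$ is a cofiltered limit of integral schemes along dominant affine transition maps, hence integral; one can see irreducibility of the limit directly because a nonempty basic open of the limit is the preimage of a nonempty open at some finite level, and two such meet since they meet at that level.

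\textbf{Part (2).} For the irreducible components: the closed immersion $\mathcal{L}(X_i/S)\hookrightarrow \mathcal{L}(X/S)$ identifies $\mathcal{L}(X_i/S)$ with a closed subset, and the union $\bigcup_i \mathcal{L}(X_i/S)$ is all of $\mathcal{L}(X/S)$ because any point $p$ of $\mathcal{L}(X/S)$ with residue field $F$ gives an arc $\Spec F[[t]]\to X$ whose image, being irreducible, lies in some $X_i$, so $p$ lies in $\mathcal{L}(X_i/S)$; this shows the underlying-set equality, hence the closed-set equality. Since $\mathcal{L}(X/S)$ is covered by the finitely many closed sets $\mathcal{L}(X_i/S)$, each irreducible component of $\mathcal{L}(X/S)$ is contained in some $\mathcal{L}(X_i/S)$, which gives the first assertion. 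For the non-containment $\mathcal{L}(X_i/S)\not\subset \mathcal{L}(X_j/S)$ when $i\neq j$: pick a point $x$ in $X_i\setminus X_j$ (possible since $X_i\not\subset X_j$) and take the constant arc $\tau_{X/S}(x)$; it lies in $\mathcal{L}(X_i/S)$, and if it lay in $\mathcal{L}(X_j/S)$ then $\pi_0$ of it, namely $x$, would lie in $X_j$, a contradiction. The connected-components statement is handled the same way, replacing "image lies in some $X_i$" by the observation that $\Spec F[[t]]$ is connected so its image lies in a single connected component, and using that distinct connected components of $X$ are disjoint (so trivially $\mathcal{L}(Y_i/S)\not\subset\mathcal{L}(Y_j/S)$ via the constant arc again).

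\textbf{Part (3).} This is the step I expect to be the main obstacle, since purely inseparable base change is not an isomorphism on the nose. Write $k'=k^{1/q}$-type situation: it suffices to treat $k'/k$ finite purely inseparable (the general case is a limit), say $k'^{p^e}\subset k$. The morphism $h$ is integral and surjective: surjectivity and injectivity on points can be checked on $F$-points for fields $F$, where $\mathcal{L}(X\times_k k'/k')(F) = (X\times_k k')(F[[t]])$ and $\mathcal{L}(X/k)(F)=X(F[[t]])$, and the map $F\otimes_k k' \to F$ ... — the cleanest route is: the base-change morphism $X\times_k k' \to X$ is a universal homeomorphism (being integral, radicial and surjective, as any purely inseparable base change is), and I would show that $\mathcal{L}(\cdot/k)$ sends universal homeomorphisms to homeomorphisms by combining $\mathcal{L}(X\times_k k'/k') \xrightarrow{\sim}\text{(base change)} \mathcal{L}(X\times_k k'/k)\times_{?}$ ... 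Concretely: by Corollary \ref{forget}, since $k'/k$ is algebraic, the forgetful morphism $\mathcal{L}(X\times_k k'/k')_{red}\to \mathcal{L}(X\times_k k'/k)_{red}$ is an isomorphism; and $\mathcal{L}(X\times_k k'/k)\to \mathcal{L}(X/k)$ is the arc-space functor $\mathcal{L}(\cdot/k)$ applied to the universal homeomorphism $X\times_k k'\to X$, which is a closed immersion-free radicial surjection of finite type, so on each jet level $\mathcal{L}_n$ it is again radicial (Weil restriction preserves radicial morphisms: a radicial morphism is a monomorphism after base change, and $\prod_{S_n/S}$ of a monomorphism is a monomorphism) and surjective (by part (2) reasoning plus surjectivity of the truncation-zero map, or directly since every $F[[t]]$-point of $X$ lifts to $X\times_k k'$ because $F[[t]]\otimes_k k'$ is local with residue field containing all $p^e$-th roots). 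An integral, radicial, surjective morphism of schemes is a universal homeomorphism, so $h$ is in particular a homeomorphism. The delicate point to verify carefully is surjectivity at each finite jet level — this is where I would spend the most care, reducing to the statement that for a local ring $R$ over $k$ the map $X(R\otimes_k k')\to X(R)$ is surjective, which holds because $R\otimes_k k'$ is a local $R$-algebra faithfully flat (even finite free) over $R$ and $X\times_k k'\to X$ is... no: rather because an $R$-point of $X$ is a point $x\in X$ plus a local hom $\mathcal{O}_{X,x}\to R$, and this factors through $\mathcal{O}_{X\times_k k', x'}$ for the unique $x'$ above $x$ since $\mathcal{O}_{X\times_k k',x'} = (\mathcal{O}_{X,x}\otimes_k k')_{x'}$ maps to $R\otimes_k k'$. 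Assembling the pieces across all $n$ and passing to the limit gives that $h$ is a homeomorphism.
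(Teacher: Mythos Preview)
Your arguments for parts (1) and (2) are correct and essentially match the paper's: the paper also uses the local \'etale structure over affine space (via Theorem~\ref{diff}(3)) for (1), and the factorization of arcs through irreducible components plus the section $\tau$ for (2). One small slip: in (2) you invoke ``finitely many closed sets $\mathcal{L}(X_i/S)$'', but $X$ is an arbitrary $S$-scheme and may have infinitely many components; the argument is saved by taking the generic point of an irreducible component of $\mathcal{L}(X/S)$ and observing it lies in some $\mathcal{L}(X_i/S)$.

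For part (3) you are working much too hard, and your argument is not complete (the ellipses and the retracted attempts at surjectivity are symptomatic). The paper's proof is a single sentence: by the base-change isomorphism of Proposition~\ref{basic}(1), the map $h$ is identified with the projection
\[
\mathcal{L}(X/k)\times_k k' \longrightarrow \mathcal{L}(X/k),
\]
and for any scheme $Y$ over $k$ and any purely inseparable algebraic extension $k'/k$, the projection $Y\times_k k'\to Y$ is a universal homeomorphism \cite[2.4.5]{ega4.2}. That is the whole argument. Your detour through Corollary~\ref{forget}, radicial morphisms at each jet level, and lifting of $R$-points is unnecessary; moreover, appealing to Corollary~\ref{forget} here would be circular in the paper's logical order, since Proposition~\ref{prelim} appears before it. The moral is that you already had the key step (``purely inseparable base change is a universal homeomorphism'') buried in your sketch, but applied it on the wrong side of the base-change isomorphism.
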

\begin{proof}
%formulering sebag 3.4.2 is niet genoeg; bewijs wel ok
(1)  If $X$ is smooth over $S$, it admits Zariski-locally an
\'etale morphism to affine space $\A^d_S$ by
\cite[17.11.4]{ega4.4}, and
$\pi^{n}_{0}:\mathcal{L}_n(\A^d_S/S)\rightarrow \A^d_S$ is
obviously a trivial $S$-fibration with fiber $\A^{(n+1)d}_S$ for
each $n\geq 0$. Hence, it follows from integrality of $X$ and
Theorem \ref{diff}(3) that $\mathcal{L}(X/S)$ is integral.

\if false Hence, we may assume $X$ is affine. In this case, we
have natural bijections
$$\mathcal{L}(X_{red})(A)=X_{red}(A[[t]])=X(A[[t]])=\mathcal{L}(X)(A)$$
for each reduced $k$-algebra $A$, which implies that the natural
morphism $\mathcal{L}(X_{red})_{red}\rightarrow
\mathcal{L}(X)_{red}$ is an isomorphism. \fi

(2) These properties follow easily from the fact that the arc
scheme functor $\mathcal{L}(\cdot)$ respects open (resp. closed)
immersions, and the fact that for any $k$-field $F$, any morphism
$\Spec F[[t]]\rightarrow X$ factors through an irreducible
component of $X$ since $F[[t]]$ is integral. For the last
statement, use the existence of the natural sections
$\tau_{X_i/S}$ and $\tau_{X_j/S}$.

(3) The natural projection $\mathcal{L}(X/k)\times_k k'\rightarrow
\mathcal{L}(X/k)$ is a homeomorphism by \cite[2.4.5]{ega4.2}.
\end{proof}
% We will see a broad generalization of Proposition
%\ref{prelim}(1) in Corollary \ref{forsmooth}, in the
%characteristic zero case.
%\end{remark}
%(6) Assume that $\mathcal{L}(X)$ is noetherian, and that $X$ has
%dimension $\geq 1$. Let $C$ be a one-dimensional integral closed
%subscheme of $X$. Then $\mathcal{L}(C)$ is a closed subscheme of
%$\mathcal{L}(X)$, and hence noetherian. By (4) we may assume that
%$C$ is geometrically reduced. Then, there exists an affine open
%set $U$ of $X$, such that the open $\mathcal{L}(U)\simeq
%U{\times}_k\mathcal{L}(\mathbb{A}^1_k)$ which is not noetherian.
%Contradiction.

%(6) By assumption, there exists a non empty open set $U$ of X,
%contained in $Sm(X)$, and a non empty open set $V$ of $Y$,
%contained in $Sm(Y)$ such that $U$ and $V$ are isomorphic (as
%$k$-schemes). Then $\mathcal{L}(U)\simeq
%U{\times}\mathcal{L}(\mathbb{A}^1_k)$ is isomorphic to
%$\mathcal{L}(V)\simeq V{\times}\mathcal{L}(\mathbb{A}^1_k)$. As
%$\mathcal{L}(U)$ and $\mathcal{L}(V)$ are non empty, and as
%$\mathcal{L}(X)$ and $\mathcal{L}(Y)$ are irreducible by
%\cite{NS1}, Th\'eor\`eme 3.2, then $\mathcal{L}(X)$ and
%$\mathcal{L}(Y)$ are birationally equivalent.
%\end{proof}

\subsection{Arc schemes and differential
algebra}\label{sec-top-arcdiff} In this section, we'll show that
arc schemes in characteristic zero admit a natural interpretation
in terms of differential algebra. Let $(A,\delta)$ be a
differential ring. By \cite[1.19]{gillet-diff} the forgetful
functor $For$ from the category of differential
$(A,\delta)$-algebras to the category of $A$-algebras has a left
adjoint $R\mapsto R^{\infty}$. By definition, there is a
tautological morphism of $A$-algebras $R\rightarrow
For(R^{\infty})$. In order to avoid confusion we adopt the
following notation: if $B$ and $C$ are differential
$(A,\delta)$-algebras, we'll write $Hom_A^{\delta}(B,C)$ for the
set of morphisms of differential $(A,\delta)$-algebras and
$Hom_A(B,C)$ for the set of morphisms of $A$-algebras
$Hom_A(For(B),For(C))$.

\begin{prop}\label{right}
Assume that $A$ is a $\Q$-algebra endowed with the trivial
derivation. Then the forgetful functor $For$ has a right adjoint
$R\mapsto R_{\infty}$. It maps an $A$-algebra $R$ to the
$A$-algebra $R_{\infty}=R[[t]]$ endowed with the usual derivation
$\delta=\partial_t$ with respect to the parameter $t$.
\end{prop}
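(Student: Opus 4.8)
The plan is to verify directly that $R \mapsto R[[t]]$, equipped with $\delta = \partial_t$, has the universal property of a right adjoint to the forgetful functor $For$ from differential $(A,\delta)$-algebras to $A$-algebras, where $A$ is a $\Q$-algebra with trivial derivation. Concretely, for a differential $(A,\delta)$-algebra $(B,\delta_B)$ and an $A$-algebra $R$, I must construct a bijection, natural in both arguments,
$$ Hom^{\delta}_A\big(B,\, R[[t]]\big) \;\xrightarrow{\ \sim\ }\; Hom_A\big(B,\, R\big), $$
where the target is morphisms of $A$-algebras $B \to R$ (forgetting the derivation on $B$). The map from left to right is postcomposition with the augmentation $\varepsilon\colon R[[t]] \to R$, $\sum r_i t^i \mapsto r_0$ (evaluation at $t=0$), which is an $A$-algebra homomorphism since $A$ maps into the constants.

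First I would check this is well-defined and describe the inverse: given an $A$-algebra map $f\colon B \to R$, define $\widetilde f\colon B \to R[[t]]$ by the Taylor-type formula
$$ \widetilde f(b) \;=\; \sum_{i\geq 0} \frac{1}{i!}\, f\big(\delta_B^i(b)\big)\, t^i. $$
Here the denominators $i!$ are invertible because $A$, hence $R$, is a $\Q$-algebra — this is exactly where the characteristic-zero hypothesis enters. I would verify: (i) $\widetilde f$ is additive and $A$-linear, which is immediate since $f$ and all $\delta_B^i$ are; (ii) $\widetilde f$ is multiplicative, which follows from the Leibniz rule $\delta_B^i(bc) = \sum_{j+l=i}\binom{i}{j}\delta_B^j(b)\,\delta_B^l(c)$ together with the identity $\frac{1}{i!}\binom{i}{j} = \frac{1}{j!}\cdot\frac{1}{l!}$, so that the coefficients match the Cauchy product in $R[[t]]$; (iii) $\widetilde f$ commutes with the derivations, i.e.\ $\partial_t \circ \widetilde f = \widetilde f \circ \delta_B$, which is the elementary computation $\partial_t\!\left(\sum \frac{1}{i!}f(\delta_B^i b)t^i\right) = \sum_{i\geq 1}\frac{1}{(i-1)!}f(\delta_B^i b)t^{i-1} = \sum_{j\geq 0}\frac{1}{j!}f(\delta_B^{j+1}b)t^j = \widetilde{f}(\delta_B b)$.

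Next I would check the two constructions are mutually inverse. Starting from $f$, applying $\varepsilon$ to $\widetilde f$ recovers the constant term $f(\delta_B^0 b) = f(b)$, so $\varepsilon\circ\widetilde f = f$. Conversely, starting from a differential morphism $g\colon B \to R[[t]]$ with $\varepsilon\circ g = f$, I must show $g = \widetilde f$: writing $g(b) = \sum_i g_i(b)\,t^i$, the condition $\partial_t g = g\,\delta_B$ gives the recursion $(i+1)g_{i+1}(b) = g_i(\delta_B b)$, whence by induction $g_i(b) = \frac{1}{i!}g_0(\delta_B^i b) = \frac{1}{i!}f(\delta_B^i b)$, since $g_0 = \varepsilon\circ g = f$. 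Finally, naturality in $R$ (in $A$-algebra maps $R \to R'$, acting coefficientwise on power series) and in $B$ (precomposition by differential morphisms) is routine from the explicit formulas. The only genuinely substantive point — and the place where I expect the bookkeeping to be densest rather than truly difficult — is the multiplicativity check (ii), i.e.\ matching the binomial-weighted Leibniz expansion against the convolution product; everything else is formal. I should also remark that one may instead deduce the result abstractly: $R[[t]]$ is the $t$-adic completion of the polynomial ring $R[t]$, and on $R[t]$ the claimed adjunction is the statement that $B \mapsto B/\!\sim$ (quotient identifying the derivation with formal $d/dt$) is left adjoint, but the direct verification above is cleaner and self-contained.
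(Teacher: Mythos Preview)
Your proof is correct and follows exactly the same approach as the paper: define the unit via the Taylor-type formula $b\mapsto \sum_{i\geq 0}\frac{1}{i!}f(\delta_B^i b)\,t^i$ and the counit via reduction modulo $t$, then check these are mutually inverse. You have in fact supplied more detail than the paper, which simply writes down the two maps and leaves the verification that they are inverse bijections to the reader.
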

\begin{proof}
A morphism of $A$-algebras $R\rightarrow S$ defines a morphism of
differential $A$-algebras $(R[[t]],\partial_t)\rightarrow
(S[[t]],\partial_t)$ in the obvious way, so the functor
$(\cdot)_{\infty}$ is well-defined. Let us check that it is right
adjoint to $For$. Let $(B,\delta)$ be any differential
$A$-algebra, and $\varphi:B\rightarrow R$ a morphism of
$A$-algebras. Then $\varphi$ defines a morphism of differential
$A$-algebras
$$B\rightarrow R_{\infty}:b\mapsto \sum_{i\geq 0}\frac{\varphi(\delta^{(i)}b)}{i!}t^i$$
Conversely, a morphism of differential $A$-algebras
$(B,\delta)\rightarrow R_{\infty}$ defines a morphism of
$A$-algebras $B\rightarrow R$ by composition with reduction modulo
$t$, and one checks that these correspondences define mutually
inverse bijections
$$Hom_A(B,R)=Hom_A^{\delta}((B,\delta),R_{\infty})$$
\end{proof}

\begin{cor}\label{arc-diff}
Let $A$ be a $\Q$-algebra endowed with the trivial derivation. For
any $A$-algebra $R$, the $R$-scheme $\Spec R^{\infty}$ is
canonically isomorphic to the arc scheme $\mathcal{L}(\Spec R/A)$.
\end{cor}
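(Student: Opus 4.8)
The plan is to combine the universal property of the functor $R \mapsto R^\infty$ (left adjoint to the forgetful functor $For$, per Gillet) with the right adjoint $R \mapsto R_\infty = R[[t]]$ constructed in Proposition \ref{right}, and to translate the resulting adjunction bijections into a statement about representable functors on the category of $R$-algebras. Concretely: for any $R$-algebra $C$ (which we equip with the trivial derivation, so that it is a differential $A$-algebra), I want to produce functorial bijections
\begin{equation*}
Hom_R(R^\infty, C) \;=\; Hom_A^{\delta}(R^\infty, C) \;=\; Hom_A(R, For(C)) \;=\; Hom_R(\Spec C, \Spec R)\cdots
\end{equation*}
wait — the first equality needs care, and that is exactly the point of the argument. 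Let me lay out the steps.

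First I would fix an $R$-algebra $C$ and observe that since $C$ carries the trivial derivation, a morphism of $A$-algebras $R^\infty \to C$ is the \emph{same thing} as a morphism of differential $A$-algebras $R^\infty \to C$: indeed, by the universal (left-adjoint) property of $R^\infty$, $Hom_A^\delta(R^\infty, C) = Hom_A(R, For(C))$, and I claim this coincides with $Hom_R(For(R^\infty), C)$. The inclusion $Hom_R(For(R^\infty),C)\hookrightarrow Hom_A(For(R^\infty),C)$ followed by restriction along the tautological map $R \to For(R^\infty)$ lands in $Hom_A(R, For(C))$; conversely, the adjunction tells us every $A$-algebra map $R \to For(C)$ extends uniquely to a differential map $R^\infty \to C$, and because the $R$-algebra structure on $R^\infty$ is precisely the one induced by $R \to For(R^\infty)$, this extension is automatically $R$-linear. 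So the key bookkeeping step is: $Hom_R(R^\infty, C) = Hom_A(R, C)$, naturally in $C$. Then I would invoke Proposition \ref{right}: $Hom_A(R, C) = Hom_A^\delta(R_\infty, C_\infty)$? No — more directly, the right-adjoint property gives $Hom_A(R,C) = Hom_A(For(\,?\,))$... actually the cleanest route avoids the right adjoint entirely: $Hom_A(R, C)$ is by definition $(\Spec R)(C) = Hom_A(\Spec C, \Spec R)$.

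Next, I would identify the target. We have $Hom_R(R^\infty, C) = Hom_R(\Spec C, \Spec R^\infty) = (\Spec R^\infty)(C)$, the $C$-points of $\Spec R^\infty$ as an $R$-scheme. On the other side, recall from Section \ref{sec-arc-def} that when $X$ is affine the arc scheme represents $A \mapsto X(A[[t]])$ on local $A$-algebras, and more precisely $\mathcal{L}(\Spec R/A)(C) = \Spec R(C[[t]]) = Hom_A(R, C[[t]])$ for $C$ an $R$-algebra (using that $\Spec R$ is affine, so the completion map (\ref{form-alg}) is a bijection). By Proposition \ref{right}, $Hom_A(R, C[[t]]) = Hom_A(R, For(C_\infty)) = Hom_A^\delta(R^\infty, C_\infty)$, and unwinding, a differential $A$-map $R^\infty \to C[[t]]$ is the same as an $A$-map $R^\infty \to C$ composed appropriately... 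The point is that both $\mathcal{L}(\Spec R/A)$ and $\Spec R^\infty$ represent, on $R$-algebras $C$, the functor $C \mapsto Hom_A(R, C[[t]])$ — the former by the explicit description of arc schemes of affine schemes, the latter by chaining the Gillet adjunction with Proposition \ref{right}. By Yoneda, the two $R$-schemes are canonically isomorphic. I would then remark that this isomorphism is compatible with the $R$-algebra structures because both functorial identifications are natural transformations of functors on $R$-algebras, hence the isomorphism is one of $R$-schemes.

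The main obstacle I anticipate is not the adjunction formalism but the representability subtlety flagged in the excerpt: $\mathcal{L}(\Spec R/A)$ a priori represents $C \mapsto Hom_A(\Spf C[[t]], \Spec R)$, and one must use that $\Spec R$ is \emph{affine} to replace $\Spf$ by $\Spec$ — i.e. that (\ref{form-alg}) is bijective — so that $\mathcal{L}(\Spec R/A)$ genuinely represents $C \mapsto Hom_A(R, C[[t]])$ on \emph{all} $A$-algebras $C$, not merely local ones. Once that is in hand, the identification $Hom_A(R, C[[t]]) = Hom_A^\delta(R^\infty, C[[t]]) = Hom_R(R^\infty, C)$, natural in $C$, is exactly the composite of Proposition \ref{right} and Gillet's left adjoint, and Yoneda finishes it. A secondary point requiring a line of justification is that the hypothesis "$A$ is a $\Q$-algebra with trivial derivation" is what makes Proposition \ref{right} applicable here (so that $R_\infty = R[[t]]$ with $\partial_t$ is a right adjoint), and that all the $C$'s appearing are tacitly given the trivial derivation as differential $A$-algebras.
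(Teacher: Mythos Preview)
Your proposal is correct and follows exactly the paper's argument: compose the left adjunction $(\cdot)^{\infty}\dashv For$ with the right adjunction $For\dashv(\cdot)_{\infty}$ of Proposition~\ref{right} to obtain natural bijections $Hom_A(For(R^{\infty}),S)=Hom_A^{\delta}(R^{\infty},S_{\infty})=Hom_A(R,S[[t]])$ for every $A$-algebra $S$, then invoke Yoneda and verify that the $R$-structures match. One clarification: your exploratory detour through the trivial derivation on $C$ is indeed a dead end (an $A$-algebra map $R^{\infty}\to C$ need not be a differential map, so $Hom_A(For(R^{\infty}),C)\neq Hom_A^{\delta}(R^{\infty},C)$ in general), and the Yoneda step is cleanest when run on $A$-algebras first with the $R$-structure checked afterward, precisely as the paper does.
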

\begin{proof}
By Proposition \ref{right} and the fact that $(\cdot)^{\infty}$ is
left adjoint to $For$, we have for any $R$-algebra $S$ natural
bijections
\begin{eqnarray*}
Hom_A(For(R^{\infty}),S)&=&Hom_A^{\delta}(R^{\infty},S_\infty)
\\ &=&Hom_A(R,S[[t]])
\end{eqnarray*}
so there is a natural isomorphism of $A$-schemes
$\mathcal{L}(\Spec R/A)\cong \Spec R^{\infty}$, and one easily
checks that it is an isomorphism of $R$-schemes.
\end{proof}
\begin{cor}\label{forsmooth}
If $S$ is a $\Q$-scheme and $X$ is a formally smooth integral
$S$-scheme, then $\mathcal{L}(X/S)$ is integral.
\end{cor}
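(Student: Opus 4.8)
The plan is to reduce to the affine case — where the arc scheme is the spectrum of a differential algebra — and to dispatch connectedness by a separate reparametrization argument. For an affine open $U=\Spec R\subseteq X$ whose image in $S$ lies in an affine open $\Spec A\subseteq S$, the scheme $\mathcal{L}(U/S)=(\pi_0)^{-1}(U)$ is open in $\mathcal{L}(X/S)$; and since $\Spec A\hookrightarrow S$ is formally unramified, Theorem \ref{diff}(1) gives $\mathcal{L}_m(\Spec A/S)\cong\Spec A$ for all $m\in\N\cup\{\infty\}$, whence Proposition \ref{basic}(7) identifies $\mathcal{L}(U/S)$ with $\mathcal{L}(U/\Spec A)$. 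Such $U$ cover $X$, $R$ is a domain, and formal smoothness restricts to open subschemes, so it suffices to show: (i) $\mathcal{L}(\Spec R/\Spec A)$ is integral whenever $A$ is a $\Q$-algebra and $R$ a formally smooth $A$-domain; and (ii) $\mathcal{L}(X/S)$ is connected. Indeed, given (i), $\mathcal{L}(X/S)$ is reduced with open irreducible components, hence a disjoint union of integral schemes, and (ii) leaves only one of them.

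For (ii), I would use the reparametrization morphism $\mu\colon\A^1_S\times_S\mathcal{L}(X/S)\to\mathcal{L}(X/S)$ given on points — via the description of $\mathcal{L}(X/S)$ as the functor $A\mapsto\mathrm{Hom}_S(\Spf A[[t]],X)$ — by $(\lambda,\varphi)\mapsto\varphi\circ\sigma_\lambda$, where $\sigma_\lambda$ is the endomorphism of $\Spf A[[t]]$ induced by $t\mapsto\lambda t$. It commutes with $\pi_0$ and satisfies $\mu(1,-)=\mathrm{id}$ and $\mu(0,-)=\tau_{X/S}\circ\pi_0$. Hence, for any point $v$ of $\mathcal{L}(X/S)$, the image of $\A^1_{\kappa(v)}\to\mathcal{L}(X/S)$, $\lambda\mapsto\mu(\lambda,v)$, is an irreducible subset containing both $v$ and the constant arc $\tau_{X/S}(\pi_0(v))$; since $\tau_{X/S}$ is a closed immersion and $X$ is connected, $\tau_{X/S}(X)$ lies in a single connected component of $\mathcal{L}(X/S)$, and by the above so does every point. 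Thus $\mathcal{L}(X/S)$ is connected.

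For (i), equip $A$ with the trivial derivation; Corollary \ref{arc-diff} gives $\mathcal{L}(\Spec R/A)\cong\Spec R^\infty$, and writing $R^{(n)}$ for the coordinate ring of $\mathcal{L}_n(\Spec R/A)$ we have $R^\infty=\varinjlim_n R^{(n)}$, the transition maps being induced by the truncations $\pi^{n+1}_n$. By Theorem \ref{diff}(2)(a) these truncations are surjective, so once each $R^{(n)}$ is known to be a domain, each map $R^{(n)}\to R^{(n+1)}$ is injective and $R^\infty$ — a filtered colimit of domains along injective maps — is a domain, as required. So everything reduces to the integrality of the jet schemes $\mathcal{L}_n(\Spec R/A)$, and this is the main point.

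I would prove it by induction on $n$, the case $n=0$ being trivial. Since $R/A$ is formally smooth, $\Omega_{R/A}$ is a projective $R$-module, so for any $R$-algebra $B$ the module $\Omega_{R/A}\otimes_R B$ is projective; its symmetric algebra is then flat over $B$ (symmetric powers of a projective module are direct summands of free ones), hence, when $B$ is a domain, torsion-free over $B$ and a subring of $\mathrm{Sym}(\Omega_{R/A}\otimes_R\mathrm{Frac}(B))$, a polynomial ring over a field — so it is a domain. As $\mathcal{L}_1(\Spec R/A)\cong\mathbf{Spec}(\mathrm{Sym}_R\Omega_{R/A})$ (recalled in the proof of Theorem \ref{diff}(1)), this settles $n=1$. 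For the inductive step, the classical deformation description of jet schemes presents $\pi^{n+1}_n\colon\mathcal{L}_{n+1}(\Spec R/A)\to\mathcal{L}_n(\Spec R/A)$ as a pseudo-torsor under $\mathbf{Spec}_{\mathcal{L}_n}(\mathrm{Sym}_{\mathcal{O}_{\mathcal{L}_n}}((\pi^n_0)^*\Omega_{R/A}))$ whose obstruction to admitting a section lies in an $\mathrm{Ext}^1$-group with first argument a pullback of $\Omega_{R/A}$; projectivity of $\Omega_{R/A}$ kills this obstruction, so the pseudo-torsor is trivial, $R^{(n+1)}\cong\mathrm{Sym}_{R^{(n)}}(\Omega_{R/A}\otimes_R R^{(n)})$, and this is a domain because $R^{(n)}$ is. This completes the induction, hence the proof. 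The one genuinely laborious ingredient is carrying out this jet deformation computation without any finiteness hypotheses (together with the standard fact that formally smooth morphisms have projective modules of differentials); that is where I expect the real work to lie.
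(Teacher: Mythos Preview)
Your argument is correct and, at its heart, coincides with the paper's: reduce to the affine case and use that $\Omega_{R/A}$ is projective when $R/A$ is formally smooth. The paper simply outsources the affine computation to \cite[1.27]{gillet-diff}, whereas you unfold it as an induction on jet levels; the torsor description of $\pi^{n+1}_n$ and its trivialization are exactly the content of that reference. One simplification in your inductive step: you do not need to locate an obstruction in an $\mathrm{Ext}^1$-group. Formal smoothness of $R/A$ \emph{directly} provides a lift of the universal $n$-jet $R\to R^{(n)}[t]/(t^{n+1})$ to $R\to R^{(n)}[t]/(t^{n+2})$, i.e.\ a section of $\pi^{n+1}_n$; the torsor is then trivial on the nose, giving $R^{(n+1)}\cong\mathrm{Sym}_{R^{(n)}}(\Omega_{R/A}\otimes_R R^{(n)})$ without further cohomological input.

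Your connectedness argument (ii), while correct and rather elegant, is not needed. Since $X$ is irreducible, any two affine opens $U,U'\subset X$ meet, and then $\mathcal{L}(U/S)\cap\mathcal{L}(U'/S)\supset\mathcal{L}(U\cap U'/S)\ni\tau_{X/S}(x)$ for any $x\in U\cap U'$. An open cover of $\mathcal{L}(X/S)$ by pairwise-intersecting irreducible opens already forces irreducibility (each $\mathcal{L}(U/S)$ is dense in the same irreducible closed set, namely the closure of any one of them). This is why the paper can pass to the affine case in one sentence. Your reparametrization trick is a pleasant alternative, but it is extra.
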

\begin{proof}
We may assume that $X$ and $S$ are affine, say $X=\Spec R$ and
$S=\Spec A$, and it suffices to show that $R^{\infty}$ is a
domain. This follows from the fact that $\Omega^1_{R/A}$ is a
projective $R$-module and the formal properties of the functor
$(\cdot)^{\infty}$; see \cite[1.27]{gillet-diff}.
\end{proof}

\subsection{Kolchin's Irreducibility Theorem for arbitrary
$\Q$-schemes}\label{sec-top-Qkol} In \cite[Thm.\,1]{gillet-diff}
it is shown that, if $(k,\delta)$ is a differential field of
characteristic zero and $R$ is an integral $k$-algebra, then
$R^{\infty}$ has a unique minimal prime ideal. This is a very
general form of Kolchin's Irreducibility Theorem. In view of
Corollary \ref{arc-diff}, it implies that the arc scheme
$\mathcal{L}(X/k)$ of an arbitrary irreducible scheme $X$ over a
field $k$ of characteristic zero is again irreducible. We will
give a short geometric proof of this result.

\begin{lemma}\label{valuation}
If $k$ is a field of characteristic zero, and $V$ a valuation ring
containing $k$, then $\mathcal{L}(\Spec V/k)$ is integral.
\end{lemma}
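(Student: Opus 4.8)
The statement claims that for $k$ a field of characteristic zero and $V$ a valuation ring containing $k$, the arc scheme $\mathcal{L}(\Spec V/k)$ is integral. My plan is to reduce this to the formally smooth case already handled in Corollary \ref{forsmooth}, by exploiting the fact that valuation rings are filtered colimits (directed unions) of smaller, more tractable subrings, together with the fact that arc schemes commute with such limits.

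First I would reduce to the case where $V$ is finitely generated over $k$ as a \emph{field of fractions}, i.e.\ write $V$ as a filtered union $V = \varinjlim_{i} V_i$ of its sub-$k$-algebras $V_i$ that are essentially of finite type over $k$ (localizations of finitely generated $k$-algebras). Since $V$ is a valuation ring, each such $V_i$ is a localization of a finitely generated $k$-algebra at a prime, and the valuation of $V$ restricts to a valuation on $\mathrm{Frac}(V_i)$; one can even arrange the $V_i$ to be the valuation rings of these restricted valuations, which are localizations of finite-type $k$-algebras dominated by $V$. The arc scheme functor commutes with filtered projective limits of schemes with affine transition maps in the sense that $\mathcal{L}(\Spec V/k) = \varprojlim_i \mathcal{L}(\Spec V_i/k)$ (each $\mathcal{L}(\Spec(\cdot)/k)$ being computed via $R \mapsto Hom_k(R, (\cdot)[[t]])$ and $Hom$ out of a finitely presented ring commuting with filtered colimits). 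A filtered projective limit of integral schemes with dominant (hence, here, flat affine) transition morphisms is integral, so it suffices to prove the statement for $V$ a valuation ring that is a localization of a finitely generated $k$-algebra.

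Now for such a $V$: since $\mathrm{char}\,k = 0$, the fraction field $K = \mathrm{Frac}(V)$ is separably generated over $k$, and $V$, being a regular local ring essentially of finite type over a characteristic-zero field, is formally smooth over $k$ — or, more carefully, a valuation ring of a finitely generated field extension in characteristic zero is a filtered union of smooth $k$-algebras (by a further application of the colimit argument, replacing $V_i$ by smooth models dominated by $V$ using generic smoothness / resolution-free arguments available in characteristic zero: any localization of a finite-type $k$-algebra is a filtered colimit of smooth $k$-algebras after inverting enough elements, since $Sm$ is dense). Applying the colimit reduction once more, I reduce to $V$ replaced by a smooth integral $k$-algebra $R$, and then Corollary \ref{forsmooth} (or Proposition \ref{prelim}(1)) gives that $\mathcal{L}(\Spec R/k)$ is integral. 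Passing back up through the two filtered limits preserves integrality.

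**Main obstacle.** The delicate point is the claim that a valuation ring essentially of finite type over $k$ (equivalently, any local ring of a finitely generated field extension of $k$ in characteristic zero) is a filtered union of \emph{smooth} $k$-subalgebras with $V$ dominating each — and that the transition maps are nice enough (flat, or at least dominant with integral fibers on arcs) to preserve integrality in the limit. The cleanest route is: use that in characteristic zero every finitely generated field extension is separably generated, pick a separating transcendence basis, and build smooth affine models over $k$ inside $V$; density of the smooth locus then guarantees one can always enlarge while staying dominated by the valuation ring $V$. One must also verify that $\mathcal{L}(\Spec(\cdot)/k)$ genuinely commutes with the relevant filtered colimits of rings — this follows since for a \emph{local} ring $A[[t]]$ we have the identification $\mathcal{L}(\Spec R/k)(A) = R(A[[t]]) = Hom_k(R, A[[t]])$ from the excerpt, and when $R$ is finitely presented this $Hom$ commutes with filtered colimits in $R$; the limit scheme identification then follows from \cite[8.2.9]{ega4.3}-type arguments as already invoked for the Zariski topology on arc schemes.
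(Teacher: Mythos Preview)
Your approach is essentially the same as the paper's: write $V$ as a filtered colimit of smooth $k$-domains, use that the arc-algebra functor commutes with filtered colimits, and conclude since a filtered colimit of domains is a domain. The paper executes this more cleanly in two respects. First, rather than your two-step reduction, it directly invokes Zariski's Uniformization Theorem to write $V$ as a direct limit of smooth $k$-domains $B_i$ in one stroke. Second, for colimit preservation, the paper observes that the functor $\mathcal{G}\colon B\mapsto \Gamma(\mathcal{L}(\Spec B/k))$ is left adjoint to $C\mapsto C[[t]]$ (this is Proposition~\ref{right}/Corollary~\ref{arc-diff}), so it automatically preserves colimits; your functor-of-points argument for this is correct but the ``finitely presented'' hypothesis you mention is irrelevant, since $Hom_k(\varinjlim V_i,A[[t]])=\varprojlim Hom_k(V_i,A[[t]])$ holds unconditionally.

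One genuine inaccuracy in your write-up: you assert that the valuation rings of the restricted valuations on the subfields $\mathrm{Frac}(V_i)$ are ``localizations of finite-type $k$-algebras dominated by $V$''. This is false in general---if $V$ has rank $\geq 2$, the restriction to a finitely generated subfield can still have rank $\geq 2$, hence be non-Noetherian and not essentially of finite type. So your intermediate reduction ``it suffices to prove the statement for $V$ a valuation ring that is a localization of a finitely generated $k$-algebra'' does not go through as stated. The fix is exactly what the paper does: bypass this step and go straight to smooth finite-type domains via Zariski uniformization (which is the nontrivial input you are implicitly sketching in your ``second step'').
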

\begin{proof}
Consider the endofunctor $\mathcal{G}$ on the category of
$k$-algebras which sends a $k$-algebra $B$ to the $k$-algebra of
global sections of the $k$-scheme $\mathcal{L}(\Spec B/k)$. It
admits a right adjoint, sending a $k$-algebra $C$ to the
$k$-algebra $C[[t]]$~:  use either the fact that the functor
$\mathcal{G}$ is isomorphic to $For\circ(\cdot)^{\infty}$ by
Corollary \ref{arc-diff}, or the bijectivity of the completion map
(\ref{form-alg}) in the affine case.

Now we can follow the arguments in \cite[1.38]{gillet-diff}. As a
left adjoint, $\mathcal{G}$ commutes with direct limits. By
Zariski's Uniformization Theorem, $V$ can be written as a direct
limit of domains $B_i$ which are smooth over $k$. But
$\mathcal{G}(B_i)$ is a domain by Proposition \ref{prelim}(1), and
a direct limit of domains is again a domain.
\end{proof}

 If
$X\rightarrow S$ is a morphism of schemes, $K$ a $S$-field and
$\varphi:\Spec K[[t]]\rightarrow X$ an element of
$\mathcal{L}(X/S)(K)$, then we denote by $\varphi(0)$ the image of
the closed point of $\Spec K[[t]]$ in $X$, and by $\varphi(\eta)$
the image of the generic point of $\Spec K[[t]]$. Note that
$\varphi(0)=\pi_0(\varphi)$.

\begin{theorem}\label{Qkol} Let $X\rightarrow S$ be a morphism of schemes.
\begin{enumerate}%\renewcommand{\theenumi}{(\arabic{enumi})}
\item For any point $\varphi$ on $\mathcal{L}(X/S)$, there exists
a point $\psi$ in $\mathcal{L}(\varphi(\eta)/S)\subset
\mathcal{L}(X/S)$ such that $\varphi$ belongs to the Zariski
closure of $\psi$ in $\mathcal{L}(X/S)$.

\item If $k$ is a field of characteristic zero and $S=\Spec k$,
then the map
$$Y\mapsto \mathcal{L}(Y/k)_{red}$$ defines a bijective
correspondence between the irreducible components $Y$ of $X$ and
the irreducible components of $\mathcal{L}(X/k)$.

In particular, if $X$ is irreducible, then $\mathcal{L}(X/k)$ is
irreducible. Moreover, if $\xi$ is the generic point of $X$, then
$\mathcal{L}(\xi/k)$ is dense in $\mathcal{L}(X/k)$.
\end{enumerate}
\end{theorem}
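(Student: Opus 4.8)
The plan is to prove part (1) first and then deduce part (2) as a formal consequence, using Lemma~\ref{valuation} as the key geometric input. For part (1), fix a point $\varphi$ on $\mathcal{L}(X/S)$ with residue field $K$; by the discussion following the definition of the arc scheme, $\varphi$ corresponds to a morphism $\Spec K[[t]]\to X$, and I may regard $\varphi$ as a $K$-valued point. The generic point $\varphi(\eta)$ has some residue field, call it $L$, which is a subfield of the fraction field $K((t))$ of $K[[t]]$; the arc factors through $\Spec \mathcal{O}$ where $\mathcal{O}=\mathcal{O}_{X,\varphi(\eta)}\to K[[t]]$, and the composite $L\hookrightarrow K[[t]]\subset K((t))$ together with the $t$-adic valuation on $K((t))$ gives a valuation ring $V$ of $L$ dominating the image of $\mathcal{O}$. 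First I would set up this valuation ring $V$ carefully: it is a valuation ring containing $k=$ (the field case) or, more precisely, we reduce to $S=\Spec k$ with $k$ of characteristic zero for part (2), but part (1) should be stated so that the relevant base is the residue field situation. The point is that $\varphi$ defines a point of $\mathcal{L}(\Spec V/k)$ lying over the arc into $\Spec V\hookrightarrow \Spec L\to X$, hence a point $\psi$ of $\mathcal{L}(\varphi(\eta)/S)$.

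The heart of the argument is then: $\mathcal{L}(\Spec V/k)$ is \emph{integral} by Lemma~\ref{valuation}, so it has a unique generic point $\psi$, and every point of $\mathcal{L}(\Spec V/k)$ — in particular the one we just constructed from $\varphi$ — lies in the closure of $\psi$. Pushing forward along the (closed immersion induced) map $\mathcal{L}(\Spec V/k)\to \mathcal{L}(\Spec L/k)=\mathcal{L}(\varphi(\eta)/k)\hookrightarrow \mathcal{L}(X/k)$, the image of $\psi$ is the sought-after point, and $\varphi$ lies in its Zariski closure because closures are preserved by continuous maps and $\mathcal{L}(\Spec V/k)\to\mathcal{L}(X/k)$ sends our chosen lift of $\varphi$ to $\varphi$ itself. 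I expect the main obstacle to be verifying that the arc $\varphi$ genuinely lifts to an arc valued in $\Spec V$ rather than merely in $\Spec L$: one needs the image of $\mathcal{O}_{X,\varphi(\eta)}$ in $K[[t]]$ to land inside $V$, which is exactly the statement that the $t$-adic valuation on $K((t))$, restricted to $L$, is nonnegative on that image — and this holds because the arc map $\mathcal{O}_{X,\varphi(0)}\to K[[t]]$ is defined and $\mathcal{O}_{X,\varphi(\eta)}$ is a localization of it, so its elements are quotients $a/b$ with $a,b\in K[[t]]$ and $b$ a unit in the generic fibre; the subtlety is that $b$ need not be a unit in $K[[t]]$, so $V$ must be chosen as the valuation ring of $L$ induced by the $t$-adic valuation, not as $K[[t]]\cap L$ naively — but any element of $\mathcal{O}_{X,\varphi(\eta)}$ still has nonnegative valuation since it extends to a regular function on a neighbourhood through which the arc factors. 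This bookkeeping is the only delicate point; the rest is formal.

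For part (2), assume $S=\Spec k$ with $k$ of characteristic zero. By Proposition~\ref{prelim}(2) the closed subschemes $\mathcal{L}(Y/k)$ for $Y$ ranging over the irreducible components of $X$ cover $\mathcal{L}(X/k)$, no one is contained in another, and each is a union of irreducible components; so it suffices to show each $\mathcal{L}(Y/k)$ is irreducible. Replacing $X$ by $Y$ (with its reduced structure), we may assume $X$ is integral with generic point $\xi$. By Proposition~\ref{basic}(5) we may pass to $X_{red}$, so this costs nothing. Now apply part (1): every point $\varphi$ of $\mathcal{L}(X/k)$ lies in the closure of a point $\psi\in\mathcal{L}(\varphi(\eta)/k)$, and since $X$ is integral, every nonempty open — hence every point $\varphi(\eta)$ — specializes to, or rather generizes from, $\xi$; more precisely $\mathcal{L}(\varphi(\eta)/k)\subset \mathcal{L}(X/k)$ and $\varphi(\eta)$ is a point of $X$, so $\mathcal{L}(\Spec\mathcal{O}_{X,\varphi(\eta)}/k)\to \mathcal{L}(X/k)$ has image containing both $\psi$ and (arcs through) the generic point. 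The cleanest route: it suffices to show $\mathcal{L}(\xi/k)$ is dense, equivalently that every $\varphi$ specializes from an arc valued in the function field $k(X)$. Given $\varphi$, part~(1) produces $\psi$ valued in a subfield $L=\kappa(\varphi(\eta))$ of $k(X)$ with $\varphi\in\overline{\{\psi\}}$; but $\psi$ itself lies in the closure of a point of $\mathcal{L}(\Spec k(X)/k)$ by another application of part~(1) (or directly, since $\mathcal{L}(\Spec k(X)/k)\to\mathcal{L}(\Spec L/k)$ has dense image by Corollary~\ref{forsmooth} applied to the smooth morphism $\Spec k(X)\to\Spec L$ — both are formally smooth over $k$ in characteristic zero, and $\mathcal{L}(\Spec k(X)/k)$ is integral). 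Chaining the two specializations shows $\varphi$ lies in $\overline{\mathcal{L}(\xi/k)}$, proving density and irreducibility; the bijective correspondence then follows, with inverse sending an irreducible component $Z$ of $\mathcal{L}(X/k)$ to the closure of $\pi_0(\eta_Z)$ in $X$.
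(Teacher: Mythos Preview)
Your proposal has two genuine gaps.

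\textbf{Part (1).} Your argument hinges on Lemma~\ref{valuation}, which requires characteristic zero. But part~(1) is stated for an arbitrary morphism of schemes $X\to S$, with no hypothesis on the characteristic; it is later applied (e.g.\ in Theorem~\ref{gen case}) over arbitrary fields. So you have not proved part~(1) as stated. The paper's proof avoids this entirely by a characteristic-free ``wedge'' trick: compose the arc $\varphi:\Spec K[[t]]\to X$ with $t\mapsto u+v$ to obtain an element of $X(K[[u,v]])$, i.e.\ an arc $\zeta:\Spec K[[u]]\to\mathcal{L}(X/S)$ with $\zeta(0)=\varphi$, and check that $\psi:=\zeta(\eta)$ lies in $\mathcal{L}(\varphi(\eta)/S)$. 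This uses no integrality of any auxiliary arc scheme, only that $\zeta(0)$ lies in the closure of $\zeta(\eta)$.

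\textbf{Part (2).} You write that $L=\kappa(\varphi(\eta))$ is ``a subfield of $k(X)$'' and invoke a morphism $\Spec k(X)\to\Spec L$. Neither exists in general: $L$ is a \emph{residue} field of the integral scheme $X$, hence a quotient of the local ring $\mathcal{O}_{X,\varphi(\eta)}$, while $k(X)$ is its fraction field. There is no ring map $L\to k(X)$ unless $\varphi(\eta)=\xi$. Consequently your ``chaining'' step, where $\psi\in\mathcal{L}(\varphi(\eta)/k)$ is supposed to specialize from a point of $\mathcal{L}(\xi/k)$ via a map $\mathcal{L}(\Spec k(X)/k)\to\mathcal{L}(\Spec L/k)$, does not go through; and reapplying part~(1) to $\psi$ gives nothing new since $\psi(\eta)=\varphi(\eta)$.

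The paper fixes this by choosing the valuation ring at the right stage and with the right fraction field: after using part~(1) to reduce to $\psi\in\mathcal{L}(x/k)$ with $x=\varphi(\eta)$, one dominates $\mathcal{O}_{X,x}$ by a valuation ring $V$ of $k(X)$ (not of $L$). The closed point $y$ of $\Spec V$ maps to $x$, and in characteristic zero $\kappa(y)/\kappa(x)$ is separable, so by Theorem~\ref{diff}(4a) the map $\mathcal{L}(\Spec\kappa(y)/k)\to\mathcal{L}(x/k)$ is surjective and $\psi$ lifts to $\mathcal{L}(\Spec V/k)$. Now Lemma~\ref{valuation} applies to $V$ and its generic point lies in $\mathcal{L}(\Spec k(X)/k)=\mathcal{L}(\xi/k)$, finishing the argument. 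Your valuation-ring instinct is correct, but the ring must have fraction field $k(X)$, and the lifting of $\psi$ requires the separability input from Theorem~\ref{diff}(4a), which you did not invoke.
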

\begin{proof}
To prove (1) one can copy the arguments in the first part of the
proof of \cite[2.12]{ishii-kollar}. We repeat them here for the
reader's convenience. Let $\varphi$ be any point of
$\mathcal{L}(X/S)$, with residue field $K$. It corresponds to a
morphism of $S$-schemes $\varphi:\Spec K[[t]]\rightarrow X$.
 Composing $\varphi$ with the morphism
$$\Spec K[[u,v]]\rightarrow \Spec K[[t]]:t\mapsto u+v$$ we get an
element $\psi$ of $X(K[[u,v]])$, \textit{i.e.}  an arc
$\zeta:\Spec K[[u]]\rightarrow \mathcal{L}(X)$. It satisfies
$\zeta(0)=\varphi$, and the point $\zeta(\eta)$ corresponds to an
arc $\psi:\Spec K((u))[[v]]\rightarrow X$ with
$\psi(0)=\psi(\eta)=\varphi(\eta)$. Now the result follows from
the fact that $\zeta(0)$ is contained in the Zariski closure of
$\zeta(\eta)$ in $\mathcal{L}(X/S)$.

Now we prove (2). By Proposition \ref{prelim}(2) it suffices to
deal with the case where $X$ is integral. Let $\xi$ be the generic
point of $X$. Since $k$ has characteristic zero, $\xi$ is formally
smooth over $k$ \cite[19.6.1]{ega4.1}, so $\mathcal{L}(\xi/k)$ is
irreducible by Corollary \ref{forsmooth}. We will show that this
set is dense in $\mathcal{L}(X/k)$.

Let $\psi$ be any point of $\mathcal{L}(X/k)$ and put
$x=\psi(\eta)$. We have to prove that $\psi$ belongs to the
Zariski closure of $\mathcal{L}(\xi/k)$. By (1) the point $\psi$
is a specialization of an element of $\mathcal{L}(x/k)$, so we may
assume that $\psi\in \mathcal{L}(x/k)$.

Denote by $k(X)$ the field of rational functions on $X$. The local
ring $\mathcal{O}_{X,x}$ can be dominated by a valuation $v$ on
$k(X)$. Denote by $V$ the valuation ring of $v$, and by $y$ the
closed point of $\Spec V$. The morphism of schemes $V\rightarrow
X$ induces a morphism of arc schemes $\mathcal{L}(\Spec
V/k)\rightarrow \mathcal{L}(X/k)$. Its image contains both
$\mathcal{L}(\xi/k)$ and $\mathcal{L}(x/k)$, by Theorem
\ref{diff}(4a). Hence, we may assume that $X=\Spec V$. Then
$\mathcal{L}(X/k)$ is integral by Lemma \ref{valuation}. We denote
by $\varphi$ the generic point of $\mathcal{L}(X/k)$. If
$\varphi(0)$ were contained in a strict closed subset $Z$ of $X$,
then we would have $\pi_0^{-1}(Z)=\mathcal{L}(X/k)$. This is
impossible because of the existence of the section $\sigma_{X/k}$.
Hence, $\varphi$ belongs to $\mathcal{L}(\xi/k)$, and this set is
dense in $\mathcal{L}(X/k)$.
\end{proof}

Theorem \ref{Qkol} generalizes \cite[3.3]{NiSe1} to arbitrary
$k$-schemes, avoiding the use of resolution of singularities.

\begin{cor}
For any morphism of schemes $X\rightarrow S$ and any closed
subscheme $Z$ of $X$ with complement $U$, the scheme
$\mathcal{L}(U/S)$ is dense in $\mathcal{L}(X/S)\setminus
\mathcal{L}(Z/S)$.
\end{cor}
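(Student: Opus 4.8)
The plan is to obtain this as a direct consequence of Theorem \ref{Qkol}(1), which holds for an arbitrary morphism $X\to S$, no hypothesis on the base being needed.

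First I would describe the two subspaces involved. Since $\mathcal{L}(\cdot/S)$ preserves closed immersions, $\mathcal{L}(Z/S)$ is closed in $\mathcal{L}(X/S)$, and I claim that, topologically, it is the set of those arcs $\psi:\Spec K[[t]]\to X$ (with $K$ the residue field of the point $\psi$) whose generic point $\psi(\eta)$ lies in $Z$. One inclusion is immediate. For the other, if $\psi(\eta)\in Z$ then, $Z$ being closed and the generic point of $\Spec K[[t]]$ being dense, the whole image of $\psi$ lies in $Z$; hence $\psi$ factors through $Z_{red}$ and thus defines a point of $\mathcal{L}(Z_{red}/S)$, whose image in $\mathcal{L}(X/S)$ is $\psi$ (recall that $\mathcal{L}(Z_{red}/S)\to\mathcal{L}(Z/S)$ is a homeomorphism by Proposition \ref{basic}(5)). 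Consequently $\mathcal{L}(X/S)\setminus\mathcal{L}(Z/S)$ is exactly the set of arcs with $\psi(\eta)\in U$; and $\mathcal{L}(U/S)$ is contained in it, since an arc factoring through the open set $U$ has nonempty image disjoint from $Z$ and so cannot lie in $\mathcal{L}(Z/S)$.

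Next I would run the density argument. Fix $\psi\in\mathcal{L}(X/S)\setminus\mathcal{L}(Z/S)$ and set $x=\psi(\eta)$, a point of $U$. As $\Spec\kappa(x)\to X$ factors through the open immersion $U\hookrightarrow X$ and $\mathcal{L}(\cdot/S)$ respects open immersions, the natural map $\mathcal{L}(x/S)\to\mathcal{L}(X/S)$ factors through $\mathcal{L}(U/S)$, so that $\mathcal{L}(x/S)\subseteq\mathcal{L}(U/S)$ inside $\mathcal{L}(X/S)$. By Theorem \ref{Qkol}(1) there is a point $\psi'\in\mathcal{L}(x/S)\subseteq\mathcal{L}(U/S)$ having $\psi$ in its Zariski closure; hence $\psi$ lies in the closure of $\mathcal{L}(U/S)$. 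Since $\psi$ was arbitrary, this proves the density.

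The only step that requires genuine care — and the place where I expect the argument to have to be spelled out — is the topological identification of $\mathcal{L}(Z/S)$ with the set of arcs whose generic point lands in $Z$; everything else is formal manipulation with the functor $\mathcal{L}(\cdot/S)$ together with an application of Theorem \ref{Qkol}(1).
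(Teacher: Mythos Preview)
Your proof is correct and follows exactly the route the paper intends: the paper's own proof is the single sentence ``This follows immediately from Theorem~\ref{Qkol}(1)'', and your argument is a faithful unpacking of that sentence. Your identification of $\mathcal{L}(Z/S)$ with the arcs whose generic point lands in $Z$ (via reducedness of $K[[t]]$ and Proposition~\ref{basic}(5)) is the only point the paper leaves implicit, and you handle it correctly.
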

\begin{proof}
This follows immediately from Theorem \ref{Qkol}(1).
\end{proof}

\begin{remark}
Even when $X$ is an irreducible complex variety, the jet schemes
$\mathcal{L}_n(X/\C)$ can be reducible. See \cite[0.1]{Mu} for the
relation with the nature of the singularities of $X$.
\end{remark}

\subsection{Wedge schemes}\label{sec-top-wedge}
\begin{definition}\label{wedge}
For any morphism of schemes $X\rightarrow S$ and any integer
$e\geq 0$, the $e$-th wedge scheme $\mathcal{L}^{(e)}(X/S)$ is
defined inductively by $\mathcal{L}^{(0)}(X/S)=X$ and
$$\mathcal{L}^{(e)}(X/S)=\mathcal{L}(\mathcal{L}^{(e-1)}(X/S)/S)$$
for $e>0$.
\end{definition}
By definition, for any local $S$-algebra $A$, there exists a
natural bijection
$$\mathcal{L}^{(e)}(X/S)(A)=X(A[[t_1,\ldots,t_e]])$$
If $A$ is a field, these objects are called $e$-wedges on $X$.
They play an important role in the study of the Nash problem
\cite[5.1]{reguera-curve}.

More generally, for any tuple $i\in (\N\cup\{\infty\})^e$ one can
define a $i$-jet scheme $\mathcal{L}^{i}(X/S)$ in the obvious way;
Definition \ref{wedge} corresponds to the case
$i=(\infty,\ldots,\infty)$. We leave the definition of the $i$-jet
schemes and the various truncation morphisms to the reader.
\begin{prop}
If $k$ is a field of characteristic zero, then for any irreducible
$k$-scheme $X$ and any integer $e>0$, the scheme of $e$-wedges
$\mathcal{L}^{(e)}(X/k)$ is irreducible.
\end{prop}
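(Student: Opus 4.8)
The plan is to induct on $e$, using Theorem \ref{Qkol}(2) as the base case and as the engine of the inductive step. Concretely, suppose $\mathcal{L}^{(e-1)}(X/k)$ is irreducible; I want to conclude that $\mathcal{L}^{(e)}(X/k) = \mathcal{L}(\mathcal{L}^{(e-1)}(X/k)/k)$ is irreducible. The obvious temptation is to apply Theorem \ref{Qkol}(2) directly to the $k$-scheme $Y := \mathcal{L}^{(e-1)}(X/k)$, but there is a subtlety: Theorem \ref{Qkol}(2) is stated for a morphism $X \to S$ with $S = \Spec k$, and $Y$ is a perfectly good $k$-scheme (just not of finite type), so one should check that nothing in the proof of \ref{Qkol}(2) used finite-typeness. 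Inspecting that proof, it reduces to the integral case, invokes formal smoothness of the generic point of an integral $k$-scheme of characteristic zero (\cite[19.6.1]{ega4.1}, which holds for any field extension), Corollary \ref{forsmooth}, Lemma \ref{valuation} (valuation rings containing $k$), and Theorem \ref{Qkol}(1) (stated for arbitrary $X \to S$). None of these require $Y$ to be Noetherian or of finite type, so Theorem \ref{Qkol}(2) applies verbatim to $Y = \mathcal{L}^{(e-1)}(X/k)$.

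First I would record that $\mathcal{L}^{(e-1)}(X/k)$ is irreducible by the induction hypothesis, with the base case $e = 1$ being exactly the irreducibility half of Theorem \ref{Qkol}(2) applied to $X$ (here one uses, via Proposition \ref{prelim}(2), that it suffices to treat $X$ integral; if $X$ is merely irreducible one passes to $X_{red}$ using Proposition \ref{basic}(5), which shows $\mathcal{L}(X/k)$ and $\mathcal{L}(X_{red}/k)$ are homeomorphic). Then, with $Y := \mathcal{L}^{(e-1)}(X/k)$ irreducible, the same argument gives that $\mathcal{L}(Y/k) = \mathcal{L}^{(e)}(X/k)$ is irreducible. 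That closes the induction.

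The one point that deserves a careful sentence is that $\mathcal{L}^{(e-1)}(X/k)$, while irreducible, need not be reduced — but Theorem \ref{Qkol}(2) is a statement about the underlying topological space (it produces a bijection of irreducible components and in particular irreducibility of $\mathcal{L}(Y/k)$ from irreducibility of $Y$), and by Proposition \ref{basic}(5) the natural morphism $\mathcal{L}(Y_{red}/k) \to \mathcal{L}(Y/k)$ is a homeomorphism, so one may freely replace $Y$ by $Y_{red}$ before quoting \ref{Qkol}(2). I would also remark that $\mathcal{L}^{(e-1)}(X/k)$ is automatically a scheme over $k = \Spec k$ via the structural morphisms, so the hypothesis ``$S = \Spec k$'' of \ref{Qkol}(2) is met.

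The main obstacle, then, is not any deep new idea but the bookkeeping check that Theorem \ref{Qkol}(2) — whose statement in the text has $X$ ranging over arbitrary $k$-schemes already, with no finiteness hypothesis — genuinely requires no finiteness or noetherianity of its input, so that it may be fed the non-finite-type scheme $\mathcal{L}^{(e-1)}(X/k)$. Once that is granted the proof is a two-line induction. I would phrase it as: \emph{By Theorem \ref{Qkol}(2), applied with $X$ replaced by the irreducible $k$-scheme $\mathcal{L}^{(e-1)}(X/k)$ (whose formation as a $k$-scheme makes sense even though it is not of finite type, and none of the arguments in the proof of Theorem \ref{Qkol}(2) used finiteness), the arc scheme $\mathcal{L}^{(e)}(X/k) = \mathcal{L}(\mathcal{L}^{(e-1)}(X/k)/k)$ is irreducible; the claim follows by induction on $e$, the case $e = 0$ being trivial and $e = 1$ being Theorem \ref{Qkol}(2) itself.}
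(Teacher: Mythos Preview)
Your proposal is correct and follows exactly the paper's approach: the paper's proof is the single sentence ``This follows from Theorem \ref{Qkol} by induction on $e$.'' Your extra care about whether Theorem \ref{Qkol}(2) applies to non-finite-type inputs is already built into the paper's formulation---Theorem \ref{Qkol} is deliberately stated and proved for arbitrary $k$-schemes (as the paper remarks just after its proof), precisely so that it can be iterated here.
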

\begin{proof}
This follows from Theorem \ref{Qkol} by induction on $e$.
\end{proof}

\subsection{Approximation by curves}\label{sec-curves}
In order to get a better hold on the topology of arc schemes of
varieties over a field of arbitrary characteristic, we show in
this section that arcs can be approximated by algebraic curves.
This result, which has an independent interest, will be applied in
Proposition \ref{conn}. To simplify notation, we will henceforth
write $\mathcal{L}_n(X)$ and $\mathcal{L}(X)$ instead of
$\mathcal{L}_n(X/S)$ and $\mathcal{L}(X/S)$ if the base scheme $S$
is clear from the context.

\begin{lemma}\label{zar-dense}
For any field $k$ and any $k$-scheme of finite type $X$, the set
$\mathcal{L}(X)_{alg}$ is Zariski-dense in $\mathcal{L}(X)$.
\end{lemma}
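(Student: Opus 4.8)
The statement is that the set $\mathcal{L}(X)_{alg}$ of points on the arc scheme with residue field algebraic over $k$ is Zariski-dense. The plan is to reduce to the case where $X$ is affine and irreducible, and then to exhibit, near an arbitrary point of $\mathcal{L}(X)$, an algebraic point contained in every open neighborhood. Since the Zariski topology on $\mathcal{L}(X)=\lim_n \mathcal{L}_n(X)$ is the limit topology (cited from \cite[8.2.9]{ega4.3}), a basis of open sets is given by preimages $\pi_n^{-1}(V)$ for $V$ open in $\mathcal{L}_n(X)$, so it suffices to show: for every $n$ and every nonempty open $V\subset \mathcal{L}_n(X)$ whose preimage meets $\mathcal{L}(X)$, there is a point of $\mathcal{L}(X)_{alg}$ mapping into $V$. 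Equivalently, I want every irreducible component of $\mathcal{L}(X)$ to contain an algebraic point in its generic-point neighborhoods, i.e. algebraic points are dense in each component.

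First I would reduce to $X$ irreducible of finite type over $k$, say $X=\Spec B$ with $B$ a finitely generated $k$-algebra, using Proposition \ref{prelim}(2): the arc scheme is the union of the $\mathcal{L}(X_i)$ over irreducible components $X_i$, so density can be checked componentwise. The key point is then that an $n$-jet $\gamma_n\in \mathcal{L}_n(X)$ with image in a given open $V$ can be lifted to an \emph{arc}, and moreover to an arc defined over an algebraic extension of $k$. For the liftability one is exactly in the situation governed by Greenberg's Approximation Theorem (Theorem \ref{grapp}): working over a suitable henselian discrete valuation ring — the natural candidate is the henselization $R$ of $k[t]_{(t)}$, which is excellent — an element of $X(R/(t^{a\nu}))$ lifts to $X(R)$, hence produces an honest arc. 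The residue field of such an arc can be taken algebraic over $k$ because $R$ has residue field $k$; more carefully, one takes a closed point of the finite-type $k$-scheme $\mathcal{L}_{a\nu}(X)$ lying in the relevant open set (closed points have residue field finite over $k$ by the Notation section, since $\mathcal{L}_{a\nu}(X)$ is of finite type over $k$), and then applies Greenberg to lift it through the tower; the resulting arc is defined over a finite extension of $k$, hence is an algebraic point of $\mathcal{L}(X)$.

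More precisely, the steps I would carry out are: (i) fix a nonempty open subset $W\subset \mathcal{L}(X)$ of the form $\pi_m^{-1}(V)$ with $V\subset \mathcal{L}_m(X)$ open and $W\neq\emptyset$; (ii) choose an integer $a\geq 1$ from Greenberg's theorem applied to $X$ base-changed to $R=\big(k[t]_{(t)}\big)^{h}$; (iii) since $W\neq\emptyset$, there is an arc through $V$, hence the image of $\mathcal{L}_{am}(X)\to \mathcal{L}_m(X)$ meets $V$ in a nonempty open set $V'$ — here one uses that the truncation maps are morphisms of finite-type $k$-schemes and Greenberg forces the image of $X(R/(t^{am}))\to X(R/(t^m))$ to coincide with the image of $X(R)\to X(R/(t^m))$, so $V'$ is nonempty and contained in that image; (iv) pick a closed point $\gamma\in \mathcal{L}_{am}(X)$ mapping into $V'$, with residue field $k'$ finite over $k$; (v) by the Greenberg property, $\gamma\otimes_k k'$, viewed as a $k'$-point of $X(R'/(t^{am}))$ where $R'$ is the corresponding DVR with residue field $k'$ (the henselization of $k'[t]_{(t)}$), lifts to $X(R')$, i.e. to an arc $\Spec k'[[t]]\to X$; (vi) this arc is an element of $\mathcal{L}(X)(k')\subset \mathcal{L}(X)_{alg}$ whose image in $\mathcal{L}_m(X)$ lies in $V$, hence it lies in $W$. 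Since $W$ was an arbitrary nonempty basic open set, $\mathcal{L}(X)_{alg}$ is dense.

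The main obstacle, and the step needing the most care, is step (v): matching up the abstract "points with algebraic residue field" in the scheme-theoretic arc space with the "solutions over a henselian DVR" in Greenberg's formulation, and checking that the henselization of $k'[t]_{(t)}$ is the right excellent henselian DVR with residue field $k'$ and that $X\otimes_k k'$ is of finite type over it. One must also be slightly careful that the completion $k'[[t]]$ receives the arc — but since $X$ is affine this is immediate, and the reduction to the affine case was made at the outset. A secondary subtlety is ensuring that the nonemptiness of $W$ really does produce a $k''[[t]]$-point of $X$ for \emph{some} field $k''$, which one then descends to an algebraic point via the closed-point argument; this is where Proposition \ref{prelim}(2) and the finite-type hypothesis on $X$ are essential, since they guarantee $\mathcal{L}_{am}(X)$ is a finite-type $k$-scheme with plenty of closed points in every nonempty open set.
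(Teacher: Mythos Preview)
Your strategy is essentially the paper's: exploit that the Zariski topology on $\mathcal{L}(X)$ is the limit topology, so it suffices to treat basic opens $\pi_m^{-1}(V)$; pass to a high enough jet level; pick a closed point there (finite residue field, since jet schemes are of finite type over $k$); and invoke Greenberg to lift it to an honest arc. However, step (v) has a genuine gap. In step (ii) you fix the Greenberg constant $a$ for $X$ over $R=(k[t]_{(t)})^h$, but in step (v) you apply Greenberg over $R'=(k'[t]_{(t)})^h$. Theorem~\ref{grapp} furnishes a constant for the pair $(X\times_k R,\,R)$; nothing in its statement says the same $a$ works after replacing $R$ by a finite extension $R'$. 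Since $k'$ only appears \emph{after} you pick the closed point $\gamma$ at level $am$, you cannot choose $a$ for $R'$ in advance; and re-choosing it afterwards forces you to a new jet level $a'm$, where the relevant closed point may have yet another residue field, so the argument does not close as written. (The subtlety you flag in your last paragraph---that $R'$ is excellent henselian---is real but not the crux; the crux is the uniformity of $a$.)

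The paper sidesteps this with a one-line reduction you omit: by Proposition~\ref{basic}(1) one may assume $k$ algebraically closed, since a $\bar{k}$-point of $\mathcal{L}(X\times_k\bar{k})\cong\mathcal{L}(X)\times_k\bar{k}$ maps to a point of $\mathcal{L}(X)$ with residue field contained in $\bar{k}$, hence algebraic over $k$. Over $\bar{k}$ every closed point of $(\pi^m_n)^{-1}(U)$ is $\bar{k}$-rational, so $k'=k$ throughout and a single application of Greenberg over one ring suffices. The paper also takes that ring to be $R=k[[t]]$ itself, which is complete (hence henselian) and excellent; the detour through the henselization of $k[t]_{(t)}$ is unnecessary here. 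Your reductions to $X$ affine and irreducible are harmless but likewise not needed.
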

\begin{proof}
By Proposition \ref{basic}(1) we may assume that $k$ is
algebraically closed. The sets $\pi_n^{-1}(U)$, with $U$ an open
subscheme of $\mathcal{L}_n(X)$, form a basis for the
Zariski-topology on $\mathcal{L}(X)$. Assume that $\pi_n^{-1}(U)$
is non-empty. Since the scheme $(\pi^m_n)^{-1}(U)$ is non-empty
for each $m\geq 0$, it must contain a $k$-rational point because
it is of finite type over $k$. Applying Greenberg's Approximation
Theorem (Theorem \ref{grapp}) to the $k[[t]]$-scheme of finite
type $X\times_k k[[t]]$, we see that $\pi_n^{-1}(U)$ contains a
$k$-rational point.
\end{proof}
\begin{remark}
Any point of $\mathcal{L}(X)_{alg}$ is closed in $\mathcal{L}(X)$.
 If $k$ is uncountable, then $\mathcal{L}(X)_{alg}$ coincides with
the set of closed points on $\mathcal{L}(X)$, but this does not
hold in general \cite[2.9]{Ishii-toric}.
\end{remark}

%\begin{definition}
%The \textrm{$t$-adic topology} on the arc space $\mathcal{L}(X)$ is the
%limit topology w.r.t. the discrete topology on the schemes
%$\mathcal{L}_n(X)$).
%\end{definition}

\begin{definition}
We denote by $Cu(X)$ the set of irreducible curves on $X$, and by
$\mathcal{L}cu(X)$ the subset
$$\tau_{X/k}(X_{alg})\cup \left(\bigcup_{C\in Cu(X)}\mathcal{L}(C)_{alg}\right)$$ of $\mathcal{L}(X)$.
\end{definition}

In other words, a point $x$ of $\mathcal{L}(X)$ belongs to
$\mathcal{L}cu(X)$ iff its residue field $k'$ is algebraic over
$k$ and the corresponding arc $\psi_x:\Spec k'[[t]]\rightarrow X$
maps the generic point of $\Spec k'[[t]]$ to a point of $X$ whose
residue field has transcendence degree at most one over $k$. Note
that
$$\tau_{X/k}(X_{alg})\subset \left(\bigcup_{C\in
Cu(X)}\mathcal{L}(C)_{alg}\right)$$ as soon as $X$ has no
zero-dimensional connected components, since any closed point on a
connected $k$-scheme of finite type $X$ of dimension $\geq 1$ is
contained in a curve on $X$.
\begin{theorem}\label{approx}
Let $k$ be any field, and $X$ a $k$-scheme of finite type. The arc
space $\mathcal{L}(X)$ is densely covered by curves in the
following sense: the set $\mathcal{L}cu(X)$ is $t$-adically dense
in $\mathcal{L}(X)_{alg}$. In particular, $\mathcal{L}cu(X)$ is
Zariski dense in $\mathcal{L}(X)$.
\end{theorem}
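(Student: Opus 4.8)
The plan is to prove the statement directly from Greenberg's Approximation Theorem (Theorem \ref{grapp}), working with the $t$-adic topology. Fix a point $\varphi$ of $\mathcal{L}(X)_{alg}$, with residue field $k'$ algebraic over $k$; it corresponds to an arc $\psi:\Spec k'[[t]]\rightarrow X$. A basic $t$-adic neighbourhood of $\varphi$ is $\pi_n^{-1}(\pi_n(\varphi))$ for some $n\geq 0$, i.e.\ the set of arcs on $X$ whose $n$-jet equals the $n$-jet of $\psi$. We must produce inside this neighbourhood a point of $\mathcal{L}cu(X)$: either a constant arc at a closed point (if $X$ is zero-dimensional near $\psi(0)$) or an arc $\Spec k''[[t]]\rightarrow X$ with $k''/k$ algebraic and whose generic point lands on a point of $X$ of transcendence degree $\leq 1$ over $k$, i.e.\ whose scheme-theoretic image is (contained in) a curve.

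The key step is as follows. Replacing $k$ by $k'$ costs nothing here: by the forgetful morphism (Proposition \ref{basic}(4)) and Corollary \ref{forget}, points of $\mathcal{L}(X/k')_{red}$ and $\mathcal{L}(X/k)_{red}$ agree, and $k'/k$ being algebraic, transcendence degrees over $k$ and over $k'$ coincide; so I may assume $\varphi$ is $k$-rational. Consider the $k[[t]]$-scheme of finite type $\mathcal{X}=X\times_k k[[t]]$. A $k$-rational point $\varphi$ of $\mathcal{L}(X)$ is exactly a $k[[t]]$-point of $\mathcal{X}$, and its $n$-jet is its reduction mod $t^{n+1}$, i.e.\ an element of $\mathcal{X}(k[t]/(t^{n+1}))$. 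Let $a\geq 1$ be the Greenberg constant for $\mathcal{X}$. The reduction of $\varphi$ in $\mathcal{X}(k[t]/(t^{a(n+1)}))$ is a $k[t]/(t^{a(n+1)})$-point of $\mathcal{X}$, hence it factors through some affine scheme of finite type $\Spec R$ with $R$ a $k$-algebra of finite type mapping to $X$; concretely, adjoin to $k$ the finitely many coefficients (up to degree $a(n+1)-1$) appearing in a local expression of $\varphi$, and take the subscheme of $X$ cut out by the relations they satisfy. Since these coefficients lie in $k$, this is harmless; the real point is to cut the data down to a \emph{curve}. Here one chooses, inside the arc $\psi$ itself, a one-parameter family: compose $\psi:\Spec k[[t]]\rightarrow X$ with a generic projection of its (finite-type) scheme-theoretic image to a curve, or — more robustly — mimic the proof of Lemma \ref{zar-dense}: the jet $\pi_n(\varphi)$ together with its lift to level $a(n+1)$ is a finite-type datum, and one finds an irreducible curve $C\subset X$ together with an $a(n+1)$-jet on $C$ reducing to $\pi_n(\varphi)$, simply because the locus in $\mathcal{L}_{a(n+1)}(X)$ of jets factoring through \emph{some} curve through $\psi(0)$ is constructible, non-empty (it contains $\pi_{a(n+1)}(\varphi)$ as $\psi$ itself sweeps out at most a curve's worth of points when $X$ has positive dimension there, and the zero-dimensional case is the constant-arc alternative), and of finite type over $k$, hence has a $k$-rational point. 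Now apply Greenberg to the $k[[t]]$-scheme $C\times_k k[[t]]$ with its own constant $a_C$: a $k[t]/(t^{a_C\cdot m})$-point lifts to a $k[[t]]$-point, i.e.\ to an honest arc $\Spec k[[t]]\rightarrow C\subset X$; choosing $m$ so that $a_C m\geq n+1$ guarantees its $n$-jet equals $\pi_n(\varphi)$. That arc lies in $\mathcal{L}(C)_{alg}\subset \mathcal{L}cu(X)$ and in $\pi_n^{-1}(\pi_n(\varphi))$, establishing $t$-adic density.

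Finally, the "in particular'' clause: the $t$-adic topology on $\mathcal{L}(X)$ is finer than the Zariski topology (the jet schemes carry the discrete topology in one and the Zariski topology in the other), so a $t$-adically dense subset of $\mathcal{L}(X)_{alg}$ is in particular Zariski-dense in $\mathcal{L}(X)_{alg}$, which by Lemma \ref{zar-dense} is Zariski-dense in $\mathcal{L}(X)$; hence $\mathcal{L}cu(X)$ is Zariski-dense in $\mathcal{L}(X)$.

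The main obstacle I expect is the bookkeeping in the middle step: making precise that a finite-type jet $\pi_n(\varphi)$, lifted far enough via Greenberg, genuinely comes from a jet on some \emph{one-dimensional} $C\subset X$ — i.e.\ controlling the transcendence degree of the generic point of the approximating curve while keeping everything of finite type over $k$. The cleanest route is probably to first reduce, via Lemma \ref{zar-dense}'s argument and the remark that closed points on positive-dimensional connected $k$-schemes of finite type lie on curves, to the case where $\varphi$ is already a $k$-rational arc supported (scheme-theoretically) on an irreducible curve, and then the Greenberg lift on that curve is routine; the zero-dimensional components are handled separately by constant arcs at closed points.
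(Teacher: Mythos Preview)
There is a genuine gap, and you have correctly located it yourself: the claim that ``$\psi$ itself sweeps out at most a curve's worth of points'' is simply false. If $X=\A^2_k$ with $k$ of characteristic zero and $\psi(t)=(t,\exp(t)-1)$, then $t$ and $\exp(t)-1$ are algebraically independent over $k$, so the scheme-theoretic image of $\psi$ is all of $\A^2_k$. Thus an arbitrary arc need not factor through any curve, and there is no reason the jet $\pi_{a(n+1)}(\varphi)$ should lie in $\mathcal{L}_{a(n+1)}(C)$ for a fixed curve $C$. Your attempted repair has a circularity problem: for each $N$ one can of course find \emph{some} curve $C_N$ through which the $N$-jet of $\psi$ factors (its scheme-theoretic image is a fat point), but then you need $N\geq a_{C_N}(n+1)-1$ where $a_{C_N}$ is the Greenberg constant of $C_N$, which depends on $C_N$, which depends on $N$. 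Nothing in your outline breaks this loop. Applying Greenberg to $X\times_k k[[t]]$ itself is vacuous: it only tells you that jets lift to arcs on $X$, which you already have.

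The paper's proof sidesteps the difficulty by changing the base ring in Greenberg's theorem. After reducing to $k$ algebraically closed, one applies Theorem~\ref{grapp} not with $R=k[[t]]$ but with $R=\mathcal{O}^h$, the henselization of the local ring $\mathcal{O}=k[t]_{(t)}$; this ring is excellent and henselian, and $\mathcal{O}^h/(t^m)\cong k[t]/(t^m)$. Greenberg then lifts the $n$-jet of $\psi$ to a point $x'\in X(\mathcal{O}^h)$. Now the transcendence-degree bound comes for free: since $\mathcal{O}^h$ is a filtered colimit of local rings $\mathcal{O}'$ essentially \'etale over $\mathcal{O}$ and $X$ is of finite type over $k$, the morphism $x'$ is already defined over some such $\mathcal{O}'$, whose fraction field is finite over $k(t)$ and hence of transcendence degree one. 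The induced arc $\Spec k[[t]]\to\Spec\mathcal{O}'\to X$ therefore lands in a curve and has the prescribed $n$-jet. In short, the curve is produced \emph{by} the approximation, not chosen beforehand. Your reduction to $k$-rational points and your deduction of Zariski density from $t$-adic density via Lemma~\ref{zar-dense} are both fine.
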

\begin{proof}
Let $k^{alg}$ be an algebraic closure of $k$, and put
$X'=X\times_k k^{alg}$. It is clear from the definition that the
natural surjective morphism $\mathcal{L}(X')\rightarrow
\mathcal{L}(X)$ maps $\mathcal{L}cu(X')$ onto $\mathcal{L}cu(X)$.
Hence, we may assume that $k$ is algebraically closed.

 Denote by
$\mathcal{O}$ the local ring of $\A^1_k=\Spec k[t]$ at the origin,
and by $\mathcal{O}^h$ its henselization. This is a henselian
discrete valuation ring, and its completion is isomorphic to
$k[[t]]$. Moreover, $\mathcal{O}$ is excellent since it is
essentially of finite type over a field \cite[7.8.3]{ega4.2}, so
$\mathcal{O}^h$ is excellent by \cite[18.7.6]{ega4.4}. Hence, we
can apply Greenberg's Approximation Theorem (Theorem \ref{grapp})
to the $\mathcal{O}^h$-scheme $X\times_k \mathcal{O}^h$.

It implies that for any point $x$ in $\mathcal{L}(X)(k)=X(k[[t]])$
and any integer $n\geq 0$, the image of $x$ under the natural map
$X(k[[t]])\rightarrow X(k[t]/t^{n+1})$ lifts to a point $x'$ in
$X(\mathcal{O}^h)$. Since $X$ is of finite type over $k$ and
$\mathcal{O}^h$ is a direct limit of essentially \'etale
extensions  of $\mathcal{O}$ \cite[18.6.5]{ega4.4}, the point $x'$
is defined over such an extension $\mathcal{O}'$. The generic
point of $\mathcal{O}'$ is finite over the generic point of
$\mathcal{O}$, so it has transcendence degree one over $k$, and
the scheme-theoretic closure of $x'$ in $X$ has dimension one.
This shows that $\mathcal{L}cu(X)$ is $t$-adically dense in
$\mathcal{L}(X)(k)$. By Lemma \ref{zar-dense}, $\mathcal{L}cu(X)$
is Zariski-dense in $\mathcal{L}(X)$.
\end{proof}

\subsection{Decomposition of arc spaces}\label{sec-decomp}
In this section, we consider the decomposition into irreducible,
resp. connected components, of arc schemes of schemes of finite
type over an arbitrary base field $k$.

\begin{lemma}\label{curve}
If $k$ is any field and $C$ is an irreducible (resp. connected)
$k$-curve, then $\mathcal{L}(C)$ is irreducible (resp. connected).
\end{lemma}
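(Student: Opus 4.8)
The plan is to reduce to the case where $C$ is reduced and irreducible — for connectedness, one observes that $\mathcal{L}(C)$ and $\mathcal{L}(C_{red})$ are homeomorphic by Proposition \ref{basic}(5), so we may replace $C$ by $C_{red}$; a reduced connected $k$-curve is a finite union of irreducible curves glued along points, and the constant-arc section $\tau_{C/k}$ together with Proposition \ref{prelim}(2) lets us propagate connectedness across the gluing once each irreducible piece is handled. So the heart of the matter is: if $C$ is an integral $k$-curve, then $\mathcal{L}(C)$ is irreducible. First I would dispose of the smooth case: if $C$ is smooth over $k$ then $\mathcal{L}(C)$ is integral by Proposition \ref{prelim}(1), hence irreducible, so we may assume $C$ has at least one closed singular point.

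The key step is a normalization argument. Let $\nu:\widetilde{C}\to C$ be the normalization; then $\widetilde{C}$ is a regular integral $k$-curve, and $\nu$ is finite and birational, an isomorphism over a dense open $U\subset C$ whose complement is a finite set $\Sigma$ of closed points. The map $\widetilde{C}$ need not be smooth over $k$ (this is the imperfect-field subtlety), but it is regular, so $\mathcal{L}(\widetilde{C})_{red}$ is still irreducible: indeed $\widetilde{C}$ is smooth over $k$ on a dense open set, and over the finitely many non-smooth closed points $\widetilde{x}$ the residue field is finite over $k$, so by Theorem \ref{diff}(4b) the fiber $\pi_0^{-1}(\widetilde{x})_{red}$ is just the point $\widetilde{x}$ itself, which cannot be a separate irreducible component since it is a specialization (via $\tau_{\widetilde{C}/k}$, or directly by Theorem \ref{Qkol}(1) applied over the prime field — but we are in arbitrary characteristic, so instead use that the smooth locus is dense and $\mathcal{L}$ of the smooth locus is integral and its closure, which is irreducible, must be all of $\mathcal{L}(\widetilde{C})_{red}$ because every arc through $\widetilde{x}$ whose generic point lands in the smooth locus is a specialization of an arc in $\mathcal{L}(\widetilde{C}\setminus\{\widetilde x\})$ — here is where I would lean on Theorem \ref{approx} or a direct deformation-of-arc argument). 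Then I would use that $\mathcal{L}(\nu):\mathcal{L}(\widetilde{C})\to\mathcal{L}(C)$ is surjective on the level of $F$-points for every $k$-field $F$, because $\widetilde{C}(F[[t]])\to C(F[[t]])$ is surjective: an $F[[t]]$-point of $C$ is either the constant arc at a point of $\Sigma$, or it has generic point in $U$, and since $F[[t]]$ is a normal (in fact regular) ring and $\nu$ is finite birational, the point lifts to $\widetilde{C}$ by the valuative/integral-closure lifting property. Surjectivity of $\mathcal{L}(\nu)$ on points then shows $\mathcal{L}(C)$ is the image of the irreducible space $\mathcal{L}(\widetilde{C})_{red}$, plus possibly the finitely many constant arcs $\tau_{C/k}(\Sigma)$ — but those are specializations, so $\mathcal{L}(C)$ is irreducible.

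I expect the main obstacle to be the surjectivity of $\mathcal{L}(\widetilde C)\to\mathcal{L}(C)$ on $F[[t]]$-points, or more precisely the irreducibility of $\mathcal{L}(\widetilde C)_{red}$ for a \emph{regular but possibly non-smooth} curve over an imperfect field — the naive argument "smooth locus is dense, so its arc space is dense" requires knowing that an arc passing through a non-smooth closed point $\widetilde x$ with generic point in the smooth locus lies in the closure of arcs avoiding $\widetilde x$, and the cleanest route to that is exactly Theorem \ref{approx} together with Greenberg approximation, reducing to approximating such an arc by a genuine algebraic curve (namely $\widetilde C$ itself) and then deforming. A secondary subtlety is bookkeeping the reduced structures: one must be careful that $\mathcal{L}(\nu)$ need not be a closed immersion and that the various schemes involved are non-reduced, so I would phrase the whole argument on underlying topological spaces, invoking Proposition \ref{basic}(5) at the start to pass to $C_{red}$ and working with $\mathcal{L}(-)_{red}$ throughout.
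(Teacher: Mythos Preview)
Your outline follows the paper's strategy closely (reduce to $C$ reduced, normalize, lift arcs via the valuative criterion, handle constant arcs at the non-normal locus, then glue for connectedness), but there is one genuine gap and one unnecessary detour.

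\textbf{The gap.} You assume implicitly that the normalized curve $\widetilde{C}$ has a \emph{dense} smooth locus, and then argue that the finitely many non-smooth closed points contribute only constant arcs. Over an imperfect field this can fail: an integral $k$-curve need not be geometrically reduced, and then $Sm(\widetilde{C})$ may be empty. For instance, take $k=\mathbb{F}_p(u)$ and $C=\Spec k[x,y]/(y^p-u)$; this is already regular (so $\widetilde{C}=C$), but nowhere smooth over $k$, so your argument never gets off the ground. The paper avoids this by invoking Proposition~\ref{prelim}(3) at the very start: passing to the perfect closure $k'$ of $k$ gives a homeomorphism $\mathcal{L}(C\times_k k'/k')\to\mathcal{L}(C/k)$, and over a perfect field every reduced curve is geometrically reduced, so the normalization really is smooth and Proposition~\ref{prelim}(1) applies directly. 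With that single extra reduction your plan becomes correct and essentially identical to the paper's proof.

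\textbf{The detour.} You hesitate to use Theorem~\ref{Qkol}(1) in positive characteristic and reach instead for Theorem~\ref{approx}. But part~(1) of Theorem~\ref{Qkol} is stated and proved for an arbitrary morphism of schemes $X\to S$, with no characteristic hypothesis; only part~(2) needs characteristic zero. So if you wanted to run the argument without base-changing to a perfect field (restricting then to geometrically reduced $C$), Theorem~\ref{Qkol}(1) would already give you that any arc with generic point in the smooth locus lies in the closure of $\mathcal{L}(Sm(\widetilde{C}))$, and Theorem~\ref{approx} is not needed.
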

\begin{proof}
By Propositions \ref{basic}(5) and \ref{prelim}(3) we may assume
that $C$ is reduced and that $k$ is perfect. We denote by
$\widetilde{C}\rightarrow C$ the normalization map.

First, assume that $C$ is irreducible.  Denote by
$nN(C)=\{x_1,\ldots,x_n\}$ the finite set of points where $C$ is
not normal.
 The natural morphism $h:\mathcal{L}(\widetilde{C})\rightarrow
\mathcal{L}(C)$ is surjective: if $x$ is contained in
$\mathcal{L}(C)\setminus \mathcal{L}(nN(C))$ then the
corresponding arc lifts (uniquely) to $\widetilde{C}$ by the
valuative criterion for properness \cite[7.3.8]{ega2}. If $x$ is
contained in $\mathcal{L}(x_i/k)$ then $x$ is the constant arc
$\tau_{X/k}(x_i)$ at $x_i$ by Corollary \ref{cor-ft}, so $x$ lifts
to the constant arc at any point of the inverse image of $x_i$ in
$\widetilde{C}$.

Since $k$ is perfect and $C$ irreducible, the normalization
$\widetilde{C}$ is smooth and connected, so
$\mathcal{L}(\widetilde{C})$ is irreducible by Proposition
\ref{prelim}(1) and the same holds for $\mathcal{L}(C)$ by
surjectivity of $h$.

Now assume that $C$ is connected, and denote by
$C_1,\ldots,C_\ell$ its irreducible components. By the first part
of the proof, $\mathcal{L}(C_i)$ is irreducible, and hence
connected, for each $i$. By Proposition \ref{prelim}(2),
$\mathcal{L}(C)=\cup_{i}\mathcal{L}(C_i)$. But if $C_i\cap C_j\neq
\emptyset$, then $\mathcal{L}(C_i)\cap \mathcal{L}(C_j)\neq
\emptyset$ (consider the constant arc at any point of $C_i\cap
C_j$). Since $C$ is connected, this implies that $\mathcal{L}(C)$
is connected as well.
\end{proof}

\begin{corollary}
Let $k$ be any field and let $C$ a $k$-curve. The map $$Z\mapsto
\mathcal{L}(Z/k)_{red}$$ defines a bijection between the set of
connected (resp. irreducible) components of $C$ and the set of
connected (resp. irreducible) components of $\mathcal{L}(C)$ (all
taken with their reduced induced structure).
\end{corollary}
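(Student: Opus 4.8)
The plan is to deduce this corollary from Lemma~\ref{curve} together with the general topological machinery already established in Proposition~\ref{prelim}(2) and Proposition~\ref{basic}(5). Write $C=\coprod_j C^{(j)}$ for the decomposition into connected components (these are open and closed in $C$, hence themselves $k$-curves), and within each connected component list the irreducible components. First I would treat the irreducible case: if $Z$ is an irreducible component of $C$, taken with its reduced induced structure, then $Z$ is an irreducible $k$-curve, so $\mathcal{L}(Z/k)$ is irreducible by Lemma~\ref{curve}, and hence $\mathcal{L}(Z/k)_{red}$ is an irreducible closed subset of $\mathcal{L}(C/k)$. By Proposition~\ref{prelim}(2) the sets $\mathcal{L}(Z/k)$ cover $\mathcal{L}(C/k)$ as $Z$ ranges over $\mathcal{I}(C)$, each one is a union of irreducible components of $\mathcal{L}(C/k)$, and for distinct $Z,Z'$ neither is contained in the other; since each $\mathcal{L}(Z/k)$ is itself irreducible, this forces each $\mathcal{L}(Z/k)_{red}$ to be a single irreducible component of $\mathcal{L}(C/k)$ and the assignment $Z\mapsto \mathcal{L}(Z/k)_{red}$ to be a bijection onto $\mathcal{I}(\mathcal{L}(C/k))$. (One should note that by Proposition~\ref{basic}(5) we may replace $C$ by $C_{red}$ at the outset without changing either the underlying topological space of $\mathcal{L}(C/k)$ or the combinatorics of components, so working with reduced induced structures throughout is harmless.)

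For the connected case the argument is parallel but uses the connectedness half of Lemma~\ref{curve}. If $Z$ is a connected component of $C$, it is a connected $k$-curve, so $\mathcal{L}(Z/k)$ is connected by Lemma~\ref{curve}; it is also open and closed in $\mathcal{L}(C/k)$ because the functor $\mathcal{L}(\cdot/k)$ respects the open-closed immersion $Z\hookrightarrow C$ (equivalently, $\mathcal{L}(C/k)=\coprod_j\mathcal{L}(Z^{(j)}/k)$ as schemes, since $\mathcal{L}(\cdot/k)$ commutes with the finite disjoint union $C=\coprod_j Z^{(j)}$, which follows from Proposition~\ref{basic}(2) or directly from the functor-of-points description). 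A connected scheme that is a disjoint union of open-and-closed connected pieces has exactly those pieces as its connected components, so $Z\mapsto\mathcal{L}(Z/k)_{red}$ is the desired bijection between $\mathcal{C}(C)$ and $\mathcal{C}(\mathcal{L}(C/k))$.

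I do not anticipate a serious obstacle here: the corollary is essentially a bookkeeping consequence of Lemma~\ref{curve} and the component-tracking statements of Proposition~\ref{prelim}(2). The one point that deserves a word of care is the passage between ``$\mathcal{L}(Z/k)$ is a union of irreducible components of $\mathcal{L}(C/k)$'' and ``$\mathcal{L}(Z/k)_{red}$ is a single irreducible component'': this is exactly where irreducibility of $\mathcal{L}(Z/k)$ (from Lemma~\ref{curve}), rather than mere closedness, is used, and combined with the non-containment clause of Proposition~\ref{prelim}(2) it pins down the bijection. Everything else — reducedness reductions, disjoint-union compatibility, and the open-and-closed nature of the pieces in the connected case — is routine.
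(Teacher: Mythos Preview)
Your proposal is correct and is exactly the expansion of the paper's one-line proof, which just invokes Lemma~\ref{curve} and Proposition~\ref{prelim}(2). One small slip: Proposition~\ref{basic}(2) is about fibered products, not disjoint unions, so it does not justify $\mathcal{L}(\coprod_j Z^{(j)})\cong\coprod_j\mathcal{L}(Z^{(j)})$; but this is harmless, since your primary justification (that $\mathcal{L}(\cdot/k)$ preserves open and closed immersions) already suffices, and in fact the connected case can be handled symmetrically to the irreducible case directly from the connected-component clause of Proposition~\ref{prelim}(2) without ever mentioning disjoint unions.
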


\begin{proof}
This is a direct consequence of Lemma \ref{curve} and Proposition
\ref{prelim}(2).
\end{proof}

%Theorem \ref{grapp} and Lemma \ref{curve} also yield an
%alternative proof of Theorem \ref{Qkol}(1) in the finite type
%case.
%\begin{corollary}\label{dense}
%Let $k$ be any field, and $X$ a $k$-scheme of finite type. If $Z$
%is any closed subscheme of $X$, with complement $U$, then
% $\mathcal{L}(U)$
%is Zariski-dense in $\mathcal{L}(X)\setminus \mathcal{L}(Z)$.
%\end{corollary}
%
%\begin{proof}
%We may assume that $X$ has no connected components of dimension
%zero. By Theorem \ref{approx} it suffices to show that the Zariski
%closure of $\mathcal{L}(U)$ in $\mathcal{L}(X)$ contains
%$\mathcal{L}cu(X)\setminus \mathcal{L}(Z)$. By definition, for
%any point $x$ of $\mathcal{L}cu(X)$ there exists an
%irreducible curve $C$ on $X$ such that $\mathcal{L}(C)$ contains
%$x$. If $x$ belongs to $\mathcal{L}cu(X)\setminus
%\mathcal{L}(Z)$ then this curve $C$ intersects $U$ and
%$\mathcal{L}(U)\cap \mathcal{L}(C)$ is non-empty. Since
%$\mathcal{L}(C)$ is irreducible by Lemma \ref{curve}, this implies
%that $x$ belongs to the Zariski closure of $\mathcal{L}(U)$ in
%$\mathcal{L}(X)$.
%\end{proof}

\begin{theorem}
\label{gen case} Let $k$ be any field and $X$ a $k$-scheme of
finite type.
\begin{enumerate}%\renewcommand{\theenumi}{(\arabic{enumi})}
\item If $X$ is irreducible, then $\mathcal{L}(X)\setminus
  \mathcal{L}(nSm(X))$ is irreducible.
\item The map $$Z\mapsto \mathcal{L}(Z/k)_{red}$$ defines a
bijection between the set of irreducible components of $X$ which
are geometrically reduced, and the set of irreducible components
of $\mathcal{L}(X)\setminus \mathcal{L}(nSm(X))$.
\end{enumerate}
\end{theorem}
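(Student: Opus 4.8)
The plan is to reduce Theorem~\ref{gen case} to the curve case (Lemma~\ref{curve}) using the approximation result (Theorem~\ref{approx}), combined with Proposition~\ref{prelim}(2) to handle the decomposition into components. First I would treat part~(1). Assume $X$ is irreducible. By Proposition~\ref{basic}(5) we may replace $X$ by $X_{red}$, which changes neither $\mathcal{L}(X)$ (up to homeomorphism) nor $nSm(X)$; so assume $X$ is a variety. Write $U=X\setminus nSm(X)=Sm(X)$, an open subscheme of $X$; note $U$ is nonempty precisely when $X$ is geometrically reduced, and in part~(1) without that hypothesis $Sm(X)$ may be empty, in which case $\mathcal{L}(X)\setminus\mathcal{L}(nSm(X))=\emptyset$ is vacuously irreducible. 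So assume $U\neq\emptyset$. The key point is that $\mathcal{L}(U)$ is open and dense in $\mathcal{L}(X)\setminus\mathcal{L}(nSm(X))$: density follows from the Corollary to Theorem~\ref{Qkol} applied with $Z=nSm(X)$ (note that result is stated for arbitrary morphisms of schemes, hence valid in any characteristic). It therefore suffices to show $\mathcal{L}(U)$ is irreducible. Now $U$ is a smooth $k$-variety, but it need not be integral a priori --- however $X$ irreducible forces $U$ irreducible, and $U$ reduced plus irreducible gives $U$ integral. Then $\mathcal{L}(U)$ is integral by Proposition~\ref{prelim}(1), in particular irreducible, which proves~(1).

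For part~(2), let $X_1,\dots,X_r$ be the irreducible components of $X$. By Proposition~\ref{prelim}(2), $\mathcal{L}(X)=\bigcup_i\mathcal{L}(X_i)$ and $nSm(X)\supseteq X_i\cap X_j$ for $i\neq j$, as well as $nSm(X)\cap X_i\supseteq nSm(X_i)$ together with the locus where $X_i$ meets another component. The plan is to show that $\mathcal{L}(X)\setminus\mathcal{L}(nSm(X))$ is the disjoint union, over those $i$ with $X_i$ geometrically reduced, of the sets $\mathcal{L}(X_i)\setminus\mathcal{L}(nSm(X))$, each of which has $\mathcal{L}((X_i)_{red})_{red}=\mathcal{L}(X_i/k)_{red}$ as its closure and is irreducible by part~(1). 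The crucial observation is that a point $\varphi$ of $\mathcal{L}(X)$ lying outside $\mathcal{L}(nSm(X))$ has $\varphi(0)=\pi_0(\varphi)\notin nSm(X)$, hence $\varphi(0)$ lies in $Sm(X)$, so in particular it lies on a unique component $X_i$ and is a smooth point of $X$; since $\Spec(\text{residue field})[[t]]\to X$ lands in the component through its closed point, $\varphi\in\mathcal{L}(X_i)$ and not in any $\mathcal{L}(X_j)$ for $j\neq i$ (using $\mathcal{L}(X_i)\cap\mathcal{L}(X_j)\subseteq\mathcal{L}(X_i\cap X_j)\subseteq\mathcal{L}(nSm(X))$). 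Moreover if $X_i$ is not geometrically reduced then $Sm(X_i)=\emptyset$, and since $Sm(X)\cap X_i\subseteq Sm(X_i)$ we get $Sm(X)\cap X_i=\emptyset$, so no such $\varphi$ has $\varphi(0)\in X_i$; hence only the geometrically reduced components contribute. This gives the disjoint decomposition, and since each piece is open (being $\mathcal{L}(X_i)\setminus\mathcal{L}(nSm(X))$ with $\mathcal{L}(X_i)$ locally closed) and irreducible with the stated closure, the map $Z\mapsto\mathcal{L}(Z/k)_{red}$ is the claimed bijection.

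I expect the main obstacle to be verifying carefully that $Sm(X)\cap X_i\subseteq Sm(X_i)$ and, conversely, that $Sm(X_i)$ minus the intersection with other components equals $Sm(X)\cap X_i$ --- i.e.\ that smoothness of $X$ at a point $x\in X_i$ is detected on the component, and that $Sm(X)$ avoids all pairwise intersections. The first inclusion is clear since near a point of $X_i$ not on any other component $X$ and $X_i$ agree as schemes (here one uses that $X$ is reduced, so $X_i$ is a connected component of an open dense subscheme). The subtlety is entirely at points lying on several components: such points are automatically in $nSm(X)$ because $X$ fails to be equidimensional/smooth there --- a smooth scheme over a field is a disjoint union of integral smooth schemes, so its points lie on a single irreducible component. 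This reduces everything to the clean statement that $Sm(X)$ is the smooth-over-$k$ locus, which by definition is open and consists precisely of points where $X$ is, locally, a smooth integral $k$-variety. Once that is pinned down, the rest is the formal bookkeeping sketched above, invoking Proposition~\ref{prelim}(1),(2), Proposition~\ref{basic}(5), and the density corollary to Theorem~\ref{Qkol}; Lemma~\ref{curve} and Theorem~\ref{approx} are then only needed if one prefers the alternative route through curves, which would replace the appeal to Proposition~\ref{prelim}(1) by: $\mathcal{L}cu(U)$ is dense in $\mathcal{L}(U)$, each $\mathcal{L}(C)$ for $C$ a curve in $U$ is irreducible, all these pass through the generic point's arc locus, hence $\mathcal{L}(U)$ is irreducible.
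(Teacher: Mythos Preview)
Your overall architecture for both parts matches the paper's: for (1) you show $\mathcal{L}(Sm(X))$ is irreducible via Proposition~\ref{prelim}(1) and then use the density statement coming from Theorem~\ref{Qkol}(1) (the paper invokes \ref{Qkol}(1) directly rather than its corollary, but that is the same thing). For (2) you decompose along the $\mathcal{L}(X_i)$ and sort out which pieces survive, just as the paper does.

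There is, however, a genuine error in your ``crucial observation''. You assert that $\varphi\notin\mathcal{L}(nSm(X))$ implies $\varphi(0)=\pi_0(\varphi)\notin nSm(X)$. This is false: $\mathcal{L}(nSm(X))$ consists of arcs lying \emph{entirely} in $nSm(X)$, i.e.\ those with $\varphi(\eta)\in nSm(X)$; it is strictly smaller than $\pi_0^{-1}(nSm(X))$. An arc on a singular variety can have its origin at a singular point while its generic point is smooth, and such an arc lies in $\mathcal{L}(X)\setminus\mathcal{L}(nSm(X))$ but has $\varphi(0)\in nSm(X)$. So the sentence ``hence $\varphi(0)$ lies in $Sm(X)$, so in particular it lies on a unique component'' does not follow, and the rest of your disjointness argument, as written, rests on this.

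The fix is easy and is implicit in what you wrote elsewhere: membership of $\varphi$ in some $\mathcal{L}(X_i)$ comes from Proposition~\ref{prelim}(2), and uniqueness follows because $\mathcal{L}(X_i)\cap\mathcal{L}(X_j)=\mathcal{L}(X_i\cap X_j)\subset\mathcal{L}(nSm(X))$ (using that $X_i\cap X_j\subset nSm(X)$, which you correctly justify). The paper avoids the issue altogether by not arguing disjointness at all: it simply observes that each $\mathcal{L}(X_i)\setminus\mathcal{L}(nSm(X))$ is closed in $\mathcal{L}(X)\setminus\mathcal{L}(nSm(X))$, irreducible (as an open subset of $\mathcal{L}(X_i)\setminus\mathcal{L}(nSm(X_i))$, irreducible by part~(1)), and that no nonempty one is contained in another (via the section $\tau_{X/k}$). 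That already pins down the irreducible components. Your reduction to $X$ reduced in part~(1) is harmless but unnecessary, since $Sm(X)$ is automatically reduced. The references to Lemma~\ref{curve} and Theorem~\ref{approx} are not needed here; the paper uses them only later, for connectedness.
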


\begin{proof}
(1) It follows from Proposition \ref{prelim}(1) that
$\mathcal{L}(Sm(X))$ is irreducible, so the result follows from
Theorem \ref{Qkol}(1).

(2) Let $\{X_i\}_{i\in I}$ be the set of irreducible components of
$X$. Then $$\mathcal{L}(X)\setminus \mathcal{L}(nSm(X))=\cup_{i\in
I} \left(\mathcal{L}(X_i)\setminus \mathcal{L}(nSm(X))\right)$$ by
Proposition \ref{prelim}(2). Moreover, $\mathcal{L}(X_i)\setminus
\mathcal{L}(nSm(X))$ is open in $\mathcal{L}(X_i)\setminus
\mathcal{L}(nSm(X_i))$, and hence, by (1), an irreducible closed
subset of $\mathcal{L}(X)\setminus \mathcal{L}(nSm(X))$ for each
$i\in I$. Finally, for $i\neq j$ in $I$,
$$\mathcal{L}(X_i)\setminus \mathcal{L}(nSm(X))\subset
\mathcal{L}(X_j)\setminus \mathcal{L}(nSm(X))$$ implies that
$X_i\setminus nSm(X)\subset X_j$ by the existence of the section
$\tau_{X/k}$. This happens iff $X_i\subset nSm(X)$, \textit{i.e.}
iff $X$ is not geometrically reduced.
\end{proof}

\begin{cor}
\label{finite} If $k$ is any field and $X$ is a $k$-scheme of
finite type, then $\mathcal{L}(X)$ has finitely many irreducible
components.
\end{cor}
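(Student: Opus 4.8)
The plan is to deduce Corollary~\ref{finite} from Theorem~\ref{gen case}(2) together with the characteristic-zero case (Theorem~\ref{Qkol}(2)) and the curve case (Lemma~\ref{curve}), reducing everything to a counting of irreducible components. The point of Theorem~\ref{gen case}(2) is that the irreducible components of $\mathcal{L}(X)$ lying outside $\mathcal{L}(nSm(X))$ are in bijection with a subset of the (finitely many) irreducible components of $X$; so it remains only to bound the irreducible components of $\mathcal{L}(X)$ that are \emph{contained in} $\mathcal{L}(nSm(X))$. Writing $Z=nSm(X)$ with its reduced induced structure, this is a closed $k$-subscheme of finite type of strictly smaller dimension than $X$ (strictly smaller on each irreducible component of $X$, and empty if $X$ is smooth), so a dimension induction is available.

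First I would set up the induction on $d=\dim X$. If $d\leq 0$ then $X$ is a finite disjoint union of spectra of Artinian $k$-algebras; by Corollary~\ref{cor-ft} (or directly) $\mathcal{L}(X)_{red}\to X_{red}$ is an isomorphism, so $\mathcal{L}(X)$ has finitely many irreducible components. For the inductive step, $Z=nSm(X)$ satisfies $\dim Z<\dim X$, so by induction $\mathcal{L}(Z)$ has finitely many irreducible components, hence so does the closed subset $\mathcal{L}(nSm(X))$ of $\mathcal{L}(X)$. Any irreducible component of $\mathcal{L}(X)$ is either contained in $\mathcal{L}(nSm(X))$ --- in which case it is one of these finitely many --- or it is not, in which case its intersection with $\mathcal{L}(X)\setminus\mathcal{L}(nSm(X))$ is a nonempty open dense subset and hence it is the closure of an irreducible component of $\mathcal{L}(X)\setminus\mathcal{L}(nSm(X))$. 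By Theorem~\ref{gen case}(2) the latter are in bijection with a subset of the irreducible components of $X$, of which there are finitely many since $X$ is Noetherian. Adding the two finite bounds gives the result.

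The one point that needs a little care is that a closed subset of $\mathcal{L}(X)$, such as $\mathcal{L}(nSm(X))$, may a priori have infinitely many irreducible components even if it "is" an arc scheme of a finite-type $k$-scheme --- but that is exactly what the induction hypothesis rules out, applied to the finite-type $k$-scheme $nSm(X)_{red}$ of smaller dimension, using Proposition~\ref{basic}(5) to replace $nSm(X)$ by its reduction without changing the underlying topological space. I expect the main (minor) obstacle to be bookkeeping: making sure the dimension drops strictly on \emph{every} component so the induction is well-founded (this uses that $Sm(X)$ is dense in each geometrically reduced component and that non-geometrically-reduced components are entirely contained in $nSm(X)$ but still have strictly smaller-dimensional image contributions --- more simply, one may induct on $\dim X$ directly since $nSm(X)\subsetneq X$ forces $\dim nSm(X)<\dim X$ when $X$ is reduced, and by Proposition~\ref{basic}(5,6) one reduces to $X$ reduced at the outset). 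No deep input beyond the already-proved Theorem~\ref{gen case} and Noetherianity of $X$ is required.
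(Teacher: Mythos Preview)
Your overall strategy---dimension induction, with Theorem~\ref{gen case}(2) controlling the components of $\mathcal{L}(X)$ not contained in $\mathcal{L}(nSm(X))$ and the inductive hypothesis controlling those that are---is exactly the paper's approach. The gap is in the inductive step: your claim that ``$nSm(X)\subsetneq X$ forces $\dim nSm(X)<\dim X$ when $X$ is reduced'' is false over imperfect fields, and in fact $nSm(X)=X$ can occur even for $X$ integral. For instance, if $k$ has characteristic $p>0$ and $a\in k\setminus k^p$, then $X=\Spec k[x,y,z]/(x^p-a)$ is a two-dimensional integral $k$-variety with $nSm(X)=X$, since all partials of $x^p-a$ vanish identically. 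In that situation your induction applies the hypothesis to $nSm(X)_{red}=X$ itself and never terminates. Your parenthetical remark about non-geometrically-reduced components having ``strictly smaller-dimensional image contributions'' is not correct: such a component sits entirely inside $nSm(X)$ with its full dimension.

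The paper closes this gap by first passing to the perfect closure of $k$: Proposition~\ref{prelim}(3) shows that base change along a purely inseparable algebraic extension $k'/k$ induces a homeomorphism $\mathcal{L}(X\times_k k'/k')\to\mathcal{L}(X/k)$, so one may assume $k$ perfect (and, via Proposition~\ref{prelim}(2), that $X$ is integral). Over a perfect field one has $nSm(X)=Sing(X)$, and for $X$ integral the singular locus is a proper closed subset, so $\dim Sing(X)<\dim X$ and the induction goes through. Once you insert this reduction at the start, your argument is correct and coincides with the paper's.
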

\begin{proof}
By Proposition \ref{prelim}(2,3), we may assume that $X$ is
integral and that $k$ is perfect. By induction on the dimension of
$X$, it suffices to show that
$\mathcal{L}(X)\setminus\mathcal{L}(Sing(X))$ has finitely many
irreducible components. But since $k$ is perfect, $Sing(X)=nSm(X)$
so $\mathcal{L}(X)\setminus\mathcal{L}(Sing(X))$ is irreducible by
Theorem \ref{gen case}.
\end{proof}

\begin{lemma}
\label{concurve} If $k$ is any field and $X$ is a connected
$k$-scheme of finite type of dimension $\geq 1$, then for any pair
of closed points $\{x,y\}$ on $X$, there exists a connected curve
$C$ on $X$ containing $x$ and $y$.
\end{lemma}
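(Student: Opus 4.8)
The plan is to reduce, through a sequence of standard moves, to the case of an irreducible projective variety over an algebraically closed field, and then to thread a curve through the two points by successive hypersurface sections. First I would dispose of the ground field.

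\emph{Reduction to $k=k^{alg}$.} Let $p\colon X'=X\times_k k^{alg}\to X$ be the projection; it is surjective and closed, being integral. Since $X$ is connected, the group of $k$-automorphisms of $k^{alg}$ permutes the finitely many connected components of $X'$ transitively, and it does so over $X$; hence all these components have the same image under $p$, which must therefore be all of $X$. I would then fix one component $Z$: it is a connected $k^{alg}$-scheme of finite type with $\dim Z\geq\dim X\geq 1$, and $x,y$ lift to closed points $x',y'$ of $Z$. Granting the statement over $k^{alg}$, a connected curve $C'\subseteq Z$ through $x'$ and $y'$ maps under $p$ to a closed connected subset of $X$ containing $x$ and $y$, of pure dimension one (a zero-dimensional component of the image would force a component of $C'$ into a finite fibre of $p$ over a closed point of $X$); its reduced structure gives the desired curve on $X$. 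So I may assume $k=k^{alg}$.

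\emph{Reduction to an irreducible affine piece, and compactification.} Since $X$ is connected of dimension $\geq 1$, every irreducible component has dimension $\geq 1$ (a zero-dimensional component would be an isolated point of $X$, hence a connected component). Writing $X=\bigcup_i X_i$ for the reduced irreducible components, connectedness lets me choose a chain $X_{i_0},\dots,X_{i_r}$ with $x\in X_{i_0}$, $y\in X_{i_r}$ and $X_{i_s}\cap X_{i_{s+1}}\neq\emptyset$; picking closed points $w_s\in X_{i_s}\cap X_{i_{s+1}}$ and setting $w_{-1}=x$, $w_r=y$, it suffices to find a connected curve $C_s\subseteq X_{i_s}$ through $w_{s-1},w_s$ for each $s$, since $\bigcup_s C_s$ is then a connected curve on $X$ through $x$ and $y$. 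This reduces me to $X$ integral of dimension $d\geq 1$. If $d=1$, I take $C=X$. If $d\geq 2$, I pick affine opens $U\ni x$, $V\ni y$ and a closed point $w\in U\cap V$ (nonempty as $X$ is irreducible); producing curves through $x,w$ inside $U$ and through $w,y$ inside $V$ and gluing at $w$, I may assume $x,y$ lie in a common affine open $U=\Spec A$, with $A$ a domain of dimension $d$ over $k$. I then fix a closed immersion $U\hookrightarrow\mathbb{A}^M_k\subseteq\mathbb{P}^M_k$ and let $W$ be the closure of $U$ in $\mathbb{P}^M_k$, an integral projective variety of dimension $d\geq 2$ containing $x$ and $y$. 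It now suffices to find an \emph{irreducible} curve $\bar C\subseteq W$ through $x$ and $y$: then $\bar C\cap U$ is a nonempty open subset of the irreducible $\bar C$, hence an irreducible curve, closed in $U$, and its closure in $X$ is the curve I want.

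\emph{Finding $\bar C$ in $W$.} I would do this by induction on $\dim W$, the case $\dim W=1$ being $\bar C=W$. For $\dim W\geq 2$, consider the linear system $\mathfrak{d}$ on $W$ cut out by the degree-$m$ hypersurfaces of $\mathbb{P}^M_k$ through $x$ and $y$; a dimension count shows that for $m\gg 0$ this system is positive-dimensional, has no fixed component, and the image of $W$ under the associated rational map has dimension $\dim W\geq 2$, so Bertini's irreducibility theorem — applicable because the ground field is now infinite — yields a member $D\in\mathfrak{d}$ with $W\cap D$ irreducible; since $W\not\subseteq D$, this $W\cap D$ is an irreducible projective variety of dimension $\dim W-1\geq 1$ still containing $x$ and $y$, and the induction hypothesis concludes. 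I expect this Bertini step to be the only real obstacle: one must keep the hypersurface section through the two prescribed base points $x,y$ irreducible, which is precisely why $m$ has to be taken large and why the preliminary passage to an infinite (here algebraically closed) ground field is needed; everything else is routine bookkeeping with irreducible components, affine charts, closures and gluing.
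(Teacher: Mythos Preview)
The paper does not actually prove this lemma: its entire ``proof'' reads ``The statement is classical and left to the reader. See, for example, [Liu], Exercise 8.1.5.'' So there is nothing substantive to compare your argument against.

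Your argument is correct and is essentially the classical one that the paper is gesturing at: reduce to an irreducible projective variety over an algebraically closed field, then slice by Bertini. The reductions (base change to $k^{alg}$ using transitivity of the Galois action on geometric connected components, chaining through irreducible components, passing to an affine chart and compactifying) are all sound. Two small points worth making explicit in the Bertini step: first, to apply Bertini's irreducibility theorem in the form of Jouanolou you should restrict to the morphism $W\setminus\{x,y\}\to\mathbb{P}^N$ where the system is base-point-free, and then observe that its image has dimension $\dim W\ge 2$ because for $m\ge 2$ the system already separates points of $W\setminus\{x,y\}$ (unions of hyperplanes suffice); second, once a general $D$ gives $(D\cap W)\setminus\{x,y\}$ irreducible, you need that $x$ and $y$ are not isolated in $D\cap W$, which follows since $D\cap W$ has pure dimension $\dim W-1\ge 1$ by the projective dimension theorem. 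With those two remarks the induction goes through cleanly.
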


\begin{proof}
The statement is classical and left to the reader. See, for
example, \cite{Liu}, Exercise 8.1.5.
\end{proof}

\begin{prop}
\label{conn} Let $k$ be any field, and $X$ a $k$-scheme of finite
type.
\begin{enumerate}%\renewcommand{\theenumi}{(\arabic{enumi})}
\item If $X$ is connected, then $\mathcal{L}(X)$ is connected.
\item If $X$ is irreducible and $nSm(X)$ is a finite set of
points, then $\mathcal{L}(X)$ is irreducible. In particular, if
$k$ is perfect and $X$ is irreducible with only isolated
singularities, then $\mathcal{L}(X)$ is irreducible.
\end{enumerate}
\end{prop}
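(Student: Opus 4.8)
The plan is to prove the two assertions separately, disposing of the case $\dim X=0$ at the outset. If $\dim X=0$, then by Corollary \ref{cor-ft} the morphism $(\pi_0)_{red}\colon\mathcal{L}(X)_{red}\to X_{red}$ is an isomorphism, so $\mathcal{L}(X)$ is connected (resp.\ irreducible) whenever $X$ is; hence from now on I assume $\dim X\geq 1$.

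For part (1), I would fix a closed point $x_0$ of $X$ and let $W\subseteq\mathcal{L}(X)$ be the union of the closed subsets $\mathcal{L}(C)$, where $C$ ranges over all connected closed subschemes of $X$ of pure dimension one containing $x_0$. Each such $\mathcal{L}(C)$ is connected by Lemma \ref{curve} and contains the constant arc $\tau_{X/k}(x_0)$, so $W$ is connected; the point is then that $W$ is Zariski dense in $\mathcal{L}(X)$. For the density I would use Theorem \ref{approx} and check $\mathcal{L}cu(X)\subseteq W$: a constant arc $\tau_{X/k}(x)$ at a closed point $x$ lies in $\mathcal{L}(D)$ for a connected curve $D$ through $x_0$ and $x$, which exists by Lemma \ref{concurve}; and a point of $\mathcal{L}(C')_{alg}$ with $C'\in Cu(X)$ lies in $\mathcal{L}(C')\subseteq\mathcal{L}\big((C'\cup D)_{red}\big)$ (the inclusion comes from the closed immersion $C'\hookrightarrow (C'\cup D)_{red}$ and the fact that $\mathcal{L}$ preserves closed immersions), where $D$ is a connected curve through $x_0$ and a closed point of $C'$, so that $(C'\cup D)_{red}$ belongs to the family defining $W$. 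Then $\mathcal{L}(X)=\overline{W}$ is connected.

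For part (2), the first observation is that $nSm(X)$ is a finite, hence proper, closed subset of the irreducible scheme $X$, so $Sm(X)\neq\emptyset$ and $V:=\mathcal{L}(X)\setminus\mathcal{L}(nSm(X))$ is nonempty; it is irreducible by Theorem \ref{gen case}(1), and therefore so is $\overline{V}$. The key step is to describe $\mathcal{L}(nSm(X))$ as a subset of $\mathcal{L}(X)$. With its reduced structure, $nSm(X)=\coprod_{i=1}^n\Spec\kappa(x_i)$ over its finitely many points $x_i$, each of which is closed in $X$, so $\kappa(x_i)/k$ is finite. An arc $\Spec F[[t]]\to X$ factoring through $nSm(X)$ factors through some $\Spec\kappa(x_i)$, and by Lemma \ref{algclo} its image lies in $F$; hence it is the constant arc $\tau_{X/k}(x_i)$, which is a closed point of $\mathcal{L}(X)$ since $\tau_{X/k}$ is a closed immersion and $x_i$ is closed. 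Thus $\mathcal{L}(X)=V\sqcup\{\tau_{X/k}(x_1),\dots,\tau_{X/k}(x_n)\}$.

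It then remains to show that each $\tau_{X/k}(x_i)$ lies in $\overline{V}$, which gives $\mathcal{L}(X)=\overline{V}$, irreducible. Since $\{\tau_{X/k}(x_j)\}_j$ is a finite set of closed points of $\mathcal{L}(X)$, this reduces to showing that no $\tau_{X/k}(x_i)$ is an isolated point of $\mathcal{L}(X)$: otherwise an open $U\ni\tau_{X/k}(x_i)$ disjoint from $V$ would be finite, and intersecting $U$ with the open complements of the points $\tau_{X/k}(x_j)$, $j\neq i$, would exhibit $\{\tau_{X/k}(x_i)\}$ as open. To rule this out I would choose an irreducible curve $C_i\subseteq X$ through $x_i$ (by Lemma \ref{concurve}, together with the fact that $X$ is irreducible of dimension $\geq 1$); then $\mathcal{L}(C_i)$ is irreducible by Lemma \ref{curve} and has more than one point, since the section $\tau_{C_i/k}$ embeds the curve $C_i$, which has infinitely many points. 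A closed point of an irreducible topological space with more than one point is never open, so $\{\tau_{X/k}(x_i)\}$ is not open in $\mathcal{L}(C_i)$, and a fortiori not open in $\mathcal{L}(X)$. The final clause follows at once, because over a perfect field $Sm(X)=Reg(X)$, so ``$X$ has only isolated singularities'' means precisely that $nSm(X)$ is finite. I expect the only genuinely nonformal ingredients to be the identification of $\mathcal{L}(nSm(X))$ (via Lemma \ref{algclo}) and the non-isolatedness step; the remainder is bookkeeping with the functor $\mathcal{L}$ and the density result Theorem \ref{approx}.
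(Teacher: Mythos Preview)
Your proof is correct and uses the same toolkit as the paper (Theorem \ref{approx}, Lemma \ref{curve}, Lemma \ref{concurve}, Theorem \ref{gen case}, Corollary \ref{cor-ft}), but the organization differs in both parts.

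For (1), the paper first invokes Corollary \ref{finite} to conclude that $\mathcal{L}(X)$ has only finitely many connected components, hence that these are open; it then takes two such components $U_1,U_2$, picks curves $C_1,C_2$ with $\mathcal{L}(C_i)\subset U_i$ via Theorem \ref{approx}, and links them by a connected curve $D$ from Lemma \ref{concurve} to obtain a contradiction with Lemma \ref{curve}. Your construction of a single connected set $W$ containing $\mathcal{L}cu(X)$, hence Zariski dense, is a genuinely different packaging of the same curve-linking idea and has the advantage of bypassing Corollary \ref{finite} entirely.

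For (2), the paper's argument is shorter: once (1) gives that $\mathcal{L}(X)$ is connected and Corollary \ref{cor-ft} identifies $\mathcal{L}(nSm(X))$ with a finite discrete set of closed points, any $\tau_{X/k}(x_i)\notin\overline{V}$ would be open-and-closed, contradicting connectedness; so $\mathcal{L}(X)=\overline{V}$. Your approach establishes non-isolation of each $\tau_{X/k}(x_i)$ by hand, via an irreducible curve $C_i$ through $x_i$ and Lemma \ref{curve}. This is correct and makes (2) independent of (1), at the cost of a bit more work; the paper's route is slicker precisely because it reuses (1). One small remark: Lemma \ref{concurve} only yields a \emph{connected} curve, so to get an irreducible $C_i$ you should pass to an irreducible component through $x_i$ (or simply note that a connected $\mathcal{L}(C_i)$ with more than one point already suffices for your isolation argument).
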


\begin{proof}
(1) Any connected component is a union of irreducible components.
By Corollary \ref{finite}, $\mathcal{L}(X)$ has finitely many
irreducible components. Therefore, $\mathcal{L}(X)$ has finitely
many connected components, and they are open.

 Assume that $U_1$ and $U_2$ are two
distinct connected components of $\mathcal{L}(X)$. By Theorem
\ref{approx} there exist curves $C_1$ and $C_2$ on $X$ such that
$\mathcal{L}(C_i)$ intersects $U_i$ for $i=1,2$. By Lemma
\ref{curve}, $\mathcal{L}(C_i)$ is contained in $U_i$. Let $x_i$
be a closed point on $C_i$ for $i=1,2$. By Proposition
\ref{concurve} there exists a connected curve $D$ on $X$
containing both $x_1$ and $x_2$. This means that $\mathcal{L}(D)$
intersects $U_1$ and $U_2$, which contradicts the fact that
$\mathcal{L}(D)$ is connected by Lemma \ref{curve}.

(2) By (1), $\mathcal{L}(X)$ is connected, and by Theorem \ref{gen
case}, $\mathcal{L}(X)\setminus\mathcal{L}(nSm(X))$ is
irreducible. Since $\mathcal{L}(nSm(X))$ is topologically a
discrete finite set of points by Corollary \ref{cor-ft}, this
implies that $\mathcal{L}(X)$ is irreducible.
\end{proof}

\subsubsection*{A counterexample over an imperfect field}
\label{sec-count} If $k$ is a perfect field and $X$ a $k$-scheme
of finite type, then $Sing(X)$ and $nSm(X)$ coincide. In this
case, Theorem \ref{gen case} was proven in \cite[2.9]{Reguera}
(the condition that $X$ is reduced seems to be missing in
\cite[2.9]{Reguera}).
 We'll now show that, over an arbitrary field, it is not possible to replace
 $nSm(X)$ by $Sing(X)$ in the statement of Theorem \ref{gen
   case}.

\bigskip

Note that if $k$ has positive characteristic, it is not difficult
to construct examples of irreducible $k$-varieties $X$ such that
$\mathcal{L}(X)$ is not irreducible (see
 \cite[Rmq.\,1]{NiSe1}, or \cite[Exercise IV.6/3.d)]{kolchin}). The following counterexample
 has the additional property that the variety $X$ is regular, and
 hence shows that \cite[2.9]{Reguera} does not extend to
 imperfect base fields.

\begin{theorem}
\label{count} If $k$ is an imperfect field, then there exists a
regular irreducible $k$-variety $X$ such that $\mathcal{L}(X)$ is
not irreducible.
\end{theorem}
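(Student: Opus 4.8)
<br>

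The plan is to exhibit an explicit example. Let $p$ be the characteristic of $k$ and fix $a \in k \setminus k^p$. The natural candidate is a curve: take $X = \Spec k[x,y]/(y^p - a - x^{p-1}y)$ or, even simpler, an affine curve defined by an equation that is regular but not smooth at a point with residue field $k(a^{1/p})$. First I would check that such an $X$ is a regular one-dimensional $k$-variety: regularity at the ``bad'' point is verified by computing that the maximal ideal is generated by a single element in the local ring, using that the residue field extension $k(a^{1/p})/k$ is purely inseparable of degree $p$ (this is the standard way a regular scheme fails to be smooth over an imperfect field). The point is that $nSm(X)$ is then a nonempty finite set of closed points, while $Sing(X) = \emptyset$.

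Next I would show $\mathcal{L}(X)$ is reducible. Here the key input is Theorem \ref{gen case}(2): the irreducible components of $\mathcal{L}(X) \setminus \mathcal{L}(nSm(X))$ correspond to the geometrically reduced irreducible components of $X$. Since $X$ is a regular integral curve that is \emph{not} geometrically reduced (being not smooth at a point, while one-dimensional and reduced forces the failure of geometric reducedness — or one checks directly that $X \times_k k^{alg}$ is non-reduced), $X$ has no geometrically reduced irreducible components, so $\mathcal{L}(X) \setminus \mathcal{L}(nSm(X))$ is \emph{empty}. On the other hand $\mathcal{L}(X)$ is nonempty (it contains the constant arcs $\tau_{X/k}(X)$, and more). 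Therefore $\mathcal{L}(X) = \mathcal{L}(nSm(X))$. By Corollary \ref{cor-ft}, $\mathcal{L}(nSm(X))$ is a topologically discrete finite set of points, so $\mathcal{L}(X)$ is a nonempty finite discrete space. To get reducibility I must ensure this set has at least two points: it suffices that $X$ has at least two distinct closed points (e.g. the non-smooth point together with any $k$-rational point), since constant arcs at distinct points give distinct points of $\mathcal{L}(X)$. Alternatively one can replace $X$ by an affine line's worth of regular points — but then $\mathcal{L}$ of that part is nonempty and not discrete, contradicting $\mathcal{L}(X) = \mathcal{L}(nSm(X))$; so in fact one must be slightly careful and instead take $X$ to be a curve \emph{all} of whose points are non-smooth.

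This last subtlety is the main obstacle, and it forces the right choice of example: I need an irreducible regular $k$-curve $X$ with $nSm(X) = X$ (every point non-smooth) so that $\mathcal{L}(X) = \mathcal{L}(nSm(X))$ is consistent, yet $X$ has at least two points. The canonical such example is $X = \Spec K[u]$ where $K = k(a^{1/p})$ viewed as a $k$-scheme, or more concretely $X = \A^1_{k'}$ for a purely inseparable extension $k'/k$: this is a smooth $k'$-scheme, hence regular, but $X \to \Spec k$ is nowhere smooth since the fibres of $X \times_k k^{alg} \to \Spec k^{alg}$ are non-reduced. Then every point of $X$ lies in $nSm(X)$, so $\mathcal{L}(X) = \mathcal{L}(X/k)$ equals $\mathcal{L}(nSm(X))$, which by Corollary \ref{cor-ft} is discrete; and $X$ has infinitely many closed points, so $\mathcal{L}(X)$ is an infinite discrete scheme, manifestly reducible (it is not even irreducible as it has more than one point, and not connected). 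Writing this up, the steps are: (i) set $k' = k(a^{1/p})$, $X = \A^1_{k'}$ regarded as a $k$-variety; (ii) $X$ is regular and irreducible of dimension one since $\A^1_{k'}$ is; (iii) $X$ is nowhere smooth over $k$ because $k'/k$ is not separable, so $\Omega_{X/k}$ has too-large rank everywhere, i.e. $nSm(X) = X$; (iv) hence $\mathcal{L}(nSm(X)) = \mathcal{L}(X)$, which by Corollary \ref{cor-ft} (applied to the $0$-dimensional... — wait, $X$ is one-dimensional) — more directly, by Theorem \ref{diff}(4b)/Corollary \ref{cor-ft} applied after reducing to the quasi-finite situation, $\mathcal{L}(nSm(X))_{red}$ is discrete; (v) $X$ having more than one point then shows $\mathcal{L}(X)$ is not irreducible. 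I would double-check step (iv): since $nSm(X)$ with its reduced structure is a $k$-scheme of finite type that is quasi-finite... it is actually one-dimensional, so I instead invoke that $X = nSm(X)$ and use Theorem \ref{gen case}(2) directly to conclude $\mathcal{L}(X)\setminus\mathcal{L}(nSm(X)) = \emptyset$ hence $\mathcal{L}(X) = \mathcal{L}(nSm(X))$, and then reducibility follows because $\mathcal{L}(X)$ contains the two distinct constant arcs at two distinct closed points of $X$ together with the fact (Theorem \ref{Qkol} or direct inspection) that these lie in the closure of nothing larger — so $\mathcal{L}(X)$ has at least two irreducible components.
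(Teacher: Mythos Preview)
Your approach cannot work: you are trying to produce a one-dimensional counterexample, but Lemma~\ref{curve} (proved earlier in this very paper) says that for \emph{any} field $k$ and \emph{any} irreducible $k$-curve $C$, the arc scheme $\mathcal{L}(C)$ is irreducible. This is precisely the content of the Remark following Theorem~\ref{count}: a variety witnessing the theorem must have dimension at least $2$. So every variant you propose---a regular curve with an isolated non-smooth point, or $X=\A^1_{k'}$ with $k'=k(a^{1/p})$---is ruled out before you begin.

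Concretely, for $X=\A^1_{k'}$ your step~(iv) collapses. You correctly observe that $nSm(X)=X$, but then Corollary~\ref{cor-ft} says nothing useful: $nSm(X)$ is one-dimensional, not zero-dimensional, so $\mathcal{L}(nSm(X))_{red}$ is not discrete. In fact, since $k'/k$ is algebraic, Corollary~\ref{forget} gives an isomorphism $\mathcal{L}(X/k)_{red}\cong\mathcal{L}(\A^1_{k'}/k')_{red}$, and the right-hand side is an infinite-dimensional affine space over $k'$, hence irreducible. Your closing claim that two constant arcs ``lie in the closure of nothing larger'' is therefore false: both specialize from the generic point of $\mathcal{L}(X)$.

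The paper instead works with the surface $X=V(x^p+yz^p-a)\subset\A^3_k$. Regularity is checked along the non-smooth locus $z=0$, and reducibility of $\mathcal{L}(X)$ is shown by an explicit computation: the equation $F_1=0$ forces the closed set $\pi_0^{-1}(nSm(X))=V(z_0)$ to contain the nonempty open $D(y_1)$, while $V(z_0)\neq\mathcal{L}(X)$. The jump to dimension two is essential, not incidental.
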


%Such a $k$-variety cannot be of dimension one, by Lemma
%\ref{curve}. The family of $k$-varieties that we will construct in
%the proof below generalizes the counter-example given in
%\cite[]{NiSe1}.

\begin{proof}
Denote by $p$ the characteristic of $k$ and choose an element $a$
in $k-k^p$. Consider the polynomial
$$f=x^p+yz^p-a\ \in k[x,y,z]$$
It is clear that $f$ is irreducible. We denote by $X$ the
hypersurface in $\A^3_{k}=\Spec k[x,y,z]$ defined by $f$.

\textit{Claim $1$: $X$ is regular.} We only have to show that each
point of $nSm(X)$ is a regular point of $X$. The closed subscheme
$nSm(X)$ of $X$ is defined by a single equation $z=0$ and it is
regular of dimension one. Hence, any point of $nSm(X)$ is a
regular point of $X$.

\bigskip

\textit{Claim $2$: $\mathcal{L}(X)$ is reducible.} Consider the
polynomial algebra
$$A=k[x_i,y_i,z_i]_{i\geq 0}$$ over $k$.
 Developing the
expression $$f(\sum_{i\geq 0}x_it^i,\sum_{i\geq
0}y_it^i,\sum_{i\geq 0}z_it^i)$$ into a power series over $A$, we
obtain an expression
$$
\sum_{i\geq 0}F_i(x_0,y_0,z_0,\ldots,x_i,y_i,z_i)t^i.
$$
The arc scheme $\mathcal{L}(X)$ is the closed subscheme of the
infinite-dimensional affine space $\Spec A$ defined by the ideal
$I=(F_0,F_1,\ldots)$. Looking at the equation $F_1=0$ one sees
that the closed subset $V(z_0)=\pi^{-1}_0(nSm(X))$ of
$\mathcal{L}(X)$ contains the open subset $D(y_1)$ of
$\mathcal{L}(X)$. The set $D(y_1)$ is non-empty:~it contains the
maximal ideal
$$(x_0^p-a,y_0, z_0, x_1,y_1-1, z_1,x_i,y_i,z_i)_{i\geq
2}$$ of $A/I$.
 Since $V(z_0)\neq
\mathcal{L}(X)$ (it is disjoint from $\tau_{X/k}(Sm(X))$), we can
conclude that $\mathcal{L}(X)$ is reducible.
\end{proof}
\begin{remark}
Note that a variety $X$ as in the statement of Theorem \ref{count}
must have dimension $\geq 2$, by Corollary \ref{cor-ft} and Lemma
\ref{curve}.
\end{remark}
\bibliographystyle{hplain}
\bibliography{wanbib,wanbib2}
\end{document}